\newtheorem{thm}{Theorem}
\newtheorem{cor}[thm]{Corollary}
\newtheorem{lemma}[thm]{Lemma}
\newtheorem{prop}[thm]{Proposition}
\theoremstyle{definition}
\newtheorem{ex}[thm]{Example}
\newtheorem{remark}[thm]{Remark}
\newcommand{\sections}{\renewcommand{\thethm}{\thesection.\arabic{thm}}
           \setcounter{thm}{0}}
\newcommand{\nosubsections}{\renewcommand{\thethm}{\thesection.\arabic{thm}}
           \setcounter{thm}{0}}
\newcommand{\co}{\colon}
\newcommand{\bbF}{\mathbb{F}}
\newcommand{\bbH}{\mathbb{H}}
\newcommand{\bbQ}{\mathbb{Q}}
\newcommand{\bbR}{\mathbb{R}}
\newcommand{\bbZ}{\mathbb{Z}}
\newcommand{\cA}{\mathcal{A}}
\newcommand{\cH}{\mathcal{H}}
\newcommand{\cS}{\mathcal{S}}
\newcommand{\za}{\alpha}
\newcommand{\zd}{\delta}
\newcommand{\ze}{\epsilon}
\newcommand{\zf}{\phi}
\newcommand{\zg}{\gamma}
\newcommand{\zh}{\eta}
\newcommand{\zj}{\psi}
\newcommand{\zl}{\lambda}
\newcommand{\zm}{\mu}
\newcommand{\zp}{\pi}
\newcommand{\zq}{\theta}
\newcommand{\zs}{\sigma}
\newcommand{\zt}{\tau}
\newcommand{\zv}{\varphi}
\newcommand{\zw}{\omega}
\newcommand{\zD}{\Delta}
\newcommand{\zF}{\Phi}
\newcommand{\zG}{\Gamma}
\newcommand{\zJ}{\Psi}
\newcommand{\zL}{\Lambda}
\newcommand{\zQ}{\Theta}
\begin{document}
\title[NET maps]{Modular groups, Hurwitz classes and\\ 
dynamic portraits of NET maps}

\begin{author}{William Floyd}
\address{Department of Mathematics\\ Virginia Tech\\
Blacksburg, VA 24061\\ USA}
\email{floyd@math.vt.edu}
\urladdr{http://www.math.vt.edu/people/floyd}
\end{author}

\begin{author}{Walter Parry}
\email{walter.parry@emich.edu}
\end{author}

\begin{author}{Kevin M. Pilgrim}
\address{Department of Mathematics, Indiana University, Bloomington, 
IN 47405, USA}
\email{pilgrim@indiana.edu}
\end{author}

\date{\today}

\begin{abstract} An orientation-preserving branched covering $f: S^2
\to S^2$ is a \emph{nearly Euclidean Thurston} (NET) map if each
critical point is simple and its postcritical set has exactly four
points.  Inspired by classical, non-dynamical notions such as Hurwitz
equivalence of branched covers of surfaces, we develop invariants for
such maps.  We then apply these notions to the classification and
enumeration of NET maps. As an application, we obtain a complete
classification of the dynamic critical orbit portraits of NET maps.
\end{abstract}

\subjclass[2010]{Primary: 36F10; Secondary: 57M12}

\keywords{Thurston map, virtual endomorphism, branched covering}

\thanks{}
\maketitle

\tableofcontents 

\sections

\section{Introduction }\label{sec:intro}\nosubsections

This paper is part of our program to  investigate nearly
Euclidean Thurston (NET) maps.  A Thurston map is NET if each critical
point is simple and it has exactly four postcritical points. 

The program began with \cite{cfpp}, written with Jim Cannon.  There,
we showed that each NET map $f$ admits a factorization $f=h\circ g$
where $g$ is a Euclidean Thurston map which is affine in suitable
coordinates and $h$ is a homeomorphism. This property motivated the
term nearly Euclidean. Thus NET maps are among the simplest Thurston
maps, and their associated Teichm\"uller spaces have complex dimension
1.  In \cite{cfpp}, we exploited the connection to affine geometry to
give effective algorithms for the computation of fundamental dynamic
invariants, such as the pullback function on homotopy classes of
simple closed curves.

Next, \cite{fpp1} shows that every NET map has a presentation which
allows it to be described by a simple diagram, such as that shown in
Figures \ref{fig:PrenDgm}, \ref{fig:firstprendgm} and
\ref{fig:degfourprendgm}, and gives examples showing how to find such
diagrams in concrete cases.  The data in the diagram can then be
translated into input for a computer program, {\tt NETmap} (see \cite{NETmap}), which
implements the algorithms to compute the invariants.  The NET map
website \cite{NET} contains an overview, papers, many examples, and
executable files for the computer program {\tt NETmap}.

The survey \cite{fkklpps}, written jointly with G. Kelsey, S. Koch,
R. Lodge, and E. Saenz, reports on findings and new phenomena from a
study of the data generated and on connections to other areas.

This paper develops other invariants of NET maps, focusing on
non-dynamical invariants.

We now list these invariants and outline our first main results, in
roughly decreasing order of sensitivity.  Suppose $f, g$ are Thurston
maps with postcritical sets $P_f, P_g$.  In the first four items below, $\phi, \psi$ are orientation-preserving homeomorphisms of the sphere to itself. 

\begin{itemize} 
  \item We recall that $f, g$ are \emph{Thurston equivalent} if
there exist such $\phi, \psi$, with $\phi(P_f)=\psi(P_f)=P_g$, $g \circ \psi =
\phi \circ f$, $\phi|_{P_f}=\psi|_{P_f}$, and $\phi, \psi$ are isotopic relative to $P_f$.  Put
another way, $f$ and $g$ are conjugate up to isotopy relative to their
postcritical sets.  \vskip 0.15in
  \item We say $f, g$ are \emph{Hurwitz equivalent for the pure
modular group} if there exist such $\phi, \psi$ with $\phi(P_f)=\psi(P_f)=P_g$, $g \circ \psi =
\phi \circ f$, and  $\phi|_{P_f}=\psi|_{P_f}$.  The motivation for
the terminology: pre- and/or post-composing $f$ with a homeomorphism
fixing $P_f$ pointwise yields a map $g$ which is equivalent to $f$. In
suitable Euclidean coordinates, the pure modular group is
$\text{PMod}(S^2, P_f) = \overline{\Gamma}(2) < PSL(2,\mathbb{Z})$.
\vskip0.15in
  \item We say $f, g$ are \emph{Hurwitz equivalent for the modular
group} if there exist such $\phi, \psi$ with $\phi(P_f)=\psi(P_f)=P_g$ and $g \circ \psi = \phi
\circ f$.  Theorem \ref{thm:hurwitz} gives a complete algebraic
invariant of the modular group Hurwitz type.  In suitable Euclidean
coordinates, the modular group is $\text{Mod}(S^2, P_f) =
PSL(2,\mathbb{Z}) \ltimes (\mathbb{Z}/2\mathbb{Z})^2$.  \vskip 0.15in
\item We say $f, g$ are \emph{topologically equivalent} if
there exist such $\phi, \psi$ with $g \circ \psi = \phi \circ f$.  
\vskip 0.15in
\item Every NET map $f$ is topologically equivalent to a Euclidean NET
map $g$ induced by a linear map $x \mapsto Ax$, where $A$ is a
diagonal matrix with entries $(m,n)$ and $n|m$. The pair $(m,n)$ are
the \emph{elementary divisors} of $f$. Theorem \ref{thm:eleydivrs}
shows that the elementary divisors form a complete invariant of the
topological equivalence class of a NET map; see also Theorem
\ref{thm:hurwitz_special}.  Hence all NET maps in a given modular
group Hurwitz class have the same elementary divisors.  \vskip 0.15in

\item Most NET maps have four critical values. There are two
\emph{atypical} cases: elementary divisors $(2,1)$ and $(2,2)$. Maps
with these elementary divisors have two and three critical values,
respectively.  In atypical cases, modular group Hurwitz classes may
contain both NET and non-NET maps: the postcritical sets of some maps
may have fewer than four points. In \emph{typical} cases, that is, all
other cases, modular group Hurwitz classes consist entirely of NET
maps.  \vskip 0.15in

\item Given $y \in S^2$,  the collection
$\{ \deg(f,x) : f(x)=y\}$ of local degrees defines a partition of
$\deg(f)$. The \emph{branch data} associated to $f$ is the set of such
partitions as $y$ varies in the set of branch values of $f$.  Section
\ref{sec:bd} gives a classification of the branch data associated to
NET maps.  There are three basic types. In even degrees, in addition
to the obvious Riemann-Hurwitz condition, there is an
\emph{exceptional condition}: for branch data of type 3, the degree
must be divisible by 4 \cite[Thm. 3.8]{pp}.
\end{itemize}

\subsection*{The translation subgroup} The modular group of the
four-times marked sphere $(S^2,P_f)$ contains a distinguished subgroup
isomorphic to $(\mathbb{Z}/2\mathbb{Z})^2$.  In suitable Euclidean
coordinates, its elements are induced by translations.  This group
acts trivially on the Teichm\"uller space modelled on $(S^2, P_f)$.
Suppose $h: (S^2, P_f) \to (S^2, P_f)$ represents an element of this
group and $g=f\circ h$. Then many fundamental invariants of $f$ and
$g$ coincide, though their dynamic portraits may be different.  It
may also happen that $g=f$, i.e. $h$ is a deck transformation of the
covering induced by $f$.  This creates some subtleties; \S 3 gives a
thorough analysis.

\subsection*{Applications}  

Our first application is to the enumeration of representatives of
NET map modular group Hurwitz classes.  The website \cite{NET}
contains data organized first by degree, then by elementary divisors,
and finally by modular group Hurwitz class, giving one
representative for every modular group Hurwitz class through degree
30. Table~\ref{tab:summary} gives the numbers of modular group
Hurwitz classes of NET maps through degree 9.  For every choice
$(m,n)$ of elementary divisors, one modular group Hurwitz class of NET
maps consists of all Euclidean NET maps with elementary divisors
$(m,n)$.

\begin{table}
\begin{center}
\begin{tabular}{r|ccccccccccc}
$d$  &  2  &  3  &  \multicolumn{2}{c}{4}  &  5  &  6  &  7  &  
\multicolumn{2}{c}{8}  &  \multicolumn{2}{c}{9}\\ \hline
$(m,n)$  & $(2,1)$ & $(3,1)$ & $(4,1)$ & $(2,2)$ & $(5,1)$ & $(6,1)$
& $(7,1)$ & $(8,1)$ & $(4,2)$ & $(9,1)$ & $(3,3)$  \\  \hline
$N$ &  3   &  9  &  24 & 8  &  25 & 88 & 47 & 133 & 85 & 120 & 43\\ 
\end{tabular}
\end{center}
\caption{The number $N$ of modular group Hurwitz classes of NET maps 
with elementary divisors $(m,n)$ through degree $d=9$}
\label{tab:summary} \end{table}

We now turn to dynamical applications. Suppose $f$ is a Thurston map
with set of critical points $C_f$ and postcritical set $P_f$. The most
rudimentary dynamical invariant of $f$ is its \emph{dynamic portrait}.
This invariant records the restriction $f: C_f \cup P_f \to P_f$ and
the local degree $\deg(f,x)$ for $x \in C_f \cup P_f$.  The
website \cite{NET} contains data organized by degree and dynamic
portrait, giving one representative for every dynamic portrait through
degree 40.  Table \ref{tab:portraitnum} gives a precise count of the
number of such NET map dynamic portraits in each degree.  Theorem
\ref{thm:portrait} characterizes when a given pair consisting of
elementary divisors $(m,n)$ and a dynamic portrait $\Gamma$ is
realizable by a NET map. We also present algorithms for computing the
correspondence between dynamic portraits of NET maps and NET map
presentations.  We describe these algorithms in
Sections~\ref{sec:maptopic} and \ref{sec:pictomap}.

As an example of how non-dynamical invariants influence dynamical
properties, we present the following.  Below, a Thurston map $f$ is
\emph{B\"ottcher expanding} if away from periodic critical points,
it is uniformly expanding with respect to a natural orbifold metric on
the orbifold associated to $f$.  Any rational Thurston map is
B\"ottcher expanding.

\begin{thm} \label{thm:consists_of_expanding} Suppose $f$ is a
non-Euclidean NET map with elementary divisors $(m,n)$ and
postcritical set $P_f$. If $n>1$ then $f$ is isotopic relative to $P_f$ to
a B\"ottcher expanding map.
\end{thm}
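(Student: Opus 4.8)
My plan is to show that $f$ has no Levy cycle and then to invoke a realization theorem: a Thurston map with no Levy cycle whose orbifold is the Euclidean orbifold $(2,2,2,2)$ or is hyperbolic is isotopic, relative to its postcritical set, to a B\"ottcher expanding map. This should follow from the expanding-Thurston-map theory of Bonk--Meyer and Ha\"issinsky--Pilgrim, together with the ``B\"ottcher'' refinement that is needed to accommodate periodic critical points and the corresponding $\infty$-weights of the orbifold. One first records that the orbifold of any NET map is either $(2,2,2,2)$ or hyperbolic: its weights at the postcritical points are powers of $2$ or $\infty$ because all branching is simple, and raising any weight above $2$ makes the orbifold Euler characteristic negative. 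Thus the realization theorem applies, and everything reduces to verifying that $f$ has no Levy cycle.

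Suppose it did. Then some essential non-peripheral simple closed curve $\gamma\subset S^2\setminus P_f$ is isotopic to a component $\delta$ of $f^{-1}(\gamma')$, for some essential non-peripheral $\gamma'$, with $f|_\delta\colon\delta\to\gamma'$ of degree $1$. Factor $f=h\circ g$ as in \cite{cfpp}, where $g$ is the Euclidean NET map with elementary divisors $(m,n)$ and $h$ is a homeomorphism of $S^2$ fixing $P_f$; since $h$ is a homeomorphism, $\delta$ is a degree-$1$ component of $g^{-1}(\eta)$ for the essential non-peripheral curve $\eta=h^{-1}(\gamma')$, and $\delta$ is itself essential and non-peripheral, being isotopic to $\gamma$. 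Now pass to the affine model of \cite{cfpp}: $(S^2,P_f)$ is the pillowcase $\mathbb{R}^2/(\mathbb{Z}^2\rtimes\{\pm1\})$, $g$ is induced by $x\mapsto Ax$ with $A=\operatorname{diag}(m,n)$, and the torus $\mathbb{R}^2/\mathbb{Z}^2$ double-covers $(S^2,P_f)$ branched over $P_f$. Since $\delta$ and $\eta$ avoid $P_f$, the cover is trivial over each; so they lift to parallel simple closed curves of primitive homology classes $\bar w=(w_1,w_2)$ and $\bar v=(v_1,v_2)$, respectively, and $g$ lifts to $x\mapsto Ax$. Because $f|_\delta$ has degree $1$, a lift of $\delta$ maps homeomorphically onto a lift of $\eta$, so $A\bar w=\pm\bar v$, that is, $\pm\bar v=(mw_1,nw_2)$. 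But when $n>1$ the vector $(mw_1,nw_2)$ is never primitive: any prime $p$ dividing $n$ also divides $m$ (since $n\mid m$), hence divides both coordinates. This contradicts the primitivity of $\bar v$, so $f$ has no Levy cycle.

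The number-theoretic step is elementary; the real work is in the two appeals. The principal obstacle will be to pin down the realization result in precisely the form needed: ``no Levy cycle $\Rightarrow$ isotopic to an expanding Thurston map'' is usually stated under the hypothesis that $f$ has no periodic critical point, so one must make sure it upgrades to ``isotopic to a B\"ottcher expanding map'' for NET maps that do possess periodic critical points, where the orbifold has $\infty$-weights and the natural metric degenerates to cylinder ends there. A secondary point requiring care is the reduction from $f$ to $g$ and the passage to the affine model---in particular the behaviour of the three combinatorial types of pillowcase curves under the double cover and under $g$---though this is routine given \cite{cfpp}. As a consistency check, when $n=1$ the argument breaks down exactly for the curve of homology class $(0,1)$, which pulls back to parallel copies of itself by degree $1$ and so is a Levy cycle; and the Euclidean case is excluded because a Euclidean NET map with $n>1$ is directly seen to be expanding for the flat orbifold metric.
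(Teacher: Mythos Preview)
Your two-step strategy---show $f$ is Levy-free, then invoke a realization theorem---is exactly the paper's. The paper's proof is two sentences: Theorem~4.1 of \cite{cfpp} gives that the mapping degree of $f$ on any essential preimage component is divisible by $n$, hence $f$ is Levy-free; then \cite[Theorem~C]{bd0} (Bartholdi--Dudko) supplies the B\"ottcher expanding representative.

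There is a genuine, though repairable, gap in your Levy-free argument. The factorization $f=h\circ g$ from \cite{cfpp} does \emph{not} have $h$ fixing $P_f$; rather $h(P_1)=P_f$, where $P_1$ is the four-point set of non-critical $f$-preimages of $P_f$. Consequently $\eta=h^{-1}(\gamma')$ lies in $S^2\setminus P_1$, not $S^2\setminus P_f$, and your single torus $T=\mathbb{R}^2/\mathbb{Z}^2$ double-covering $(S^2,P_f)$ is the wrong object. Moreover, the Euclidean factor $g$ in the \cite{cfpp} factorization is induced by an affine map whose linear part has elementary divisors $(m,n)$ but is not in general diagonal; and one cannot simply replace it by the diagonal map, since $f=h\circ g_{\mathrm{diag}}$ would force $f$ and $g_{\mathrm{diag}}$ to share critical points. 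The fix is either to cite \cite[Theorem~4.1]{cfpp} directly, or to run your homology argument with the two tori $T_1=\mathbb{R}^2/2\Lambda_1$ and $T_2=\mathbb{R}^2/2\Lambda_2$ and the lift $\widehat f\colon T_1\to T_2$ induced by the identity on $\mathbb{R}^2$: one checks $\delta$ avoids $P_1$ (since $P_1\subset f^{-1}(P_f)$), so $\delta$ lifts to $T_1$; then $\widehat f_*$ is the inclusion $2\Lambda_1\hookrightarrow 2\Lambda_2$, and since $n\mid m$ every element of $2\Lambda_1$ is divisible by $n$ in $2\Lambda_2$, contradicting primitivity of the lift of $\gamma$ when $n>1$.

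For the realization step you correctly flag the difficulty. The precise reference is Bartholdi--Dudko \cite[Theorem~C]{bd0}: a Thurston map is isotopic rel $P_f$ to a B\"ottcher expanding map if and only if it is Levy-free. This handles periodic critical points directly; the Bonk--Meyer and Ha\"issinsky--Pilgrim results you mention require the absence of periodic critical points and so do not suffice here without the refinement you allude to.
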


The assumption that $f$ is not Euclidean is necessary: if $f$ is the
map induced by the matrix $A=2 \cdot \left[\begin{smallmatrix}2 & 1 \\
1 & 1\end{smallmatrix}\right]$, then $n=2$ while $A$ has a positive
real eigenvalue which is strictly less than one; this is an obstruction
to expansion in the isotopy class. There are plenty of B\"ottcher
expanding NET maps with $n=1$; among them are many rational maps too,
e.g. the Douady rabbit polynomial. So the condition is far from sharp.

\begin{proof} Recall that $f$ is \emph{Levy-free} if there is no
simple closed curve $\zg\subseteq S^2-P_f$ which is essential, not
peripheral and having a lift $\zd$ homotopic to $\zg$ which $f$ maps
to $\zg$ with degree 1.  Using Theorem 4.1 of \cite{cfpp}, we can see
that the degree with which $f$ maps $\zd$ to $\zg$ is divisible by
$n>1$.  So $f$ is Levy-free. By \cite[Theorem C]{bd0}, $f$ is
isotopic to a B\"ottcher expanding map.
\end{proof}

If a modular group Hurwitz class $\cH$ contains a non-Euclidean NET
map, then every map in $\cH$ is non-Euclidean and we call $\cH$
\emph{non-Euclidean}. Any two maps in $\cH$ have the same elementary
divisors.

\begin{cor} \label{cor:consists_of_expanding} Suppose $\cH$ is a
non-Euclidean modular group Hurwitz class of NET maps and the
elementary divisor $n$ is strictly larger than one. Then every isotopy
class in $\cH$ contains a B\"ottcher expanding representative.
\end{cor}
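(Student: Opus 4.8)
The plan is to deduce the corollary from Theorem~\ref{thm:consists_of_expanding} by applying that theorem to one representative at a time, so the work is entirely in checking that the hypotheses on $\cH$ propagate to every NET map it contains. First I would record that since $\cH$ is non-Euclidean, the discussion immediately preceding the corollary shows every map in $\cH$ is a non-Euclidean NET map, and, since modular group Hurwitz equivalent maps are in particular topologically equivalent, Theorem~\ref{thm:eleydivrs} (or the remark just before the corollary) shows that any two of them share the same pair of elementary divisors. Under the hypothesis of the corollary this common pair has the form $(m,n)$ with $n>1$. Hence every NET map representing an isotopy class in $\cH$ satisfies exactly the hypotheses of Theorem~\ref{thm:consists_of_expanding}.

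Next I would run the one-line argument. Fix an isotopy class $\mathcal{C}$ in $\cH$ and choose a NET map $f\in\mathcal{C}$, with postcritical set $P_f$. By the previous step $f$ is a non-Euclidean NET map with elementary divisor $n>1$, so Theorem~\ref{thm:consists_of_expanding} produces a B\"ottcher expanding map $g$ that is isotopic to $f$ relative to $P_f$. Since $g$ is isotopic to $f$ rel $P_f$, it lies in the same isotopy class $\mathcal{C}$, and it is the desired B\"ottcher expanding representative. As $\mathcal{C}$ was an arbitrary isotopy class in $\cH$, this proves the corollary.

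The proof is really just unwinding definitions, and the only step requiring any care — so the closest thing to an obstacle — is the propagation claim of the first step: that ``non-Euclidean'' is a property of an entire modular group Hurwitz class, and that the elementary divisors, in particular the integer $n$, are invariant along such a class. Both facts are supplied by the text preceding the statement together with Theorem~\ref{thm:eleydivrs}, so no new argument is needed. One further subtlety worth a sentence: in the atypical case of elementary divisors $(2,2)$ a modular group Hurwitz class may contain non-NET maps, with fewer than four postcritical points, and these lie outside the scope of Theorem~\ref{thm:consists_of_expanding}; accordingly the statement (and the argument above) is understood to concern the isotopy classes of NET maps in $\cH$, to which the theorem applies verbatim.
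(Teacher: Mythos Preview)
Your proof is correct and follows exactly the approach the paper intends: the corollary is stated without an explicit proof because the sentence immediately preceding it already records that both the property of being non-Euclidean and the elementary divisors are constant across a modular group Hurwitz class, so Theorem~\ref{thm:consists_of_expanding} applies to every representative. Your added remark about the atypical case $(2,2)$ is a reasonable clarification but not needed for the argument.
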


As another application, we give an effective algorithm for calculating
a basic dynamical invariant.  Suppose $f: (S^2, P_f)
\to (S^2, P_f)$ is a NET map with postcritical set $P_f$. If a
homeomorphism $\psi$ represents an element of $\text{PMod}(S^2, P_f)$
and lifts under $f$ to a map $\widetilde{\psi}$ which again represents
an element of $\text{PMod}(S^2, P_f)$, the assignment $\psi \mapsto
\widetilde{\psi}$ induces the \emph{virtual endomorphism on the pure
mapping class group} associated to $f$.  

For generic Thurston maps,
this is difficult to compute, even for maps with $\#P_f=4$.  One method is the following. As shown by S. Koch \cite{Ko}, a Thurston map $f$ has an associated algebraic correspondence $X, Y: \mathcal{W} \to \mathcal{M}$ on the moduli space $\mathcal{M}$ of conformal configurations of embeddings of $P_f$ into the Riemann sphere.  Here, $Y$ is a finite unramified covering map, and $X$ is holomorphic. If $\#P_f=4$, in suitable coordinates, $\mathcal{M}=\mathbb{C}-\{0,1\}$, $\text{PMod}(S^2, P_f)$ is identified with the fundamental group of $\mathcal{M}$, $\mathcal{W}$ is a compact Riemann surface minus a finite set of points and is given as a locus $\mathcal{W}:=\{P(x,y)=0 : x, y \in \mathcal{M}\}\subset \mathbb{C}^2$, and the maps $X, Y$ are the coordinate projections.   
In the special case when $f$ is rational (for ease of exposition here), there is a distinguished ``fixed'' point $w_0 \in \mathcal{W}$ with $m_0:=X(w_0)=Y(w_0)$, and the virtual endomorphism is then the induced map on fundamental group $X_*\circ Y_*^{-1}$.  In cases with few critical points, such as those considered in \cite{BN} and \cite{L}, the defining equation $P(x,y)=0$ for the correspondence can be found explicitly, and from this the virtual endomorphism may be computed.  For maps with many critical points, this quickly becomes computationally intractable.  

We consider a slightly more general notion, and calculate it using different methods.  We relax the condition that the homeomorphisms fix $P$ pointwise to merely fixing $P$ setwise, and we obtain an associated
virtual multi-endomorphism: the lift $\widetilde{\psi}$ might not be
well-defined, due to the possible presence of deck transformations.
Theorem \ref{thm:viredmp} gives a method for calculating $\psi
\dashrightarrow \widetilde{\psi}$ from knowledge of the \emph{slope
function} $\mu_f: \overline{\mathbb{Q}}
\dasharrow \overline{\mathbb{Q}}$; see \S \ref{sec:background}.  This method is then
implemented in the {\tt NETmap} program.  Our method relies on two
ingredients.  First, there is a simple finite linear-algebraic
condition for an element $\psi\in\text{Mod}(S^2, P_f)$ to be liftable
under $f$, i.e. to lie in the domain of the virtual
multi-endomorphism (Lemma \ref{lemma:when_liftable}). To
calculate its image $\widetilde{\psi}$, we exploit two facts: (i) the
linear part of an element of $\text{Mod}(S^2, P_f)$ is uniquely
determined by its value at a pair of distinct extended rationals $p/q,
r/s \in \mathbb{Q}\cup \{1/0\}=:\overline{\mathbb{Q}}$, and (ii) the
slope function $\mu_f$ is algorithmically computable
\cite{cfpp}.  Section \ref{sec:exvme} illustrates this in a
complicated example.

As a further application, we obtain information about the geometry of the correspondence $X, Y: \mathcal{W} \to \mathcal{M}$.  The {\tt
NETmap} program uses classical geometric methods to find explicit matrix generators
for the domain of the virtual endomorphism (equivalently, for the Fuchsian group that uniformizes $\mathcal{W}$), and their images under the virtual endomorphism.  The lifting of complex structures under $f$ defines 
an analytic self-map $\sigma_f: \mathbb{H} \to
\mathbb{H}$ on the Teichm\"uller space modelled on $(S^2, P_f)$ that covers the correspondence $X\circ Y^{-1}$. The {\tt NETmap} program translates algebraic information about the virtual endomorphism into a rather complete geometric 
description about $\sigma_f$; cf. \cite{cfpp}.  The data on the website tabulates this geometric information.

\subsection*{Outline} Section 2 collects concepts and facts used
throughout the paper. Sections 3 and 4 discuss various modular groups
and related actions. Sections 5 and 6 introduce Hurwitz classes,
Hurwitz invariants and elementary divisors. Section 7 illustrates the calculation of the virtual endomorphism in a concrete complicated example. Section 8 introduces
branch data, used in section 9 to classify dynamic portraits. Sections
10 and 11 give algorithms for constructing a portrait from a map and
for constructing maps with a given portrait, respectively.

\subsection*{Acknowledgements}
The authors gratefully acknowledge support from the American Institute
for Mathematics.  Kevin Pilgrim was also supported by Simons grant \#245269.

\section{Background}\label{sec:background}\nosubsections In this
section we summarize some background material from \cite{cfpp} which
will be used repeatedly.  Let $f\co S^2\to S^2$ be a NET map.  There
exist a lattice $\zL_i$ in $\bbR^2$ and branched covering maps $q_i\co
\bbR^2\to \bbR^2/2\zL_i$, $p_i\co \bbR^2/2\zL_i\to S^2$ and
$\zp_i=p_i\circ q_i$ for $i\in \{1,2\}$ so that $f\circ \zp_1=\zp_2$,
as in Figure 1 in Section 1 of \cite{cfpp}.  We may always, and
usually do, assume $\Lambda_2 = \mathbb{Z}^2$.  The postcritical set
of $f$ is $P_2=\zp_2(\zL_2)$.  Statement 2 of Lemma 1.3 of \cite{cfpp}
states that $f^{-1}(P_2)$ contains exactly four points which are not
critical points of $f$.  This set of four points is
$P_1=\zp_1(\zL_1)$.  The covering $\zp_i$ is normal with group of deck
transformations equal to the group $\zG_i$ of all Euclidean isometries
of the form $x\mapsto 2\zl\pm x$ for $\zl\in \zL_i$ and $i\in
\{1,2\}$.  We will often use the finite Abelian group
$\cA:=\zL_2/2\zL_1$.

We recall Lemmas 2.1 and 2.2 stated in \cite{fpp1}.

\begin{lemma}\label{lemma:inducedown} Let $\zL$ and $\zL'$ be lattices
in $\mathbb{R}^2$.  Let $\zG$, respectively $\zG'$, be the groups of
Euclidean isometries of the form $x\mapsto 2\zl\pm x$ for some $\zl\in
\zL$, respectively $\zl\in \zL'$.  Also let $\zp\co \mathbb{R}^2\to
\mathbb{R}^2/\zG$ and $\zp'\co \mathbb{R}^2\to \mathbb{R}^2/\zG'$ be
the canonical quotient maps.  Let $\zF\co \mathbb{R}^2\to
\mathbb{R}^2$ be an affine isomorphism such that $\zF(\zL)\subseteq
\zL'$.  Then $\zF$ induces a branched covering map $\zf\co
\mathbb{R}^2/\zG\to \mathbb{R}^2/\zG'$ such that $\zf\circ
\zp=\zp'\circ \zF$.  The map $\zF$ preserves orientation if and only
if $\zf$ preserves orientation.  The set $\zF(\zL)$ is a coset of a
sublattice $\zL''$ of $\zL'$, and the degree of $\zf$ equals the index
$[\zL':\zL'']$.
\end{lemma}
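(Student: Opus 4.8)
The plan is to write the affine isomorphism as $\zF(x)=Ax+b$ with $A$ an invertible linear map and $b\in\mathbb{R}^2$, and to first unpack the hypothesis $\zF(\zL)\subseteq\zL'$: taking $\lambda=0$ gives $b=\zF(0)\in\zL'$, and then $A\lambda=\zF(\lambda)-b\in\zL'$ for every $\lambda\in\zL$, so $A\zL\subseteq\zL'$. Set $\zL'':=A\zL$; since $A$ is invertible this is a finite-index sublattice of $\zL'$, and $\zF(\zL)=b+A\zL$ is exactly the coset $b+\zL''$, which lies in $\zL'$ because $b\in\zL'$. This establishes the assertion that $\zF(\zL)$ is a coset of a sublattice $\zL''\leq\zL'$.

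Next I would check that $\zF$ is equivariant over a homomorphism $\zG\to\zG'$. For $\zg\in\zG$ of the form $x\mapsto 2\lambda+x$ one computes $\zF\circ\zg\circ\zF^{-1}\co y\mapsto y+2A\lambda$, which lies in $\zG'$ since $A\lambda\in\zL''\subseteq\zL'$; for $\zg\co x\mapsto 2\lambda-x$ one gets $\zF\circ\zg\circ\zF^{-1}\co y\mapsto 2\zF(\lambda)-y$, again in $\zG'$ since $\zF(\lambda)\in\zL'$. Hence $\zg\mapsto\zF\zg\zF^{-1}$ is a homomorphism $\zG\to\zG'$ (injective, as $\zF$ is a bijection), so $\zF$ carries each $\zG$-orbit into a $\zG'$-orbit and descends to a map $\zf$ of quotient spaces with $\zf\circ\zp=\zp'\circ\zF$; since $\zp$ is a quotient map and $\zp'\circ\zF$ is continuous, $\zf$ is continuous. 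Moreover every element of $\zG$ and of $\zG'$ has linear part $\pm I$ and so preserves orientation, so $\mathbb{R}^2/\zG$ and $\mathbb{R}^2/\zG'$ carry orientations for which $\zp$ and $\zp'$ are locally orientation-preserving; as $\zf$ is a local homeomorphism off a finite set and near those points agrees with $\zp'\circ\zF\circ\zp^{-1}$, it preserves orientation if and only if $\zF$ does.

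For the branched-covering claim and the degree I would factor through the intermediate tori. Writing $q\co\mathbb{R}^2\to\mathbb{R}^2/2\zL$ and $q'\co\mathbb{R}^2\to\mathbb{R}^2/2\zL'$, the identity $\zF(x+2\lambda)=\zF(x)+2A\lambda$ shows that $\zF$ descends to a covering map of tori $\overline{\zF}\co\mathbb{R}^2/2\zL\to\mathbb{R}^2/2\zL'$ of degree $[2\zL':2A\zL]=[\zL':\zL'']$. Let $\rho\co\mathbb{R}^2/2\zL\to\mathbb{R}^2/\zG$ and $\rho'\co\mathbb{R}^2/2\zL'\to\mathbb{R}^2/\zG'$ be the canonical degree-two quotient maps, branched over the four two-torsion points; then $\zf\circ\rho=\rho'\circ\overline{\zF}$. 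The right-hand side is a branched covering of compact surfaces, and since $\rho$ is a surjective open map, $\zf$ inherits continuity, openness, properness, and finiteness of fibers, so $\zf$ is itself a branched covering. Multiplicativity of degree then gives $2\deg(\zf)=\deg(\rho')\deg(\overline{\zF})=2[\zL':\zL'']$, whence $\deg(\zf)=[\zL':\zL'']$.

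The step requiring the most care is the passage from ``$\zf\circ\rho$ is a branched covering and $\rho$ is one'' to ``$\zf$ is a branched covering'': this uses the local $z\mapsto z^{k}$ normal form for open discrete maps between surfaces, and one should remember that $\zf$ may be branched at the additional points of $\zp\bigl(A^{-1}\zL'\bigr)$ rather than only at the four cone points of $\mathbb{R}^2/\zG$ — harmless for the degree count but easy to overlook. Everything else is routine bookkeeping with affine maps and lattice indices.
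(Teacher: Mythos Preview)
The paper does not actually prove this lemma; it merely recalls the statement from \cite{fpp1} (``We recall Lemmas 2.1 and 2.2 stated in \cite{fpp1}''), so there is no in-paper argument to compare your proposal against.

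Your proof is correct and is the natural one. The equivariance check $\zF\zg\zF^{-1}\in\zG'$, the factorization through the intermediate tori $\mathbb{R}^2/2\zL\to\mathbb{R}^2/2\zL'$, and the degree count via multiplicativity are all accurate. One small sharpening of your final caveat: in fact $\zf$ is \emph{unbranched} at the four cone points $\zp(\zL)$ of $\mathbb{R}^2/\zG$, because each such point maps to a cone point $\zp'(\zF(\lambda))\in\zp'(\zL')$ of the target and the order-two branching of $\rho$ and $\rho'$ cancels; the genuine branch points of $\zf$, each of local degree $2$, are exactly the points of $\zp\bigl(A^{-1}\zL'\setminus\zL\bigr)$. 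This refines but does not affect the validity of your argument.
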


\begin{lemma}\label{lemma:induceup} Let $\zL$ and $\zL'$ be lattices
in $\mathbb{R}^2$.  Let $\zG$, respectively $\zG'$, be the group of
Euclidean isometries of the form $x\mapsto 2\zl\pm x$ for some $\zl\in
\zL$, respectively $\zL'$.  Also let $\zp\co \mathbb{R}^2\to
\mathbb{R}^2/\zG$ and $\zp'\co \mathbb{R}^2\to \mathbb{R}^2/\zG'$ be
the canonical quotient maps.  Let $\zf\co \mathbb{R}^2/\zG\to
\mathbb{R}^2/\zG'$ be a branched covering map such that
$\zf(\zp(\zL))\subseteq \zp'(\zL')$.
Then we have the following three statements.
\begin{enumerate}
  \item There exists a homeomorphism $\zF\co \mathbb{R}^2\to
\mathbb{R}^2$ such that the restriction of $\zF$ to $\zL$ is affine
and $\zf\circ \zp=\zp'\circ\zF$.  If $\zp'(0)\in \zf(\zp(\zL))$, then
$\zF(\zL)$ is a sublattice of $\zL'$.
  \item There exists an affine isomorphism $\zJ\co \mathbb{R}^2\to
\mathbb{R}^2$ such that the branched map of
Lemma~\ref{lemma:inducedown} which $\zJ$ induces from
$\mathbb{R}^2/\zG$ to $\mathbb{R}^2/\zG'$ is $\zf$ up to isotopy rel
$\zp(\zL)$.  If $\zp'(0)\in \zf(\zp(\zL))$, then $\zJ(\zL)$ is a
sublattice of $\zL'$.
  \item The maps $\zF$ and $\zJ$ are unique up to precomposing with an
element of $\zG$.  They are also unique up to postcomposing with an
element of $\zG'$.
\end{enumerate}
\end{lemma}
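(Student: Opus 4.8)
The plan is to lift $\zf$ through the covering $\zp'$ and then to recognize, from the equivariance forced by the two deck groups, that the lift is affine on the lattice $\zL$. For statement~(1), put $g:=\zf\circ\zp\co\bbR^2\to\bbR^2/\zG'$. In the situation at hand $\zf$ is unramified at the cone points $\zp(\zL)$ and ramified only over $\zp'(\zL')$ — this is where one uses the hypothesis together with statement~2 of Lemma~1.3 of \cite{cfpp}, and it amounts to saying that $\zf$ is an orbifold covering of the Euclidean $(2,2,2,2)$-orbifolds $\bbR^2/\zG$ and $\bbR^2/\zG'$ — so $g$ is ramified only over $\zp'(\zL')$. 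Deleting $g^{-1}(\zp'(\zL'))$ turns $g$ into an honest covering onto $(\bbR^2/\zG')\setminus\zp'(\zL')$, over which $\zp'$ is also a covering, and since $\bbR^2$ is simply connected the lifting criterion holds: there is a continuous $\zF\co\bbR^2\to\bbR^2$ with $\zp'\circ\zF=g$. As $\zF$ is an unramified branched covering of $\bbR^2$, it is a homeomorphism. Next, for each $\zg\in\zG$ the map $\zF\circ\zg$ also lifts $g$ (because $\zp\circ\zg=\zp$), so $\zF\circ\zg=\rho(\zg)\circ\zF$ for a unique $\rho(\zg)\in\zG'$, and $\rho\co\zG\to\zG'$ is an injective homomorphism. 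Since $\rho$ carries the translation subgroup $2\zL$ to infinite-order elements of $\zG'$, that is, to translations, it sends translation by $v$ to translation by $Mv$ for an injective linear map $M\co 2\zL\to 2\zL'$ that extends to a linear automorphism of $\bbR^2$. Writing $\iota_\zl\co x\mapsto 2\zl-x$ and fixing $\zl_0\in\zL$, one has $\iota_\zl=(\text{translation by }2(\zl-\zl_0))\circ\iota_{\zl_0}$; applying $\rho$ and taking the unique fixed point of the involution $\rho(\iota_\zl)$ yields $\zF(\zl)=M\zl+c$ for all $\zl\in\zL$, with $c=\zF(\zl_0)-M\zl_0$. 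Thus $\zF|_\zL$ is the restriction of an affine isomorphism, $\zF(\zL)=M\zL+c$ is a coset of the sublattice $M\zL\le\zL'$, and if $\zp'(0)\in\zf(\zp(\zL))$ then, after postcomposing $\zF$ with a suitable element of $\zG'$, we may assume $0\in\zF(\zL)$, so that $M\zL+c$ equals the sublattice $M\zL$.

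For statement~(2), let $\zJ\co x\mapsto Mx+c$ be the affine isomorphism just extracted. As $\zJ(\zL)=\zF(\zL)\subseteq\zL'$, Lemma~\ref{lemma:inducedown} produces a branched covering $\zf_0\co\bbR^2/\zG\to\bbR^2/\zG'$ with $\zf_0\circ\zp=\zp'\circ\zJ$, together with the coset and degree assertions, the sublattice clause being handled as in~(1). It remains to isotope $\zf_0$ to $\zf$ rel $\zp(\zL)$. A short computation shows $\zJ$ is itself $\rho$-equivariant — its action on $2\zL$ and on $\iota_{\zl_0}$ is forced by $M$ and by $\zJ(\zl_0)=\zF(\zl_0)$, which agree with those of $\zF$ — so $\zf$ and $\zf_0$ are branched coverings of the same degree and ramification data inducing the same homomorphism $\rho$ on orbifold fundamental groups. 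The residual homeomorphism $H:=\zF\circ\zJ^{-1}$ of $\bbR^2$ then commutes with $\rho(\zG)$, fixes the lattice coset $\zF(\zL)$ pointwise, and (since $\zF=M+(\text{bounded})$ while $\zJ=M+c$) is a bounded distance from the identity; isotoping $H$ to the identity through homeomorphisms retaining these properties and pushing the isotopy down along $\zp$ produces the desired isotopy $\zf_0\simeq\zf$ rel $\zp(\zL)$. Finally, statement~(3) follows from the usual uniqueness of lifts through the normal covering $\zp'$ (the ambiguity being postcomposition with an element of the deck group $\zG'$, while precomposition with $\zG$ yields further valid lifts since $\zp$ is $\zG$-invariant), together with the rigidity of affine maps for $\zJ$: its linear part is pinned down by $\rho|_{2\zL}$ and its translation part modulo $2\zL'$.

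I expect the main obstacle to be the last isotopy in~(2). Matching the orbifold-$\pi_1$ data and the ramification data does not by itself force $\zf$ and $\zf_0$ to be isotopic rel $\zp(\zL)$: point-pushing homeomorphisms of the four-punctured sphere lift to homeomorphisms of $\bbR^2$ that commute with the deck group and lie a bounded distance from the identity, so one must genuinely use the extra structure — that $H$ fixes \emph{all} of $\zF(\zL)$ pointwise, and that $\zf,\zf_0$ are built from affine maps with simple ramification so that $H$ also carries the ramification locus of $\zf_0$ onto that of $\zf$ compatibly — to eliminate the residual twisting. The route I would take is to pass to the two-fold torus cover of $\bbR^2/\rho(\zG)$, reduce to a homeomorphism of a torus that fixes a finite marked set, acts trivially on homology, and is controlled at infinity, and then rule out point-pushing via the compatibility of ramification loci; this is the technical heart of the proof and is in effect the content of the relevant construction in \cite{fpp1}. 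A subsidiary point needing care at the outset is the assertion, used to make the lift $\zF$ a homeomorphism, that $\zf$ is unramified at $\zp(\zL)$ and ramified only over $\zp'(\zL')$.
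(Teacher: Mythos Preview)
This paper does not prove the lemma; it is merely recalled from the companion paper \cite{fpp1}, so there is no argument here against which to compare yours.

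Your outline is the standard one and is correct in spirit: lift $\zf\circ\zp$ through $\zp'$, extract a homomorphism $\rho\co\zG\to\zG'$ from uniqueness of lifts, and read off the affine map on $\zL$ from the fixed points of the order-two rotations $\rho(\iota_\zl)$. Statement~(3) is exactly as you say.

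There is one genuine gap. You claim that $\zf$ is unramified at $\zp(\zL)$ and ramified only over $\zp'(\zL')$, citing statement~2 of Lemma~1.3 of \cite{cfpp}. But that result is about NET maps, not about an arbitrary branched covering satisfying only $\zf(\zp(\zL))\subseteq\zp'(\zL')$; the hypothesis of the present lemma does not by itself force this ramification pattern. Without it the lift $\zF$ need not be a homeomorphism: if $\zf$ were critical at some $p\in\zp(\zL)$ with $\zf(p)\in\zp'(\zL')$, then $\zf\circ\zp$ would have local degree at least $4$ at the preimages of $p$ in $\zL$, while $\zp'$ has local degree $2$ at $\zL'$, so $\zF$ would itself be branched there. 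In every application of the lemma in this paper $\zf$ is a homeomorphism, so the issue never arises in practice; but as a proof of the lemma \emph{as stated here}, your appeal to \cite{cfpp} does not close the gap. You flag this at the end as ``a subsidiary point needing care''; it is more than that---it is effectively a missing hypothesis that the version in \cite{fpp1} presumably supplies, and it cannot be derived from what is written.

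Your identification of the isotopy in~(2) as the technical heart, and the route via the torus cover, is accurate; that is indeed where the substance of the argument in \cite{fpp1} lies.
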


We conclude this section by quickly recalling some dynamical
invariants defined by pullback.

Suppose $f$ is a NET map with postcritical set $P_f$.  The set of
(unoriented, simple, essential, non-peripheral, closed, up to free
homotopy) curves on $S^2-P_f$ is naturally identified with
$\overline{\mathbb{Q}}:=\mathbb{Q}\cup \{\frac{1}{0}\}$ via slopes of
representing geodesics with respect to the Euclidean metric induced
via pushforward under $\pi_2$. Pulling back via $f$ induces the
\emph{slope function} $\mu_f: \overline{\mathbb{Q}} \to
\overline{\mathbb{Q}}\cup\{\odot\}$,  where
$\odot$ is a symbol representing the set of inessential and peripheral
curves. \cite[\S 5]{cfpp} gives an algorithm for calculating $\mu_f$,
implemented in the program {\tt NETmap}.

Let $\zg$ be a simple closed curve in $S^2-P_f$ with slope $s\in
\overline{\bbQ}$.  The \emph{multiplier} of both $\zg$ and $s$ is defined as a fraction
$\frac{c}{d}$.  The numerator $c$ is the number of connected
components of $f^{-1}(\zg)$ which are neither inessential nor
peripheral in $S^2-P_f$.  Theorem 4.1 of \cite{cfpp} shows that the
local degrees of $f$ on these connected components are equal.  This
common local degree is $d$.

The set of marked complex structures on $(S^2, P_f)$ is naturally
identified with the upper half-plane $\mathbb{H} \subset \mathbb{C}$.
We briefly recall this identification. To $\tau \in \mathbb{H}$, one
associates the lattice $\Lambda_\tau=\mathbb{Z}+\tau\mathbb{Z}$, the
group $\Gamma_\tau:=\{x \mapsto 2\lambda \pm x, \lambda \in
\Lambda_\tau\}$, and the quotient $X_\tau:=\mathbb{C}/\Gamma_\tau$
marked at the four points $\Lambda_\tau/\Gamma_\tau$ and equipped with
special generators for its orbifold fundamental group in the obvious
way. Conversely, such a quotient equipped with such generators yields
a lattice in $\mathbb{C}$ equipped with an ordered basis $(\omega_1,
\omega_2)$ with $\Im(\omega_2/\omega_1)>0$, and this yields an element
$\tau:=\omega_2/\omega_1 \in \mathbb{H}$.  

Pulling back complex structures on $(S^2, P_f)$ under $f$ induces an
analytic self-map $\sigma_f: \mathbb{H} \to \mathbb{H}$.  The map
$\sigma_f$ extends continuously to the Weil-Petersson boundary
$\overline{\mathbb{Q}}$ so that $\mu_f(p/q)=p'/q'$ if and only if
$\sigma_f(-q/p)=-q'/p'$.

\section{Modular group actions}\label{sec:mdrgp}\nosubsections

This section deals with actions of modular groups on isotopy classes
of NET maps.  We formally present results only for the modular group.
At the end of the section we discuss the analogs for the pure modular
group and for the extended modular group, which allows for reversal of
orientation.  Let $f\co S^2\to S^2$ be a NET map.  This will be fixed
for all of this section.  As in Section~\ref{sec:background}, let
$\zp_1,\zp_2\co \bbR^2\to S^2$ be branched covering maps so that $f\circ
\zp_1=\zp_2$.  The set $P=P_2$ of branch values of
$\zp_2$ in $S^2$ is the postcritical set of $f$.  Let $\zL_i$
be the lattice of branch points of $\zp_i$ in $\mathbb{R}^2$
for $i\in \{1,2\}$.

It is always possible to take the lattice $\zL_2$ to be
$\mathbb{Z}^2$.  We do so for simplicity.  We define the special
affine group $\text{SAff}(2,\mathbb{Z})$ to be the group of all
orientation-preserving affine isomorphisms $\zJ\co \mathbb{R}^2\to
\mathbb{R}^2$ such that $\zJ(\mathbb{Z}^2)=\mathbb{Z}^2$.  So
$\text{SAff}(2,\mathbb{Z})$ is the set of all maps $x\mapsto Ax+b$,
where $A\in \text{SL}(2,\mathbb{Z})$, $x$ is a column vector in
$\mathbb{R}^2$ and $b$ is a column vector in $\mathbb{Z}^2$.  As
usual, let $\zG_2$ be the group of all Euclidean isometries of the
form $x\mapsto 2\zl\pm x$ for some $\zl\in \mathbb{Z}^2$.  Let $G$ be
the modular group of the pair $(S^2,P)$.  Proposition 2.7 of Farb and
Margalit's book \cite{fm} shows that $G$ is a semidirect product with
normal subgroup isomorphic to $\zL_2/2\zL_2\cong
(\mathbb{Z}/2\mathbb{Z})^2$ and quotient group
$\text{PSL}(2,\mathbb{Z})$.  This is essentially the content of the
following proposition.

\begin{prop}\label{prop:mod} The branched map $\zp_2$ induces a
surjective group homomorphism $\text{SAff}(2,\mathbb{Z})\to G$ with
kernel $\zG_2$.  Hence $G\cong \text{SAff}(2,\mathbb{Z})/\zG_2.$
\end{prop}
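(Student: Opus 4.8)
The plan is to construct the homomorphism from the ``inducing down'' Lemma~\ref{lemma:inducedown}, to get surjectivity from the ``inducing up'' Lemma~\ref{lemma:induceup}, and to compute the kernel using the uniqueness clause of the latter. First I would build the map: given $\zJ\in\text{SAff}(2,\bbZ)$, Lemma~\ref{lemma:inducedown} (applied with $\zL=\zL'=\bbZ^2$ and $\zG=\zG'=\zG_2$) produces a branched covering $\zf_{\zJ}\co\bbR^2/\zG_2\to\bbR^2/\zG_2$ with $\zf_{\zJ}\circ\zp_2=\zp_2\circ\zJ$; its degree is $[\bbZ^2:\bbZ^2]=1$, so it is a homeomorphism, orientation-preserving because $\zJ$ is, and it carries $P=\zp_2(\bbZ^2)$ onto $\zp_2(\zJ(\bbZ^2))=P$, so it represents a class in $G$. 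Since $\zp_2$ is onto, $\zf_{\zJ}$ is the unique solution of $\zf_{\zJ}\circ\zp_2=\zp_2\circ\zJ$, and the identity $(\zf_{\zJ_1}\circ\zf_{\zJ_2})\circ\zp_2=\zp_2\circ(\zJ_1\circ\zJ_2)$ then forces $\zf_{\zJ_1\zJ_2}=\zf_{\zJ_1}\circ\zf_{\zJ_2}$. Hence $\zJ\mapsto[\zf_{\zJ}]$ is a group homomorphism $\text{SAff}(2,\bbZ)\to G$.

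For surjectivity, represent a given class in $G$ by an orientation-preserving homeomorphism $\zf$ of $\bbR^2/\zG_2\cong S^2$ with $\zf(P)=P$. Since $\zf(\zp_2(\bbZ^2))\subseteq\zp_2(\bbZ^2)$, Lemma~\ref{lemma:induceup}(2) yields an affine isomorphism $\zJ$ inducing $\zf$ up to isotopy rel $P$; and because $\zp_2(0)\in P=\zf(P)$, the same part of the lemma gives that $\zJ(\bbZ^2)$ is a sublattice of $\bbZ^2$. By Lemma~\ref{lemma:inducedown} the index of this sublattice equals the degree of the induced map, which is isotopic to $\zf$ and hence of degree $1$; so $\zJ(\bbZ^2)=\bbZ^2$ and $\zJ$ preserves orientation. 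Thus $\zJ\in\text{SAff}(2,\bbZ)$ maps onto the chosen class.

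For the kernel, one inclusion is immediate: if $\zJ\in\zG_2$ then $\zp_2\circ\zJ=\zp_2$ because $\zG_2$ is the deck group of $\zp_2$, so $\zf_{\zJ}=\mathrm{id}_{S^2}$ and $\zJ$ lies in the kernel. Conversely, suppose $\zf_{\zJ}$ is isotopic to $\mathrm{id}_{S^2}$ rel $P$. Since $\mathrm{id}_{\bbR^2}$ also induces $\mathrm{id}_{S^2}$, the uniqueness clause Lemma~\ref{lemma:induceup}(3) says that any affine isomorphism whose induced branched map is $\mathrm{id}_{S^2}$ up to isotopy rel $P$ — in particular $\zJ$ — differs from $\mathrm{id}_{\bbR^2}$ by pre- (or post-) composition with an element of $\zG_2$, so $\zJ\in\zG_2$. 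Therefore $\ker=\zG_2$; as the kernel of a homomorphism it is normal, and the isomorphism theorem gives $G\cong\text{SAff}(2,\bbZ)/\zG_2$.

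I expect this last step to be the real obstacle. It rests on the rigidity fact packaged into Lemma~\ref{lemma:induceup}(3) (equivalently, implicit in \cite[Prop.~2.7]{fm}) that the \emph{only} affine isomorphisms inducing maps isotopic to the identity rel $P$ are the deck transformations $\zG_2$ — i.e.\ that $\text{PSL}(2,\bbZ)$ injects into $G$. The weaker information that $\zf_{\zJ}$ merely fixes the four points of $P$ pointwise only forces the linear part of $\zJ$ to be $\equiv I\pmod 2$ and its translation part to lie in $2\bbZ^2$, which is strictly larger than $\zG_2$; it is the genuine isotopy hypothesis that is needed, and invoking Lemma~\ref{lemma:induceup}(3) is the clean way to use it.
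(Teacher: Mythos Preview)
Your proof is correct and follows essentially the same route as the paper's own proof: both construct the homomorphism via Lemma~\ref{lemma:inducedown} and then invoke Lemma~\ref{lemma:induceup} for surjectivity and the kernel computation. The paper's version is much terser---it simply says ``Using Lemma~2.2, we see that $\zv$ is surjective, that it is a group homomorphism and that its kernel is $\zG_2$''---whereas you spell out the degree-$1$ argument for surjectivity and the application of the uniqueness clause (3) for the kernel, and your closing paragraph correctly identifies that the isotopy (not merely pointwise-fixing) hypothesis is what makes the kernel exactly $\zG_2$.
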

  \begin{proof} We define a group homomorphism $\zv\co
\text{SAff}(2,\mathbb{Z})\to G$ as follows.  Let $\zJ\in
\text{SAff}(2,\mathbb{Z})$.  Lemma~2.1 with $\zL=\zL'=\mathbb{Z}^2$
obtains an orientation-preserving homeomorphism $\zj\co (S^2,P)\to
(S^2,P)$ such that $\zp_2\circ \zJ=\zj\circ \zp_2$.  We
define $\zv(\zJ)$ to be the isotopy class of $\zj$ in $G$.  Using
Lemma~2.2, we see that $\zv$ is surjective, that it is a group
homomorphism and that its kernel is $\zG_2$.  This proves
Proposition~\ref{prop:mod}.
\end{proof}

Because $\text{SAff}(2,\mathbb{Z})$ consists of maps of the form
$x\mapsto Ax+b$ with $A\in \text{SL}(2,\mathbb{Z})$ and $b\in
\mathbb{Z}^2$, Proposition~\ref{prop:mod} implies that every element
of $G$ has a $\text{PSL}(2,\mathbb{Z})$-term and a translation term.
The translation term is an element of $\mathbb{Z}^2$ modulo
$2\mathbb{Z}^2$.  We say that an element of $G$ is elliptic, parabolic
or hyperbolic according to whether its $\text{PSL}(2,\mathbb{Z})$-term
is elliptic, parabolic or hyperbolic.  (We define reflections and
glide reflections similarly when dealing with the extended modular
group.)  We say that an element of $G$ is a translation if its
$\text{PSL}(2,\mathbb{Z})$-term is trivial.  In this way every element
of the extended modular group has a type, that is, it is either
elliptic, parabolic, hyperbolic, a reflection, a glide reflection or a
translation.

Let $G_f$ denote the subgroup of $G$ consisting of those isotopy
classes represented by homeomorphisms $\zj\co (S^2,P)\to (S^2,P)$ for
which there exists a homeomorphism $\widetilde{\zj}\co (S^2,P)\to
(S^2,P)$ such that $\zj\circ f=f\circ \widetilde{\zj}$.  Hence the
bottom half of the diagram in Figure~\ref{fig:lift} is commutative.
We call $G_f$ the subgroup of \emph{liftables} in $G$.  Let
$\zD=\zp_1^{-1}(P)$.  We define the \emph{special affine
group} $\text{SAff}(f)$ of $f$ to be the group of all
orientation-preserving affine isomorphisms $\zJ\co \mathbb{R}^2\to
\mathbb{R}^2$ such that $\zJ(\zL_2)=\zL_2$, $\zJ(\zL_1)=\zL_1$ and
$\zJ(\zD)=\zD$.  The group $\text{SAff}(f)$ actually depends on
$\zL_2$, $\zL_1$ and $\zD$ in addition to $f$, but it is well defined
up to a conjugation isomorphism.  Its elements are affine planar
homeorphisms that first descend under the projection $\zp_1$
and then further descend under $f$, as in Figure \ref{fig:lift}.

\begin{prop}\label{prop:saff} The group homomorphism of
Proposition~\ref{prop:mod} restricts to a surjective group
homomorphism $\text{SAff}(f)\to G_f$.  In particular, every pair of
homeomorphisms $\zj$ and $\widetilde{\zj}$ stabilizing $P$ as in
Figure~\ref{fig:lift} can be modified by isotopies rel $P$ so that
they lift to an affine isomorphism $\zJ$ as in Figure~\ref{fig:lift}.
The kernel of this group homomorphism is $\text{SAff}(f)\cap \zG_2$.
So $G_f\cong \text{SAff}(f)/(\text{SAff}(f)\cap \zG_2)$.
\end{prop}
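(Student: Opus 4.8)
The plan is to reduce the statement to Proposition~\ref{prop:mod}: I would show that the epimorphism $\zv\co\text{SAff}(2,\mathbb{Z})\to G$ of that proposition restricts to a \emph{surjective} homomorphism $\text{SAff}(f)\to G_f$. Since the requirement $\zJ(\zL_2)=\mathbb{Z}^2$ built into the definition of $\text{SAff}(f)$ gives $\text{SAff}(f)\le\text{SAff}(2,\mathbb{Z})$, once surjectivity is in hand the kernel of the restriction is automatically $\text{SAff}(f)\cap\ker\zv=\text{SAff}(f)\cap\zG_2$, and the stated isomorphism follows from the first isomorphism theorem. So it remains to prove the two inclusions $\zv(\text{SAff}(f))\subseteq G_f$ and $G_f\subseteq\zv(\text{SAff}(f))$. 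I would use throughout the features of the set-up of Section~\ref{sec:background}: the maps $\zp_1,\zp_2$ are normal with deck groups $\zG_1\subseteq\zG_2$; the relation $f\circ\zp_1=\zp_2$ gives $\zp_1^{-1}(f^{-1}(P))=\zp_2^{-1}(P)=\zL_2=\mathbb{Z}^2$, so $\zp_1(\mathbb{Z}^2)=f^{-1}(P)$ and every $\zG_2$-orbit off $\mathbb{Z}^2$ is a disjoint union of $\deg f$ distinct $\zG_1$-orbits; and $\zD=\zp_1^{-1}(P)$ by definition.

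The inclusion $\zv(\text{SAff}(f))\subseteq G_f$ is routine. Given $\zJ\in\text{SAff}(f)$: the condition $\zJ(\zL_2)=\zL_2$ lets Lemma~\ref{lemma:inducedown} furnish the representative $\zj$ of $\zv(\zJ)$ with $\zj\circ\zp_2=\zp_2\circ\zJ$; the condition $\zJ(\zL_1)=\zL_1$ lets the same lemma (applied with $\zL=\zL'=\zL_1$) furnish a self-homeomorphism $\widetilde\zj$ of $\bbR^2/\zG_1=S^2$ with $\widetilde\zj\circ\zp_1=\zp_1\circ\zJ$; and $\zJ(\zD)=\zD$ together with $\zp_1(\zD)=P$ gives $\widetilde\zj(P)=P$. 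Then $f\circ\widetilde\zj\circ\zp_1=f\circ\zp_1\circ\zJ=\zp_2\circ\zJ=\zj\circ\zp_2=\zj\circ f\circ\zp_1$, and cancelling the surjection $\zp_1$ yields $f\circ\widetilde\zj=\zj\circ f$; thus $\zj$ exhibits $\zv(\zJ)$ as a member of $G_f$.

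For $G_f\subseteq\zv(\text{SAff}(f))$, let $[\zj]\in G_f$. By Proposition~\ref{prop:mod}, after an isotopy rel $P$ we may take $\zj$ to be the $\zp_2$-descent of some $\zJ_0\in\text{SAff}(2,\mathbb{Z})$, so $\zp_2\circ\zJ_0=\zj\circ\zp_2$; and since $[\zj]\in G_f$ (a property of the class) there is a homeomorphism $\widetilde\zj$ of $(S^2,P)$ with $\zj\circ f=f\circ\widetilde\zj$, from which, with $\zj(P)=P$, one gets $\widetilde\zj(f^{-1}(P))=f^{-1}(P)$. I would now force $\zp_1\circ\zJ_0=\widetilde\zj\circ\zp_1$ after a harmless adjustment of $\zJ_0$. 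Restricted to the connected set $\bbR^2\setminus\mathbb{Z}^2$, both $\zp_1\circ\zJ_0$ and $\widetilde\zj\circ\zp_1$ take values in $S^2\setminus f^{-1}(P)$ and satisfy $f\circ(\zp_1\circ\zJ_0)=\zj\circ\zp_2=f\circ(\widetilde\zj\circ\zp_1)$, so they are two lifts of $\zj\circ\zp_2$ through the $\deg f$-fold covering $f\co S^2\setminus f^{-1}(P)\to S^2\setminus P$ (which is unbranched, since the critical values of $f$ lie in $P$). The crucial claim — the place where the actual work lies — is that for a basepoint $x_0\in\bbR^2\setminus\mathbb{Z}^2$ the image under $\zp_1$ of the $\zG_2$-orbit of $\zJ_0(x_0)$ is the \emph{entire} $f$-fiber over $\zj\circ\zp_2(x_0)$; this combines the $\zG_1$-versus-$\zG_2$ orbit count recorded above with the fact that $\zJ_0\in\text{SAff}(2,\mathbb{Z})$ normalizes $\zG_2$. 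Granting it, replace $\zJ_0$ by $\zJ_0\circ\zg$ for an appropriate $\zg\in\zG_2$: this changes neither $\zv(\zJ_0)$ nor the relation $\zp_2\circ\zJ_0=\zj\circ\zp_2$, and it makes the two lifts agree at $x_0$, hence on $\bbR^2\setminus\mathbb{Z}^2$ by connectedness, hence on all of $\bbR^2$ by continuity: $\zp_1\circ\zJ_0=\widetilde\zj\circ\zp_1$. This identity forces $\zJ_0$ to normalize $\zG_1$, so that $\zJ_0(\zL_1)=\zL_1$, and then $\zJ_0(\zD)=\zJ_0(\zp_1^{-1}(P))=\zp_1^{-1}(\widetilde\zj(P))=\zp_1^{-1}(P)=\zD$. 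Hence $\zJ_0\in\text{SAff}(f)$ with $\zv(\zJ_0)=[\zj]$, which proves the inclusion; the ``in particular'' statement is also obtained, since $\zJ_0$ then lifts the pair $(\zj,\widetilde\zj)$.

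I expect the one genuinely nonformal point to be the orbit count: that a single $\zG_2$-orbit off $\mathbb{Z}^2$ is carried by $\zp_1$ onto a full $\deg f$-element fiber of $f$ — equivalently, that the two natural lifts of $\zj\circ\zp_2$ through $f$ can be synchronized by precomposing one of them with a deck transformation of $\zp_2$. Everything else is a diagram chase together with the outputs of Lemma~\ref{lemma:inducedown} and Proposition~\ref{prop:mod}. A more computational alternative would feed $\widetilde\zj$ — which fixes the branch locus $P_1=\zp_1(\zL_1)$ of $\zp_1$ setwise — into Lemma~\ref{lemma:induceup} to produce an affine $\zJ$ with $\zJ(\zL_1)=\zL_1$, and then argue that liftability of $\zj$ lets one adjust $\zJ$ within its $\zG_1$-cosets so as also to fix $\zL_2$ and $\zD$; but Lemma~\ref{lemma:induceup} pins down the $\zp_1$-descent only up to isotopy rel the four points of $P_1$, which is weaker than rel $P$, so the route above seems cleaner.
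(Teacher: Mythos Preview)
Your proof is correct. The paper's own argument is a two-line appeal: Lemma~\ref{lemma:inducedown} gives $\zv(\text{SAff}(f))\subseteq G_f$ (exactly as you do), and for surjectivity it simply cites statement~2 of Lemma~\ref{lemma:induceup}. Your route is genuinely different in execution: rather than invoking that lifting lemma, you produce $\zJ_0$ from Proposition~\ref{prop:mod} and then run a direct covering-space argument---synchronizing the two lifts $\zp_1\circ\zJ_0$ and $\widetilde\zj\circ\zp_1$ of $\zj\circ\zp_2$ through the unbranched cover $f\co S^2\setminus f^{-1}(P)\to S^2\setminus P$ by precomposing with a suitable $\zg\in\zG_2$. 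Your ``crucial claim'' about the $\zG_2$-orbit hitting the full $f$-fiber is immediate from $f\circ\zp_1=\zp_2$, so the argument is complete; the subsequent deduction that $\zJ_0$ normalizes $\zG_1$, hence $\zJ_0(\zL_1)=\zL_1$, is also fine (note that one needs both $\zJ_0\zG_1\zJ_0^{-1}\subseteq\zG_1$ and $\zJ_0^{-1}\zG_1\zJ_0\subseteq\zG_1$, the latter coming from $\widetilde\zj^{-1}$, to conclude equality of lattices rather than just containment).

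What your approach buys is precisely what you identify at the end: the paper's bare citation of Lemma~\ref{lemma:induceup}(2), if applied to $\widetilde\zj$ through $\zp_1$, controls the isotopy class only rel $P_1=\zp_1(\zL_1)$, whereas the proposition asserts control rel $P=P_2$. Your argument sidesteps this by starting from the $\zp_2$-lift of $\zj$ (where the isotopy is automatically rel $P$) and deducing compatibility with $\zp_1$ afterward. The paper's proof, read charitably, is packaging the same idea into the black box of Lemma~\ref{lemma:induceup}; yours makes the mechanism visible.
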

  \begin{proof} Let $\zJ\in \text{SAff}(f)$.  Lemma~2.1 obtains
homeomorphisms $\zj$ and $\widetilde{\zj}$ as in
Figure~\ref{fig:lift}.  Hence the group homomorphism of
Proposition~\ref{prop:mod} maps $\text{SAff}(f)$ to $G_f$.  Statement
2 of Lemma~2.2 proves the second statement of
Proposition~\ref{prop:saff}, giving surjectivity.  The kernel of this
restriction group homomorphism is clearly $\text{SAff}(f)\cap \zG_2$.
This proves Proposition~\ref{prop:saff}.

\begin{figure}
  \begin{equation*}
\xymatrix{\mathbb{R}^2\ar[d]_{\zp_1}\ar[r]^\zJ &
\mathbb{R}^2\ar[d]_{\zp_1}\\
S^2\ar[d]_f\ar[r]^{\widetilde{\zj}} & S^2\ar[d]_f\\
S^2\ar[r]^\zj & S^2}
  \end{equation*}
\caption{ Lifting $\zj$; descending $\Psi$}
\label{fig:lift}
\end{figure}
\end{proof}

The preceding discussion gives a simple necessary and sufficient
criterion for an element $\psi$ to lie in $G_f$. To prepare for the
statement, we introduce some terminology that will also feature in our
characterization of modular Hurwitz classes. Recall that we have
presented $f$ as in the beginning of \S \ref{sec:background}; we
therefore have lattices $\Lambda_2:=\mathbb{Z}^2$ and $\Lambda_1 <
\Lambda_2$.  There is a
distinguished subset $\mathcal{HS} \subset \mathcal{A}=\zL_2/2\zL_1$
 given by the image of
$\Delta:=\pi_1^{-1}(P_2)$ under the natural projection $\Lambda_2 \to
\Lambda_2/2\Lambda_1$. The subset $\mathcal{HS}$ is the \emph{Hurwitz
structure set} associated to this presentation of $f$.  The group
$\mathcal{A}$ and the subset $\mathcal{HS}$ may be visualized from a
presentation diagram as in Figure \ref{fig:firstprendgm} as
follows. Two copies of the parallelogram form a torus; the grid points
in this torus are the elements of $\mathcal{A}$, and the endpoints of
the green arcs in this torus are the elements of $\mathcal{HS}$.

\begin{lemma} \label{lemma:when_liftable} Suppose $\Psi \in
\text{SAff}(2,\mathbb{Z})$ and let $\psi \in G$ be the induced mapping
class element.
\begin{enumerate}
  \item $\psi \in G_f$ if and only if $\Psi(\Lambda_1)=\Lambda_1$ and
the induced map $\overline{\Psi}: \mathcal{A} \to \mathcal{A}$
satisfies $\overline{\Psi}(\mathcal{HS}) = \mathcal{HS}$.
  \item Suppose $\psi \in G_f \cap \text{PMod}(S^2, P_f)$. If there
exists a lift $\widetilde{\psi}$ which is the identity on $P_f$, it is
unique. This occurs if and only if $\Psi \in \Gamma(2)+2\Lambda_1$,
$\Psi(\Lambda_1)=\Lambda_1$, and the induced map $\overline{\Psi}:
\mathcal{A} \to \mathcal{A}$ has the property that
$\overline{\zJ}(h)=\pm h$ for every $h\in \cH \cS$.
\end{enumerate}
\end{lemma}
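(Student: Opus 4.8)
The idea is to push everything down to the affine model of \S\ref{sec:mdrgp} via Propositions~\ref{prop:mod} and~\ref{prop:saff}, convert each topological condition into an arithmetic statement about the lattices $\Lambda_1 < \Lambda_2 = \mathbb{Z}^2$, the deck group $\Gamma_1$, and the finite module $\mathcal{A} = \Lambda_2/2\Lambda_1$, and then do the bookkeeping. For~(1): by Propositions~\ref{prop:mod} and~\ref{prop:saff}, $\psi \in G_f$ exactly when $\psi$ has a representative in the image of $\text{SAff}(f)$; since $\varphi$ has kernel $\Gamma_2$ this means (after adjusting $\Psi$ by an element of $\Gamma_2$, which does not change $\psi$) that $\Psi \in \text{SAff}(f)$, i.e. $\Psi(\Lambda_2) = \Lambda_2$ — automatic for $\Psi \in \text{SAff}(2,\mathbb{Z})$ — together with $\Psi(\Lambda_1) = \Lambda_1$ and $\Psi(\Delta) = \Delta$. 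The first of these new equalities is the first displayed condition. For the second, observe that $\Delta = \pi_1^{-1}(P) \subseteq \Lambda_2$ is $\Gamma_1$-invariant, hence stable under translation by $2\Lambda_1$, so $\Delta$ is the full preimage in $\Lambda_2$ of $\mathcal{HS} \subseteq \mathcal{A}$ under $\Lambda_2 \to \mathcal{A}$; once $\Psi(\Lambda_1) = \Lambda_1$, the map $\Psi$ permutes the cosets of $2\Lambda_1$ in $\Lambda_2$ and so induces $\overline{\Psi}\colon \mathcal{A} \to \mathcal{A}$, and $\Psi(\Delta) = \Delta$ is equivalent to $\overline{\Psi}(\mathcal{HS}) = \mathcal{HS}$. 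One direction is then immediate: if the two conditions hold for $\Psi$ then $\Psi$ already lies in $\text{SAff}(f)$, so $\psi = \varphi(\Psi) \in G_f$. For the converse one must know that a representative of a liftable $\psi$ satisfying both conditions always exists, which is exactly the ``translation subgroup'' bookkeeping carried out in \S\ref{sec:mdrgp}.

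For the uniqueness clause in~(2): if $\widetilde{\psi}_1,\widetilde{\psi}_2$ are two lifts of $\psi$ under $f$ that restrict to the identity on $P_f$, then $\eta := \widetilde{\psi}_1 \circ \widetilde{\psi}_2^{-1}$ satisfies $f \circ \eta = f$, i.e. $\eta$ is a deck transformation of the branched covering $f$. The group of such deck transformations acts faithfully on a regular fiber of $f$, hence is finite, so $\eta$ has finite order; and since $\eta$ fixes $P_f$ pointwise its class lies in $\text{PMod}(S^2, P_f)$. In suitable coordinates $\text{PMod}(S^2, P_f) = \overline{\Gamma}(2)$ is free, in particular torsion-free, so the class of $\eta$ is trivial and $\widetilde{\psi}_1 = \widetilde{\psi}_2$ in $\text{PMod}(S^2, P_f)$.

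For the existence criterion in~(2): by Proposition~\ref{prop:saff} we may arrange that $\widetilde{\psi}$ and $\psi$ descend from an affine $\Psi \in \text{SAff}(f)$ along $\pi_1$, so $\widetilde{\psi} \circ \pi_1 = \pi_1 \circ \Psi$. Since $P_f = \pi_1(\Delta)$ and $\pi_1(\delta) = \pi_1(\delta')$ iff $\delta' \in \Gamma_1 \cdot \delta$, the homeomorphism $\widetilde{\psi}$ is the identity on $P_f$ iff $\Psi(\delta) \in \Gamma_1 \cdot \delta$, i.e. $\Psi(\delta) \equiv \pm\delta \pmod{2\Lambda_1}$, for every $\delta \in \Delta$; passing to $\mathcal{A}$, this is precisely $\overline{\Psi}(h) = \pm h$ for all $h \in \mathcal{HS}$ (which in particular forces $\overline{\Psi}(\mathcal{HS}) = \mathcal{HS}$, matching~(1)). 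The conditions $\Psi(\Lambda_1) = \Lambda_1$ and $\Psi \in \Gamma(2) + 2\Lambda_1$ then encode, respectively, that $\Psi \in \text{SAff}(f)$ and that $\psi \in \text{PMod}(S^2, P_f)$: the latter forces the linear part of $\Psi$ to reduce to $I$ mod $2$, hence to lie in $\Gamma(2)$, and its translation part to reduce to $0$ modulo $2\Lambda_2$, and combining this with the congruences at the points of $\mathcal{HS}$ (normalizing so that a point of $\Delta$ lies at the origin) forces the translation part into $2\Lambda_1$. Conversely, if all three conditions hold, then $\Psi \in \text{SAff}(f)$, so by Proposition~\ref{prop:saff} it descends to $\widetilde{\psi},\psi$ with $\psi \in \text{PMod}(S^2, P_f)$ and with $\widetilde{\psi}$ literally the identity on $\pi_1(\Delta) = P_f$.

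The main obstacle I anticipate is the careful treatment of the translation subgroup and of deck transformations: ensuring the conditions are phrased for an appropriate (representative-independent) $\Psi$, that ``is the identity on $P_f$'' is tracked on the nose rather than only up to isotopy — except precisely where torsion-freeness of $\overline{\Gamma}(2)$ is invoked — and that moving among the three levels of Figure~\ref{fig:lift} neither creates nor destroys deck transformations. These are the delicate points that \S\ref{sec:mdrgp} is designed to handle.
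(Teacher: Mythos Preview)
Your approach is essentially the same as the paper's: the paper's proof simply says ``The uniqueness in conclusion 2 follows from \cite[Lemma 2.1]{Ko} as well as the discussion for the pure modular group at the end of this section. The rest is subsumed in the previous discussion,'' and you have faithfully unpacked that previous discussion (Propositions~\ref{prop:mod} and~\ref{prop:saff}) and reproduced the end-of-section observation that the pure analogue of $\text{DeckMod}(f)$ is trivial via torsion-freeness of $\overline{\Gamma}(2)$. The one place you diverge slightly is that you argue uniqueness directly from torsion-freeness rather than citing Koch; this is exactly what the paper's alternative ``discussion for the pure modular group'' amounts to, so there is no real difference in content.
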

\begin{proof} The uniqueness in conclusion 2 follows from \cite[Lemma
2.1]{Ko} as well as the discussion for the pure modular group at
the end of this section. The rest is subsumed in the previous
discussion.
\end{proof}

The next proposition gives some rough bounds on the index of the
liftable subgroup.

\begin{prop}\label{prop:gamman} $(1)$ In terms of the isomorphism
$G\cong \text{PSL}(2,\bbZ)\ltimes (\bbZ/2\bbZ)^2$, the group $G_f$
contains $\overline{\zG}(2D)\times \{0\}$, where $D=\deg(f)$ and
$\overline{\zG}(2D)$ is the image in $\text{PSL}(2,\bbZ)$ of the
principle congruence subgroup $\zG(2D)=\{\left[\begin{smallmatrix}a &
b \\ c & d
\end{smallmatrix}\right]\in \text{SL}(2,\bbZ):
\left[\begin{smallmatrix}a &b \\ c & d \end{smallmatrix}\right]
\equiv \left[\begin{smallmatrix}1 & 0 \\ 0 & 1
\end{smallmatrix}\right]\text{ mod } 2D\}$.\\
$(2)$ 
  \begin{equation*}
[G:G_f]\le 16D^3\prod_{p|2D}^{}(1-p^{-2})
  \end{equation*}
\end{prop}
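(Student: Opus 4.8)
The plan is to bound $[G:G_f]$ by exhibiting an explicit subgroup of $G_f$ and computing its index in $G$. The natural candidate is the subgroup $H$ of $G$ arising from those $\Psi \in \text{SAff}(2,\mathbb{Z})$ whose linear part lies in $\zG(2D)$ and whose translation part is trivial; equivalently, the image under $\zv$ of $\zG(2D) \ltimes \{0\}$. By part (1) of the proposition — which I may assume, since it precedes the statement — this $H$ is contained in $G_f$, so $[G:G_f] \le [G:H]$, and the task reduces to a purely arithmetic index computation.

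First I would compute $[G:H]$ by factoring it through the extension $1 \to (\bbZ/2\bbZ)^2 \to G \to \text{PSL}(2,\bbZ) \to 1$. Since $H$ maps isomorphically onto $\overline{\zG}(2D) \le \text{PSL}(2,\bbZ)$ and has trivial translation part, one gets $[G:H] = 4 \cdot [\text{PSL}(2,\bbZ) : \overline{\zG}(2D)]$, the factor $4$ accounting for the $(\bbZ/2\bbZ)^2$ of translations that $H$ omits. Then $[\text{PSL}(2,\bbZ):\overline{\zG}(2D)]$ is obtained from the classical index formula for principal congruence subgroups: $[\text{SL}(2,\bbZ):\zG(N)] = N^3 \prod_{p \mid N}(1-p^{-2})$ for $N \ge 1$, and passing to $\text{PSL}$ divides by $2$ when $N > 2$ (since $-I \notin \zG(N)$), contributing a compensating factor that I would track carefully. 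Taking $N = 2D$ and assembling the constants $4$, the $\text{SL}$-to-$\text{PSL}$ factor, and the formula $N^3 = 8D^3$ yields $[G:G_f] \le 16 D^3 \prod_{p \mid 2D}(1-p^{-2})$, matching the claimed bound.

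The main obstacle is bookkeeping around the factors of $2$: one must be careful about whether $-I \in \zG(2D)$ (it is not, for $D \ge 1$, since $-I \not\equiv I \bmod 2D$ unless $2D \mid 2$, i.e. $D=1$, the trivial case), about the normalization $\Lambda_2 = \bbZ^2$, and about the precise meaning of $\overline{\zG}(2D)$ as the \emph{image} in $\text{PSL}(2,\bbZ)$ rather than an intersection. A clean way to avoid sign pitfalls is to work with indices in $\text{SAff}(2,\mathbb{Z})/\zG_2 \cong G$ throughout, noting $\zG_2$ corresponds exactly to the order-8 subgroup generated by $\{x \mapsto -x\}$ together with the translations by $\bbZ^2$ modulo $2\bbZ^2$, so that $|\zG_2/(\zG_2 \cap (\text{translations}))| = 2$; the arithmetic then reorganizes itself with the stated constant. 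I would also remark that the bound is far from sharp — it ignores entirely the constraint $\overline{\Psi}(\mathcal{HS}) = \mathcal{HS}$ from Lemma~\ref{lemma:when_liftable}(1) — but that sharpness is not needed here, only a finite effective bound, which the computation delivers.
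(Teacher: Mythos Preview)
Your approach to part (2) is exactly the paper's: use part (1) to place $\overline{\zG}(2D)\times\{0\}$ inside $G_f$, then compute $[G:\overline{\zG}(2D)\times\{0\}]=4\cdot[\text{PSL}(2,\bbZ):\overline{\zG}(2D)]$ via the classical index formula for principal congruence subgroups. The paper simply cites Miyake for the index and dispatches the factor of $4$ in a single sentence; your more careful $\text{SL}$-versus-$\text{PSL}$ bookkeeping is consistent with this.

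The gap is that part (1) is part of the statement you are asked to prove, not something established before it, so you may not simply assume it. The paper's argument for (1) is short and worth supplying: since $D=|\zL_2/\zL_1|$, one has $D\zL_2\subseteq\zL_1$ and hence $2D\zL_2\subseteq 2\zL_1$. Any $M\in\zG(2D)$ satisfies $(M-I)\zL_2\subseteq 2D\zL_2\subseteq 2\zL_1$, so $M$ acts as the identity on $\mathcal{A}=\zL_2/2\zL_1$; in particular $M(\zL_1)=\zL_1$ and $M$ fixes $\mathcal{HS}$ pointwise. Thus $\zG(2D)\subseteq\text{SAff}(f)$, and Proposition~\ref{prop:saff} gives $\overline{\zG}(2D)\times\{0\}\subseteq G_f$. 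Once you insert this paragraph, your proof is complete and coincides with the paper's.
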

  \begin{proof} We begin with statement 1.  We have that
$D=\left|\zL_2/\zL_1\right|$.  So $D \zL_2\subseteq \zL_1$ and $2D
\zL_2\subseteq 2\zL_1$.  It follows that $\zG(2D)$ maps $2\zL_1$ into
itself and acts trivially on $\mathcal{A}$.  Hence $\zG(2D)\subseteq
\text{SAff}(f)$.  Proposition~\ref{prop:saff} now completes the proof
of statement 1.

Statement 2 results from multiplying 4
($=\left|(\bbZ/2\bbZ)^2\right|$) times the index of
$\overline{\zG}(2D)$ in $\text{PSL}(2,\bbZ)$, a well known number
\cite[Theorem 4.2.5]{M}.
\end{proof}

The group of \emph{modular group deck transformations} of $f$ is the
set $\text{DeckMod}(f)$ of elements in $G$ represented by
homeomorphisms $\zv\co (S^2,P)\to (S^2,P)$ such that
$f\circ\zv=\zj\circ f$, where $\zj\co (S^2,P)\to (S^2,P)$ is a
homeomorphism representing the trivial element of $G$.  In particular,
if $\zj$ is the identify map, then $\zv$ is an ordinary deck
transformation of $f$ (which stabilizes $P$).

\begin{prop}\label{prop:deck} $(1)$ The branched map $\zp_1$ induces a
surjective group homomorphism $\text{SAff}(f)\cap \zG_2\to
\text{DeckMod}(f)$ with kernel $\zG_1$.  Hence the elements of
$\text{DeckMod}(f)$ are translations and
  \begin{equation*}
\text{DeckMod}(f)\cong (\text{SAff}(f)\cap \zG_2)/\zG_1.
  \end{equation*}

\noindent $(2)$ There are three possibilities for $\text{DeckMod}(f)$
up to isomorphism:
  \begin{equation*}
\{1\}, \text{ }\mathbb{Z}/2\mathbb{Z} \text{ or }
(\mathbb{Z}/2\mathbb{Z})^2.
  \end{equation*}
\end{prop}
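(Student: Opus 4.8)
The plan is to establish both statements by pushing everything down to the affine picture via Lemma~\ref{lemma:inducedown} and Lemma~\ref{lemma:induceup}, exactly as in the proofs of Propositions~\ref{prop:mod} and~\ref{prop:saff}. For statement~$(1)$, I would start from an element $\zv\in\text{DeckMod}(f)$, represented by a homeomorphism satisfying $f\circ\zv=\zj\circ f$ with $\zj$ isotopic to the identity rel $P$. Precomposing $f$ on the target with such a $\zj$ does not change it up to isotopy, so $\zv$ lifts, under $f$, to a homeomorphism $\widetilde{\zv}\co S^2\to S^2$ that covers $\zv$ and is isotopic to the identity; reinterpreting through the projection $\zp_1$, Lemma~\ref{lemma:induceup} applied to the identity map $\text{id}\co \bbR^2/\zG_2\to\bbR^2/\zG_2$ (which trivially maps $\zp_2(\zL_2)$ into itself) produces an affine lift. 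More directly: since $\zv\in G$, Proposition~\ref{prop:mod} gives $\zV\in\text{SAff}(2,\bbZ)$ inducing $\zv$; the condition $f\circ\zv=\zj\circ f$ with $\zj\simeq\text{id}$ rel $P$ forces, after modifying $\zV$ by an element of $\zG_2$, that $\zV$ descends through $\zp_1$ to a homeomorphism isotopic to the identity, i.e. $\zV(\zL_1)=\zL_1$, $\zV(\zD)=\zD$ \emph{and} the induced map on $\bbR^2/\zG_1$ is isotopic to the identity rel $\zp_1(\zL_1)$. But an orientation-preserving affine self-map of the torus isotopic to the identity has trivial linear part, so $\zV$ is a translation; hence $\zV\in\text{SAff}(f)\cap\zG_2$. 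Conversely every element of $\text{SAff}(f)\cap\zG_2$ descends under $\zp_1$ and under $f$ to a pair $(\widetilde{\zj},\zj)$ with $\zj\simeq\text{id}$, giving a modular deck transformation; the kernel of $\text{SAff}(f)\cap\zG_2\to\text{DeckMod}(f)$ consists of those affine translations that descend to the identity on $\bbR^2/\zG_1$, which is precisely $\zG_1$. This yields $\text{DeckMod}(f)\cong(\text{SAff}(f)\cap\zG_2)/\zG_1$, and since every element is a translation, the group is abelian of exponent dividing $2$.

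For statement~$(2)$, the key point is that $\text{DeckMod}(f)$, being a quotient of $\zG_2/\zG_1$ of exponent $2$, embeds in $\zL_2/2\zL_1$ restricted to its $2$-torsion; more precisely one identifies $(\text{SAff}(f)\cap\zG_2)/\zG_1$ with a subgroup of $\zL_1/2\zL_1$ cut out by the stabilizer condition on $\zD$, hence with a subgroup of $(\bbZ/2\bbZ)^r$. Since $\zL_1$ has rank $2$, $\zL_1/2\zL_1\cong(\bbZ/2\bbZ)^2$, so $\text{DeckMod}(f)$ is isomorphic to $\{1\}$, $\bbZ/2\bbZ$, or $(\bbZ/2\bbZ)^2$. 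I would spell out the identification: an element $x\mapsto 2\zl+x$ with $\zl\in\zL_1$ lies in $\text{SAff}(f)$ iff $2\zl+\zD=\zD$, i.e. iff translation by $2\zl$ fixes the finite set $\zD/2\zL_1\subset\zL_2/2\zL_1$; and $x\mapsto 2\zl-x$ is ruled out as a deck transformation of $f$ because it reverses orientation on the relevant quotient — wait, rather, one must check the $-x$ case descends appropriately — so the subgroup is determined by which classes $\zl\in\zL_1/2\zL_1$ translate $\mathcal{HS}$ to itself (using only the $+$ isometries, which are the orientation-preserving translations of the torus). Modding out by $\zG_1$ identifies $\zl$ with $0$, so the result is a subgroup of $\zL_1/2\zL_1\cong(\bbZ/2\bbZ)^2$, giving exactly the three listed possibilities.

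The main obstacle I anticipate is the careful bookkeeping in the orientation-plus-translation dichotomy within $\zG_2$: the group $\zG_2$ contains both $x\mapsto 2\zl+x$ and $x\mapsto 2\zl-x$, and one must verify that the second family contributes nothing new to $\text{DeckMod}(f)$ — i.e. that an affine isometry with linear part $-I$ cannot descend through $f$ to a map isotopic to the identity rel $P$ together with a nontrivial deck action, because its image in $\text{PSL}(2,\bbZ)$ is trivial but as a deck transformation over a point of $P$ it would have to act as a rotation that is not realized. Equivalently, one checks directly that $-I\notin\zG_1$-cosets contributing to the quotient, or that the composite with $\zG_1$ already absorbs the linear part. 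Once that case-check is dispatched, the statement reduces to the elementary structure of subgroups of $(\bbZ/2\bbZ)^2$, and the only thing left to confirm is that all three cases genuinely occur — but since the Proposition merely asserts these are the possibilities ``up to isomorphism,'' realizability is not required here, so I would not belabor it.
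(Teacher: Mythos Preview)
Your strategy—reduce to the affine picture via Lemmas~\ref{lemma:inducedown} and~\ref{lemma:induceup}—is exactly the paper's, and your treatment of statement~(2) is essentially correct. But statement~(1) contains a real confusion in the surjectivity direction.

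You write: ``since $\zv\in G$, Proposition~\ref{prop:mod} gives $\zV\in\text{SAff}(2,\bbZ)$ inducing $\zv$; the condition $f\circ\zv=\zj\circ f$ with $\zj\simeq\text{id}$ rel $P$ forces, after modifying $\zV$ by an element of $\zG_2$, that $\zV$ descends through $\zp_1$ to a homeomorphism isotopic to the identity.'' There are two problems. First, Proposition~\ref{prop:mod} lifts via $\zp_2$, not $\zp_1$; the homomorphism you want sends an affine map to what it induces via $\zp_1$, so the relevant lift is the \emph{simultaneous} one from Proposition~\ref{prop:saff} (equivalently Lemma~\ref{lemma:induceup} applied to the pair $(\widetilde{\zj},\zj)=(\zv,\zj)$), which produces $\zJ\in\text{SAff}(f)$ inducing $\zv$ via $\zp_1$ \emph{and} $\zj$ via $\zp_2$. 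Second, and more seriously, it is $\zj$ (induced via $\zp_2$) that is isotopic to the identity, not the map induced via $\zp_1$—the latter is $\zv$ itself, the possibly nontrivial deck element you started with. With the correct simultaneous lift in hand, $\zj\simeq\text{id}$ rel $P$ forces $\zJ\in\zG_2$ by Proposition~\ref{prop:mod}, and surjectivity follows. (Incidentally, $\bbR^2/\zG_1$ is the sphere, not a torus; you may be thinking of $\bbR^2/2\zL_1$.)

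Your worry about the maps $x\mapsto 2\zl-x$ is a non-issue, and the resolution is simpler than you suspect: the map $x\mapsto -x$ (take $\zl=0$) already lies in $\zG_1$, so modulo $\zG_1$ every element of $\text{SAff}(f)\cap\zG_2$ is represented by a pure translation $x\mapsto x+2\zl$ with $\zl\in\zL_2$. (Your aside about orientation reversal is wrong in any case: $-I$ has determinant $+1$ in dimension~$2$.) This immediately shows $\text{DeckMod}(f)$ embeds in the group of such translations modulo $\zG_1$; imposing $\zJ(\zL_1)=\zL_1$, i.e.\ $2\zl\in\zL_1$, lands you in $(2\zL_2\cap\zL_1)/2\zL_1\subseteq\zL_1/2\zL_1\cong(\bbZ/2\bbZ)^2$, which is exactly the paper's argument for statement~(2).
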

  \begin{proof} Let $\zJ\in \text{SAff}(f)\cap \zG_2$.  Lemma~2.1
shows that $\zJ$ induces homeomorphisms $\zj$ and $\widetilde{\zj}$ as
in Figure~\ref{fig:lift}, making the diagram commute.  Because $\zJ\in
\zG_2$, the map $\zj$ is the identity map.  So $\widetilde{\zj}$
represents an element of $\text{DeckMod}(f)$.  This defines a group
homomorphism from $\text{SAff}(f)\cap \zG_2$ to $\text{DeckMod}(f)$.
Statement 2 of Lemma~2.2 shows that this homomorphism is surjective.
Its kernel is clearly $\text{SAff}(f)\cap \zG_1=\zG_1$.  This proves
statement 1.

Statement 1 implies that $\text{DeckMod}(f)$ is isomorphic to a
subgroup of $\mathcal{A}$.
The latter group arises
as translations by elements of $2\zL_2$ modulo translations by
elements of $2\zL_1$.  Because these translations must map $\zL_1$
into itself, $\text{DeckMod}(f)$ is in fact isomorphic to a subgroup
of $(2 \zL_2\cap \zL_1)/2\zL_1\subseteq \zL_1/2\zL_1\cong
(\mathbb{Z}/2\mathbb{Z})^2$.  This easily proves statement 2 and
completes the proof of Proposition~\ref{prop:deck}.
\end{proof}

\begin{remark}\label{remark:deckmod} In this remark we discuss the fact
that each of the three groups in statement 2 of
Proposition~\ref{prop:deck} occurs for some NET map.  For this we use
the notion of Hurwitz structure set defined just before Lemma~\ref{lemma:when_liftable}.  The proof of statement 2 essentially
shows that $\text{DeckMod}(f)$ is isomorphic to the subgroup of
$(2\zL_2\cap \zL_1)/2\zL_1$ which stabilizes the Hurwitz structure set
$\cH\cS$ of $f$ under the translation action of $(2\zL_2\cap \zL_1)/2\zL_1$
on $\zL_2/2\zL_1$.  So if $\cH\cS$ is not a union of cosets of any of the
subgroups of order 2 in $(2\zL_2\cap \zL_1)/2\zL_1$, then
$\text{DeckMod}(f)$ is trivial.  This is the generic case.  In
particular, if $2\zL_2\cap \zL_1=2\zL_1$, then $\text{DeckMod}(f)$ is
trivial.  The condition $2\zL_2\cap \zL_1=2\zL_1$ is true exactly when
the degree $|\zL_2/\zL_1|$ of $f$ is odd.  So if the degree of $f$ is
odd, then $\text{DeckMod}(f)$ is trivial.

If $\cH\cS$ is a union of cosets of some subgroup of order 2 in
$(2\zL_2\cap \zL_1)/2\zL_1$ but not a larger subgroup of $(2\zL_2\cap
\zL_1)/2\zL_1$, then $\text{DeckMod}(f)\cong \mathbb{Z}/2\mathbb{Z}$.
This is the situation for the NET map $f$ of Example 10.7 of
\cite{cfpp}.  In that example we identify $\zL_2/2\zL_1$ with
$(\mathbb{Z}/4\mathbb{Z})\oplus (\mathbb{Z}/4\mathbb{Z})$ so that
$2\zL_2=\zL_1$, $\zL_1/2\zL_1=\{(0,0),(2,0),(0,2),(2,2)\}$ and
$\cH\cS=\{(0,0),\pm (1,0),(2,0),\pm (1,2)\}$.  The stabilizer of
$\cH\cS$ in $\zL_1/2\zL_1$ is $\{(0,0),(2,0)\}\cong
\mathbb{Z}/2\mathbb{Z}$.  A nontrivial modular group deck
transformation of $f$ is used in \cite{cfpp} to show that $\sigma_f$
is constant.

If $\zL_1\subseteq 2\zL_2$ and $\cH\cS$ is a union of cosets of
$\zL_1/2\zL_1$, then $\text{DeckMod}(f)\cong
(\mathbb{Z}/2\mathbb{Z})^2$.  This occurs for every Euclidean NET map
for which $\zL_1\subseteq 2\zL_2$ because the Euclidean case is
exactly the case in which $\cH\cS=\zL_1/2\zL_1$.
\end{remark}

We next discuss the modular group virtual multi-endomorphism in the
present setting.  Let $\zJ\in \text{SAff}(f)$.  We have two group
homomorphisms induced by the assignments $\zJ\mapsto \zj$ and
$\zJ\mapsto \widetilde{\zj}$ with $\zJ$, $\zj$ and $\widetilde{\zj}$
related as in Figure~\ref{fig:lift}.  Since the diagram in
Figure~\ref{fig:lift} is commutative, $\zj\circ f=f\circ
\widetilde{\zj}$.  If $\text{DeckMod}(f)$ is trivial, then the
assignment $\zj\mapsto \widetilde{\zj}$ determines a group
homomorphism from $G_f$ to $G$.  This is the modular group virtual
multi-endomorphism.  In general, $\text{DeckMod}(f)$ is nontrivial and
instead of the assignment $\zj\mapsto \widetilde{\zj}$ determining a
function, it determines a many-valued multifunction.  It maps every
element of $G_f$ to a right coset of $\text{DeckMod}(f)$.  In
particular, the identity element of $G_f$ maps to
$\text{DeckMod}(f)$. 

The next proposition shows that these right cosets are also left
cosets.  It also provides information on the image of translations
under the modular group virtual multi-endomorphism.

\begin{prop}\label{prop:deckmod} $(1)$ The image of $G_f$ under the
modular group virtual multi-endomorphism is a subgroup of $G$, and
$DeckMod(f)$ is a normal subgroup of it.\\ 
$(2)$ If the Thurston pullback map $\zs_f$ of $f$ is not constant,
then the modular group virtual multi-endomorphism of $f$ maps
translations to translations.
\end{prop}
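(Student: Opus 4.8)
We want to prove two things: (1) the image of $G_f$ under the virtual multi-endomorphism is a subgroup of $G$ with $\text{DeckMod}(f)$ normal in it, and (2) if $\sigma_f$ is non-constant then translations map to translations.

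For part (1): the plan is to work at the level of special affine groups, where the multifunction becomes an honest homomorphism. Let me think about what the image is. By Proposition \ref{prop:saff}, an element $\psi \in G_f$ is induced by some $\Psi \in \text{SAff}(f)$, and the lift $\widetilde\psi$ corresponds to $\Psi$ viewed through the *other* vertical arrow $\pi_1$ in Figure \ref{fig:lift}. So the assignment $\psi \dashrightarrow \widetilde\psi$ is, upstairs, just the identity on $\text{SAff}(f)$, read off through two different quotient maps. The first reads $\text{SAff}(f) \to \text{SAff}(f)/(\text{SAff}(f)\cap \zG_2) \cong G_f$; the second reads $\text{SAff}(f) \to \text{SAff}(f)/(\text{SAff}(f)\cap \zG_1)$... wait, I need to be careful: $\Psi$ descends under $\pi_1$ to a homeomorphism $\widetilde\psi$ of $(S^2, P)$, and the kernel of "$\Psi \mapsto$ isotopy class of $\widetilde\psi$" is $\text{SAff}(f) \cap \zG_1 = \zG_1$ (the deck group of $\pi_1$ restricted to $\text{SAff}(f)$). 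So the image of the multi-endomorphism is precisely the image of $\text{SAff}(f)$ in $G = \text{SAff}(2,\mathbb{Z})/\zG_2$, namely $\text{SAff}(f)\zG_2/\zG_2 \cong \text{SAff}(f)/(\text{SAff}(f)\cap\zG_2)$ — which is a subgroup of $G$. Good. Now $\text{DeckMod}(f)$, by Proposition \ref{prop:deck}, is the image of $\text{SAff}(f)\cap \zG_2$ in this same quotient $\text{SAff}(f)/\zG_1$; since $\text{SAff}(f)\cap\zG_2$ is normal in $\text{SAff}(f)$ (it's $\text{SAff}(f)\cap$ a subgroup — hmm, $\zG_2$ is not normal in $\text{SAff}(2,\mathbb{Z})$, but it is normalized by $\text{SAff}(f)$ since conjugating $x\mapsto 2\lambda\pm x$ by an affine $\Psi$ preserving $\zL_2$ gives a map of the same form), its image $\text{DeckMod}(f)$ is normal in the image of $\text{SAff}(f)$. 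This proves (1). The main bookkeeping obstacle is keeping straight which quotient corresponds to which arrow, and verifying the two descriptions of the image coincide as subgroups of $G$ — I'd write this out via a short diagram chase on Figure \ref{fig:lift}.

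For part (2): the idea is that a translation $\psi$ has trivial $\text{PSL}(2,\mathbb{Z})$-term, so its representative $\Psi \in \text{SAff}(f)$ has linear part $A = \pm I$; after composing with an element of $\zG_2$ (which changes nothing in $G$) we may assume $A = I$, so $\Psi: x \mapsto x + b$ is a genuine translation of $\mathbb{R}^2$ lying in $\text{SAff}(f)$. Its lift $\widetilde\psi$ is induced by this same translation $\Psi$ through $\pi_1$, which has linear part $I$ as well — so $\widetilde\psi$ is *always* a translation in $G$, with no hypothesis needed! So in fact (2) should hold unconditionally... unless the point is the following subtlety: the multi-endomorphism is multi-valued, and we need *every* value (i.e., every element of the coset $\widetilde\psi \cdot \text{DeckMod}(f)$) to be a translation. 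But $\text{DeckMod}(f)$ consists of translations by Proposition \ref{prop:deck}(1), and a product of translations is a translation, so the whole coset is translations. So part (2) also looks unconditional. This forces me to re-read: the non-constancy of $\sigma_f$ must be needed to rule out a degenerate situation — presumably when $\sigma_f$ is constant, the "virtual multi-endomorphism" is not defined pointwise on $G_f$ in the naive way, or $G_f$ may fail to inject, or the linear part is not determined by two rational slopes (cf. fact (i) in the introduction, which requires $\sigma_f$ non-constant for the linear part of a mapping class to be pinned down by its action on two slopes). So the real plan is: invoke the principle (fact (i), from \cite{cfpp}) that when $\sigma_f$ is non-constant, $\sigma_f \circ \psi = \widetilde\psi \circ \sigma_f$ on $\mathbb{H}$ and the boundary action on $\overline{\mathbb{Q}}$ determines the $\text{PSL}(2,\mathbb{Z})$-type; a translation acts trivially on the relevant boundary data / Teichmüller space, hence $\sigma_f \circ \psi = \sigma_f$, hence $\widetilde\psi \circ \sigma_f = \sigma_f$ forces $\widetilde\psi$'s linear part to fix $\sigma_f(\mathbb{H})$ pointwise — and since $\sigma_f$ is non-constant its image is not a single point, so the only Möbius element fixing it is the identity, giving $\widetilde\psi$ a trivial $\text{PSL}(2,\mathbb{Z})$-term, i.e. a translation.

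I expect the main obstacle in part (2) to be pinning down precisely why the hypothesis is needed — i.e., identifying the exact failure mode when $\sigma_f$ is constant (the affine representative $\Psi$ of $\widetilde\psi$ need not have linear part $\pm I$ even though $\psi$ is a translation, because when $\sigma_f$ is constant the lift $\widetilde\psi$ is only determined up to the full deck-and-then-some ambiguity, and Lemma \ref{lemma:when_liftable}-style rigidity fails). Once that is correctly identified, the argument is the short $\sigma_f$-equivariance computation sketched above. For part (1), no real obstacle — it is a diagram chase through Figure \ref{fig:lift} plus the observation that $\text{SAff}(f)$ normalizes $\zG_2$.
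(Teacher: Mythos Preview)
Your approach to statement (1) is genuinely different from the paper's.  The paper argues directly with isotopy classes: given $\delta\in\text{DeckMod}(f)$ and $\widetilde{\gamma}$ in the image, one computes
\[
[f]=[f]\widetilde{\gamma}\widetilde{\gamma}^{-1}=\gamma[f]\widetilde{\gamma}^{-1}=\gamma[f]\delta\widetilde{\gamma}^{-1}=[f]\widetilde{\gamma}\delta\widetilde{\gamma}^{-1},
\]
so $\widetilde{\gamma}\delta\widetilde{\gamma}^{-1}\in\text{DeckMod}(f)$.  Your route---observe that $\rho:\text{SAff}(f)\to G$, $\Psi\mapsto[\widetilde{\psi}]$, is a homomorphism, so its image is a subgroup; $\text{SAff}(f)\cap\Gamma_2$ is normal in $\text{SAff}(f)$ and maps onto $\text{DeckMod}(f)$ under $\rho$, so the latter is normal in $\rho(\text{SAff}(f))$---is also correct, and arguably more structural.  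Two slips to fix: (i) you write the image as $\text{SAff}(f)\Gamma_2/\Gamma_2$, but that is the image under the \emph{other} map $\Psi\mapsto[\psi]$, i.e.\ $G_f$ itself.  The image of the multi-endomorphism is $\rho(\text{SAff}(f))$, which has no such tidy description (and need not equal $G_f$).  Fortunately your argument does not actually use the identification.  (ii) $\Gamma_2$ \emph{is} normal in $\text{SAff}(2,\mathbb{Z})$---otherwise $G=\text{SAff}(2,\mathbb{Z})/\Gamma_2$ would not be a group.

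For statement (2), your eventual argument is exactly the paper's: $\sigma_f\circ\sigma_\psi=\sigma_{\widetilde{\psi}}\circ\sigma_f$, translations have $\sigma_\psi=1$, so $\sigma_{\widetilde{\psi}}$ fixes $\sigma_f(\mathbb{H})$ pointwise; non-constancy of $\sigma_f$ then forces $\sigma_{\widetilde{\psi}}=1$, hence $\widetilde{\psi}$ is a translation.  Your earlier ``unconditional'' attempt fails for precisely the reason you suspect at the end: the class $[\widetilde{\psi}]\in G$ is computed by lifting $\widetilde{\psi}$ through $\pi_2$, not $\pi_1$, and the $\pi_2$-lift $\widetilde{\Psi}$ need not have linear part $\pm I$ even when $\Psi$ does.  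Remark~\ref{remark:tlns} confirms this is not hypothetical: when $\sigma_f$ is constant there really are liftable translations whose image is elliptic.
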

  \begin{proof} It is easy to see that the image of $G_f$ under the
modular group virtual multi-endomorphism is a subgroup of $G$.  To
prove the normality statement, let $\zd\in \text{DeckMod}(f)$ and let
$\zg\in G_f$.  Let $[f]$ denote the isotopy class of $f$, the set of
NET maps which are isotopic to $f$ rel the postcritical set of $f$.
Then
  \begin{equation*}
[f]=[f]\widetilde{\zg}\widetilde{\zg}^{-1}=\zg[f]\widetilde{\zg}^{-1}=
\zg[f]\zd \widetilde{\zg}^{-1}=[f]\widetilde{\zg}\zd
\widetilde{\zg}^{-1}.
  \end{equation*}
Thus $\widetilde{\zg}\zd \widetilde{\zg}^{-1}\in \text{DeckMod}(f)$.
This proves statement 1.

To prove statement 2, let $\zv$ be a liftable translation in the
modular group of $f$.  Let $\zs_\zv$ and $\zs_{\widetilde{\zv}}$ be
the pullback maps on $\mathbb{H}$ induced by $\zv$ and
$\widetilde{\zv}$.  Then $\zs_f\circ
\zs_\zv=\zs_{\widetilde{\zv}}\circ \zs_f$.  But $\zs_\zv=1$ because
$\zv$ is a translation.  So $\zs_f=\zs_{\widetilde{\zv}}\circ \zs_f$.
This implies that $\zs_f$ maps $\mathbb{H}$ into the fixed point set
of $\zs_{\widetilde{\zv}}$.  So if $\zs_f$ is not constant, then
$\zs_{\widetilde{\zv}}=1$.  Hence $\widetilde{\zv}$ is a translation.
This proves Proposition~\ref{prop:deckmod}.
\end{proof}

\begin{remark}\label{remark:tlns} This remark deals with statement 2
of Proposition~\ref{prop:deckmod}.  Again let $f$ be the NET map of
Example 10.7 of \cite{cfpp}; its pullback map $\zs_f$ is constant.
The group $G_f$ contains a translation whose image under the modular
group virtual multi-endomorphism is elliptic.  So translations do not
always map to translations.
\end{remark}

We continue by considering other choices for the group $G$.  

Suppose
that $G$ is the pure modular group of the pair $(S^2,P)$.  So now the
elements of $G$ are isotopy classes of orientation-preserving
homeomorphisms $\zj\co S^2\to S^2$ which fix $P$ pointwise.  We
consider the analog of Proposition~\ref{prop:mod} in this situation.
Since $P$ pulls back to $\zL_2$ via $\zp_2$, the group $G$
pulls back to the elements of $\text{SAff}(2,\mathbb{Z})$ which fix
the elements of $\zL_2$ modulo $\zG_2$.  It follows that
$G\cong\overline{\zG}(2)=\zG(2)/\{\pm 1\}$, where $\zG(2)$ is the
subgroup of $\text{SL}(2,\mathbb{Z})$ consisting of those elements
which are congruent to 1 modulo 2.  We next consider
Proposition~\ref{prop:saff} in this situation.  In order for the map
$\zj$ in Figure~\ref{fig:lift} to be in the pure modular group, we
need the map $\zJ$ to be in the subgroup generated by $\zG(2)$ and
$2\zL_2$.  In order for $\widetilde{\zj}$ to be in the pure modular
group, we need $\zJ$ to be in $\text{SAff}_P(f)$, which consists of
those elements of $\text{SAff}(f)$ which not only map $\zD$ to itself,
but fix the elements of $\zD$ modulo $2\zL_1$.  So $G_f$ is isomorphic
to $\text{SAff}_P(f)\cap\overline{\zG}(2)$.  Using
Proposition~\ref{prop:deck}, we see that the subgroup of $G$ analogous
to $\text{DeckMod}(f)$ is trivial.  Because of this, we have a pure
modular group virtual endomorphism, a single valued group
homomorphism.

In this paragraph we briefly discuss the situation in which we allow
for reversal of orientation.  So now the elements of $G$ are isotopy
classes of homeomorphisms $\zj\co (S^2,P)\to (S^2,P)$ which need not
preserve orientation.  The analog of Proposition~\ref{prop:mod} is
that $G\cong \text{Aff}(2,\mathbb{Z})/\zG_2$, where
$\text{Aff}(2,\mathbb{Z})$ is defined in the straightforward way.
Similarly, Proposition~\ref{prop:saff} becomes $G_f\cong
\text{Aff}(f)/(\text{Aff}(f)\cap \zG_2)$.  Finally, we note that the
virtual multi-endomorphism is also meaningful in the present situation
and that Proposition~\ref{prop:deckmod} holds as stated in the present
situation.

\section{The extended modular group action on Teichm\"{u}ller space}
\label{sec:teich}\nosubsections

In this section we fix a NET map $f$ with postcritical set $P$.  Let
$G$ denote the extended modular group of the pair $(S^2,P)$.  As at
the beginning of Section~\ref{sec:mdrgp}, we represent elements $\zj$
of $G$ by affine isomorphisms $\zJ\co \mathbb{R}^2\to \mathbb{R}^2$
such that $\zJ(\mathbb{Z}^2)=\mathbb{Z}^2$.  The elements of $G$ act
on the Teichm\"{u}ller space of $(S^2,P)$ by pulling back complex structures on
$S^2\setminus P$.  We let $\zs_\zj$ denote the map on $\mathbb{H}$
induced by the Teichm\"{u}ller action of $\zj\in G$.  The next
proposition relates $\zJ$ and $\zs_\zj$.  It presents the content of
the second paragraph after Theorem 9.1 of \cite{cfpp}.

\begin{prop}\label{prop:teichmx} Let $\zj$ be an element of the
extended modular group which is represented by the affine map
$\zJ(x)=\left[\begin{smallmatrix}a & b \\ c & d
\end{smallmatrix}\right]x+t$ with $\left[\begin{smallmatrix}a & b \\
c & d \end{smallmatrix}\right]\in \text{GL}(2,\mathbb{Z})$ and $t\in
\mathbb{Z}^2$.  Then the Teichm\"{u}ller action of $\zj$ on
$\mathbb{H}$ is given by $\zs_\zj(z)=\frac{dz+b}{cz+a}$ if $\zj$
preserves orientation and $\zs_\zj(z)=\frac{d \overline{z}+b}{c
\overline{z}+a}$ if $\zj$ reverses orientation.
\end{prop}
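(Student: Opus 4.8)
The plan is to compute the Teichm\"uller action directly from the definitions recalled in \S\ref{sec:background}, tracking how an affine map $\zJ$ acts on marked lattices. Recall that a point $z \in \mathbb{H}$ corresponds to the marked complex structure given by the lattice $\zL_z = \mathbb{Z} + z\mathbb{Z}$ with ordered basis $(\zw_1,\zw_2) = (1,z)$, together with its induced marking on $(S^2,P)$. The map $\zs_\zj$ sends $z$ to the parameter of the structure obtained by pulling back the structure at $z$ along (a representative of) $\zj$. Concretely, since $\zj$ lifts to the affine map $\zJ(x) = Mx + t$ with $M = \left[\begin{smallmatrix} a & b \\ c & d\end{smallmatrix}\right]$, pulling back the lattice $\zL_z$ through $\zJ$ replaces the basis $(1,z)$ by its preimage under $M$, i.e. by the columns of $M^{-1}$ applied to $(1,z)$; equivalently, the new marked lattice is spanned (up to the homothety that renormalizes the first basis vector to $1$) by the pair obtained from $M^{-1}\binom{1}{0}$ and $M^{-1}\binom{z}{1}$ interpreted as complex numbers. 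The translation part $t$ acts trivially on complex structures (it is an isometry of any translation structure and preserves the marking up to isotopy rel $P$, as already noted for the translation subgroup in \S\ref{sec:mdrgp}), so $t$ drops out and only $M$ matters.

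First I would set up the orientation-preserving case. Writing a lattice basis as a column vector $\binom{\zw_1}{\zw_2}$ of complex numbers, precomposition of the uniformizing chart by $\zJ$ changes the basis by the matrix $M$ acting on $\binom{1}{z}$; since $\det M = 1$ here, the new basis is $\binom{a + bz}{c + dz}$, and the associated point of $\mathbb{H}$ is the ratio of the second coordinate to the first, namely $\frac{c + dz}{a + bz} = \frac{dz + c}{bz + a}$. One must be slightly careful about the direction of the action and about whether it is $M$ or $M^{-1}$, $M^{T}$, or their inverse transpose that appears; the correct normalization is forced by two sanity checks: the action must be a left action of $\text{PSL}(2,\mathbb{Z})$ on $\mathbb{H}$ (so the formula's matrix must multiply in the right order), and it must match the compatibility relation with $\mu_f$ recorded at the end of \S\ref{sec:background}, namely $\mu_f(p/q) = p'/q'$ iff $\zs_f(-q/p) = -q'/p'$, together with the fact that $\zs_\zj$ extends continuously to $\overline{\mathbb{Q}}$ by the same M\"obius formula. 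Reconciling the bookkeeping so that the entries land as $\zs_\zj(z) = \frac{dz + b}{cz + a}$ — rather than a transpose or inverse variant — is the one genuinely fiddly point; this is really the ``content of the second paragraph after Theorem 9.1 of \cite{cfpp}'' being invoked, so I would either cite that computation or reproduce it in a couple of lines.

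Finally, for the orientation-reversing case I would observe that if $\zj$ reverses orientation then $M \in \text{GL}(2,\mathbb{Z})$ has $\det M = -1$, and the pullback of the complex structure is the pullback along an anti-holomorphic map; equivalently, one first applies complex conjugation $z \mapsto \overline{z}$ (which realizes an orientation-reversing homeomorphism fixing $P$ and negates the sign of $\det$) and then an orientation-preserving affine map. Composing the two, the M\"obius formula from the first case applies to $\overline{z}$ in place of $z$, yielding $\zs_\zj(z) = \frac{d\overline{z} + b}{c\overline{z} + a}$. The main obstacle, as indicated above, is purely the index-juggling needed to pin down the exact placement of $a,b,c,d$ (and the vanishing of $t$); once the correct convention for how $\text{SAff}(2,\mathbb{Z})$ acts on marked lattices is fixed — consistently with Proposition~\ref{prop:mod} and with the $\mu_f$/$\zs_f$ relation — both formulas fall out immediately, and the proof reduces to a short verification.
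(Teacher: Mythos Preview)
The paper does not give a proof of this proposition; it simply states that the result ``presents the content of the second paragraph after Theorem 9.1 of \cite{cfpp}.'' Your approach---compute directly how the affine lift $\zJ$ transforms the marked lattice basis and read off the new $\tau$-parameter---is exactly the standard one and is presumably what is carried out in \cite{cfpp}. So in spirit your proposal matches what the paper is invoking.

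There is, however, a concrete slip in your intermediate computation that you should fix rather than defer. You write the new basis as $M\binom{1}{z}=\binom{a+bz}{c+dz}$, which yields the wrong placement $(dz+c)/(bz+a)$. The correct bookkeeping is as follows. The complex structure at $z$ identifies $(x,y)\in\mathbb{R}^2$ with $x+yz\in\mathbb{C}$. Pulling this back along $\zJ$ assigns to $(x,y)$ the complex number of its image, namely $(ax+by)+(cx+dy)z = x(a+cz)+y(b+dz)$ (the translation $t$ contributes a constant and is irrelevant). Thus in the pulled-back structure $e_1\mapsto a+cz$ and $e_2\mapsto b+dz$, so the new parameter is
\[
\frac{b+dz}{a+cz}=\frac{dz+b}{cz+a},
\]
which is exactly the stated formula. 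The point is that the entries of $M$ enter via its \emph{columns} $Me_1,Me_2$, not via $M$ applied to $\binom{1}{z}$; this is the transpose discrepancy you anticipated. With this correction your orientation-preserving case is complete, and your treatment of the orientation-reversing case (factor through conjugation, then apply the above) is correct as written.
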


\begin{cor}\label{cor:type} Every element $\zj$ of the extended modular
group which is not a translation has the same type as $\zs_\zj$
(elliptic, parabolic,$\ldots$).
\end{cor}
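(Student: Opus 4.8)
The plan is to deduce Corollary~\ref{cor:type} directly from Proposition~\ref{prop:teichmx} by comparing the action of $\zj$ on $\mathbb{H}$ with the action of its linear part $A=\left[\begin{smallmatrix}a & b \\ c & d\end{smallmatrix}\right]$ on the boundary $\mathbb{R}\cup\{\infty\}$, and observing that the classification into elliptic/parabolic/hyperbolic (and reflection/glide reflection in the orientation-reversing case) depends only on the conjugacy type of $A$ in $\text{PGL}(2,\mathbb{Z})$, which is exactly how the "type" of $\zj$ was defined in Section~\ref{sec:mdrgp}. Concretely, by Proposition~\ref{prop:teichmx}, when $\zj$ preserves orientation $\zs_\zj$ is the M\"obius transformation associated to the matrix $\left[\begin{smallmatrix}d & b \\ c & a\end{smallmatrix}\right]$; this matrix is the transpose-conjugate of $A$ by the involution $\left[\begin{smallmatrix}0 & 1 \\ 1 & 0\end{smallmatrix}\right]$ (equivalently, $A$ and the matrix defining $\zs_\zj$ are related by $M\mapsto (\det M)\, M^{-1}$ composed with a fixed conjugation, up to sign), hence has the same trace up to sign and the same determinant $\pm 1$. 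Therefore $\zs_\zj$ is elliptic, parabolic, or hyperbolic precisely when $A$ is, which by definition is precisely when $\zj$ is.

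The steps, in order, are as follows. First, recall the definition from Section~\ref{sec:mdrgp}: the type of a non-translation element $\zj\in G$ is by definition the type of its $\text{PSL}(2,\mathbb{Z})$-term, i.e.\ of the image of its linear part $A$ in $\text{PSL}(2,\mathbb{Z})$ (or the reflection/glide reflection dichotomy when $\det A=-1$), and this in turn is read off from $|\mathrm{tr}\,A|$ versus $2$ together with $\det A$. Second, apply Proposition~\ref{prop:teichmx} to write down $\zs_\zj$ explicitly: in the orientation-preserving case it is the M\"obius map of $B:=\left[\begin{smallmatrix}d & b \\ c & a\end{smallmatrix}\right]$, and in the orientation-reversing case it is the anti-M\"obius map $z\mapsto (d\overline z+b)/(c\overline z+a)$, again governed by $B$. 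Third, note $\det B=\det A$ and $\mathrm{tr}\,B=\mathrm{tr}\,A$, so $B$ and $A$ have the same Jordan/conjugacy type over $\mathbb{R}$; hence the classification of $\zs_\zj$ as an isometry of $\mathbb{H}$ (fixed-point structure on $\mathbb{H}\cup\partial\mathbb{H}$) coincides with the classification of the linear part of $\zj$, which is the type of $\zj$. Fourth, handle the orientation-reversing case: here $\det A=-1$, $\zj$ is a reflection or glide reflection, and $\zs_\zj$ is an orientation-reversing isometry of $\mathbb{H}$; its square $\zs_\zj^2=\zs_{\zj^2}$ is the M\"obius map of $B^2$, and since $\det B^2=1$ with $\mathrm{tr}\,B^2=\mathrm{tr}\,B^2$ equal to the corresponding invariant of $A^2$, one sees $\zs_\zj$ is a reflection (fixes a geodesic pointwise) exactly when $A^2$ is parabolic-or-identity type, i.e.\ when $\zj$ is a reflection, and a glide reflection otherwise.

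I do not expect a serious obstacle: this is essentially bookkeeping, the only mild subtlety being that the matrix appearing in $\zs_\zj$ is $\left[\begin{smallmatrix}d & b \\ c & a\end{smallmatrix}\right]$ rather than $A$ itself, so one must check (a one-line computation with the determinant and trace) that this swap does not change the type. A second, equally routine point is that "type" for $\zs_\zj$ must be interpreted via the standard dictionary between $2\times 2$ real matrices acting on $\mathbb{H}$ and the elliptic/parabolic/hyperbolic trichotomy, and between orientation-reversing M\"obius transformations and reflections/glide reflections; once that dictionary is invoked, equality of traces and determinants finishes the argument. Because translations have trivial linear part, the map $\zs_\zj$ is the identity for them (consistent with their being excluded from the statement), which is why the hypothesis "not a translation" appears.
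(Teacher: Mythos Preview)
Your proposal is correct and follows essentially the same approach as the paper: both reduce the claim to Proposition~\ref{prop:teichmx} and the observation that the matrix $\left[\begin{smallmatrix}d & b \\ c & a\end{smallmatrix}\right]$ has the same trace and determinant as $A=\left[\begin{smallmatrix}a & b \\ c & d\end{smallmatrix}\right]$, so the type is preserved. The paper's proof is a single sentence to this effect; your write-up simply unpacks the trace/determinant check and the orientation-reversing case in more detail than necessary.
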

  \begin{proof} Since the type of $\zj$ is determined by trace and
determinant, this follows from Proposition~\ref{prop:teichmx}.
\end{proof}

Let $\widetilde{\zj}$ denote the image of $\zj$ under the extended
modular group virtual multi-endomorphism of $f$ for every $\zj\in
G_f$.  In general, the assignment $\zj\mapsto \widetilde{\zj}$ is a
multifunction rather than a single-valued function.  However, we have
the following proposition.

\begin{prop}\label{prop:fn} $(1)$ The assignment $\zj\mapsto
\zs_{\widetilde{\zj}}$ for $\zj\in G_f$ is a function, even if the
extended modular group virtual multi-endomorphism is not.\\ 
$(2)$ The assignment $\zs_\zj\mapsto \zs_{\widetilde{\zj}}$ for
$\zj\in G_f$ is a group homomorphism.
\end{prop}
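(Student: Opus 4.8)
The plan is to reduce both statements to the explicit matrix formula in Proposition~\ref{prop:teichmx} together with the relation $\zj\circ f=f\circ\widetilde{\zj}$ and the action $\zs_f$ on Teichm\"uller space. First I would recall the naturality statement: lifting complex structures under $f$ commutes with the modular group actions in the sense that for $\zj\in G_f$ one has $\zs_{\zj}\circ\zs_f=\zs_f\circ\zs_{\widetilde{\zj}}$ on $\mathbb{H}$ (this is the defining property of the virtual multi-endomorphism at the level of Teichm\"uller space; it is exactly what was used in the proof of Proposition~\ref{prop:deckmod}(2)). The source of the multivaluedness is $\text{DeckMod}(f)$: two choices of lift $\widetilde{\zj}$ differ by an element $\zd\in\text{DeckMod}(f)$. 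By Proposition~\ref{prop:deck}(1), every element of $\text{DeckMod}(f)$ is a translation, hence by Proposition~\ref{prop:teichmx} acts trivially on $\mathbb{H}$, i.e. $\zs_{\zd}=\mathrm{id}$ for all $\zd\in\text{DeckMod}(f)$. Therefore $\zs_{\widetilde{\zj}\zd}=\zs_{\widetilde{\zj}}\zs_{\zd}=\zs_{\widetilde{\zj}}$, so $\zs_{\widetilde{\zj}}$ is independent of the choice of lift. That settles statement $(1)$.

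For statement $(2)$ I must first check that $\zs_\zj\mapsto\zs_{\widetilde\zj}$ is well-defined as a map on the image $\{\zs_\zj:\zj\in G_f\}\subseteq\mathrm{PSL}(2,\mathbb{R})$, not merely on $G_f$: if $\zj,\zj'\in G_f$ have $\zs_\zj=\zs_{\zj'}$, then $\zj$ and $\zj'$ differ by a translation $\zv$ (since by Corollary~\ref{cor:type} an element of $G$ with trivial Teichm\"uller action and nontrivial $\mathrm{PSL}(2,\mathbb{Z})$-part is impossible — the $\mathrm{PSL}(2,\mathbb{Z})$-part of $\zv$ would have to be the identity). Then one chooses lifts compatibly: $\widetilde{\zj'}=\widetilde{\zj}\,\widetilde{\zv}$ for a suitable lift $\widetilde{\zv}$ of $\zv$, and one needs $\zs_{\widetilde\zv}=\mathrm{id}$. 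This is where I would invoke Proposition~\ref{prop:deckmod}(2): if $\zs_f$ is not constant, a liftable translation lifts to a translation, so $\zs_{\widetilde\zv}=\mathrm{id}$; in the case $\zs_f$ constant, all of $\mathbb{H}$ is a single point as far as the relevant relation is concerned, and a short separate argument (or the observation that $\zs_{\widetilde\zv}$ fixes the image of the constant $\zs_f$, hence one may replace $\widetilde\zv$ by a translation lift which exists by the Deck-transformation analysis) handles it — I would want to word this carefully. Granting well-definedness, the homomorphism property is then immediate from the commuting square: given $\zj_1,\zj_2\in G_f$ with lifts $\widetilde{\zj_1},\widetilde{\zj_2}$, the composite $\widetilde{\zj_1}\,\widetilde{\zj_2}$ is a lift of $\zj_1\zj_2$, and applying $\zs_{(\cdot)}$, which is already known to be a group homomorphism $G\to\mathrm{PSL}(2,\mathbb{R})\rtimes\langle\mathrm{conj}\rangle$ on the whole extended modular group, gives $\zs_{\widetilde{\zj_1\zj_2}}=\zs_{\widetilde{\zj_1}}\zs_{\widetilde{\zj_2}}$.

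The main obstacle I anticipate is the well-definedness issue underlying statement $(2)$, specifically handling the degenerate case when $\zs_f$ is constant: there Proposition~\ref{prop:deckmod}(2) does not apply, and Remark~\ref{remark:tlns} explicitly warns that a translation in $G_f$ can lift to an elliptic element. However, in that degenerate case $\zs_f$ is constant, so $\zs_{\widetilde{\zj}}$ is forced to fix the single point $\zs_f(\mathbb{H})$ for every $\zj\in G_f$, and the only way the assignment could fail to be a function is if two translations differing by something with trivial Teichm\"uller action had lifts with different Teichm\"uller actions — but any such lift fixes $\zs_f(\mathbb{H})$, and the ambiguity is precisely by $\text{DeckMod}(f)$, which acts trivially. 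So I would argue that even when $\zs_f$ is constant the conclusion holds, because the ambiguity in the lift is always exactly $\text{DeckMod}(f)$ and that always acts trivially on $\mathbb{H}$; the phenomenon of Remark~\ref{remark:tlns} is a statement about which \emph{single} elliptic element appears, not about ambiguity of $\zs_{\widetilde\zj}$. I would double-check this reasoning against the $\zs_f$-constant example before finalizing, since it is the subtle point of the whole proposition.
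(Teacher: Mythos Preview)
For statement~(1) your argument coincides with the paper's: the indeterminacy in $\widetilde{\zj}$ is a coset of $\text{DeckMod}(f)$, whose elements are translations by Proposition~\ref{prop:deck}(1) and hence act trivially on $\bbH$.

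For statement~(2) the paper's argument is shorter and more structural than yours: it simply notes that $\zj\mapsto\zs_\zj$ factors through an injective antihomomorphism $G/T\hookrightarrow\text{PGL}(2,\bbZ)$ (with $T$ the translation subgroup) and that passing to $G/T$ also kills the $\text{DeckMod}(f)$-indeterminacy, and concludes. You are right, however, to isolate the well-definedness issue: to pass from the function $\zj\mapsto\zs_{\widetilde\zj}$ to a function $\zs_\zj\mapsto\zs_{\widetilde\zj}$ one must know that every liftable translation $\zv\in G_f\cap T$ satisfies $\zs_{\widetilde\zv}=\mathrm{id}$. In the nonconstant case you correctly invoke Proposition~\ref{prop:deckmod}(2). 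Your patch for the constant case does not work, and it cannot be repaired: take $f$ as in Remark~\ref{remark:tlns}, where $\zs_f$ is constant and some liftable translation $\zv$ has $\widetilde\zv$ elliptic. Then $\zs_{\mathrm{id}}=\zs_\zv=\mathrm{id}$, while $\zs_{\widetilde{\mathrm{id}}}=\mathrm{id}$ (every lift of the identity lies in $\text{DeckMod}(f)$) but $\zs_{\widetilde\zv}$ is a nontrivial elliptic M\"obius transformation, so the assignment $\zs_\zj\mapsto\zs_{\widetilde\zj}$ is not even a function in this example. Your closing sentence, ``the ambiguity is precisely $\text{DeckMod}(f)$,'' addresses only the ambiguity of $\widetilde\zj$ for a \emph{fixed} $\zj$---which is statement~(1)---and says nothing about comparing two distinct $\zj,\zj'$ with $\zs_\zj=\zs_{\zj'}$. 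The paper's proof glosses over exactly this point; in the downstream applications (Theorem~\ref{thm:viredmpp}) what is actually used is just the multiplicativity $\zs_{\widetilde{\zj_1\zj_2}}=\zs_{\widetilde{\zj_2}}\circ\zs_{\widetilde{\zj_1}}$ together with $\zs_{\widetilde{\mathrm{id}}}=\mathrm{id}$, both of which already follow from statement~(1).
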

  \begin{proof} We begin with statement 1.  The discussion of the
modular group virtual multi-endomorphism following
Proposition~\ref{prop:deck} shows that in general the extended modular
group virtual multi-endomorphism is a multifunction which assigns the
elements of a coset of $\text{DeckMod}(f)$ to an element of the
extended modular group.  Statement 1 of Proposition~\ref{prop:deck}
shows that the elements of $\text{DeckMod}(f)$ are translations.  So
they act trivially on $\mathbb{H}$.  Thus the assignment $\zj\mapsto
\zs_{\widetilde{\zj}}$ is a function.  This proves statement 1.

Now we consider statement 2.  Let $T$ be the normal subgroup of $G$
which consists of all translations.  Proposition~\ref{prop:teichmx}
shows that the map $\zj\mapsto \zs_\zj$ induces an injective group
antihomomorphism\footnote{$\sigma_{\psi_2 \circ \psi_1}=\sigma_{\psi_1} \circ \sigma_{\psi_2}$} from $G/T$ to $\text{PGL}(2,\mathbb{Z})$.  As we saw
above, the map from $G$ to $G/T$ eliminates the indeterminacy of the
modular group virtual multi-endomorphism.  As a result, the map
$\zs_\zj\mapsto \zs_{\widetilde{\zj}}$ for $\zj\in G_f$ is a group
homomorphism.

This proves Proposition~\ref{prop:fn}.
\end{proof}

The next theorem provides information about $\zs_{\widetilde{\zj}}$ in
terms of $\zs_\zj$.  We prepare for it with a definition.  Under the
standard action on the upper half-plane, the subgroup of
$\text{PSL}(2,\bbZ)$ which fixes an extended rational number $r$ is a
parabolic subgroup.  This subgroup is infinite cyclic with a positive
generator, that is, a generator which fixes $r$ and moves every other
element of $\bbR\cup \{\infty \}$ in the counterclockwise direction
relative to $\bbH$.

\begin{thm}\label{thm:viredmpp} Let $\zs_f\co \mathbb{H}\to
\mathbb{H}$ be the Thurston pullback map of $f$.  Suppose that $\zj\in
G_f$.  Then the following statements hold.
\begin{enumerate}
  \item If $\zs_\zj$ is elliptic, then $\zs_{\widetilde{\zj}}$ is
either trivial or elliptic.
  \item Suppose that $\zs_\zj$ is parabolic.  Let $s$ be the slope
such that $\zs_\zj$ fixes $-1/s$.  Let $m$ be the multiplier of $s$.
\begin{enumerate}
  \item If $m=0$, then $\zs_{\widetilde{\zj}}$ is either trivial or
elliptic.
  \item Suppose that $m\ne 0$, that $\zs_\zj$ is the $n$th power of
the positive generator of the parabolic subgroup of
$\text{PSL}(2,\bbZ)$ which fixes $-1/s$ and that the slope function of
$f$ maps slope $s$ to slope $t$.  Then $\zs_{\widetilde{\zj}}$ is the
$(mn)$-th power of the positive generator of the parabolic subgroup of
$\text{PSL}(2,\bbZ)$ which fixes $-1/t$.
\end{enumerate}
  \item Suppose that $\zs_\zj$ is hyperbolic.  If the orbifold of $f$
is hyperbolic, then the translation length of $\zs_{\widetilde{\zj}}$
is less than the translation length of $\zs_\zj$ and the absolute
value of the trace of $\zs_{\widetilde{\zj}}$ is less than the
absolute value of the trace of $\zs_\zj$.
  \item If $\zs_\zj$ is a reflection, then $\zs_{\widetilde{\zj}}$ is a
reflection.
  \item If $\zs_\zj$ is a glide reflection, then
$\zs_{\widetilde{\zj}}$ is either a reflection or a glide reflection.
If $\zs_{\widetilde{\zj}}$ is a glide reflection and the orbifold of
$f$ is hyperbolic, then the translation length of
$\zs_{\widetilde{\zj}}$ is less than the translation length of
$\zs_\zj$, and the absolute value of the trace of
$\zs_{\widetilde{\zj}}$ is less than the absolute value of the trace
of $\zs_\zj$.
\end{enumerate}
\end{thm}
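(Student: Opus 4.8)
The plan is to exploit the two structural facts already in hand: the correspondence $\zs_\zj \mapsto \zs_{\widetilde\zj}$ is a genuine group homomorphism (Proposition~\ref{prop:fn}), and $\zs_\zj$ and $\zs_{\widetilde\zj}$ are literally related by the analytic map $\zs_f$ via the intertwining identity $\zs_f \circ \zs_\zj = \zs_{\widetilde\zj}\circ \zs_f$ obtained by applying the Teichm\"uller functor to the commuting square in Figure~\ref{fig:lift}. Because $\zs_f\co \bbH\to\bbH$ is holomorphic (hence $1$-Lipschitz for the hyperbolic metric, and strictly contracting unless it is a Riemann-surface covering onto $\bbH$), the identity forces $\zs_{\widetilde\zj}$ to be ``no larger'' than $\zs_\zj$ in the appropriate sense; this is the engine behind every inequality in the statement. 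I would first isolate this Schwarz-Pick contraction principle as the backbone, noting that when the orbifold of $f$ is hyperbolic $\zs_f$ is not a covering of $\bbH$ and so is strictly contracting on hyperbolic distances.

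For statement (1), since $\zs_\zj$ elliptic means it is a rotation about some $z_0\in\bbH$, of finite order, the homomorphism property forces $\zs_{\widetilde\zj}$ to have finite order in $\mathrm{PSL}(2,\bbZ)$ (respectively $\mathrm{PGL}(2,\bbZ)$), hence it is either trivial or elliptic; alternatively, the intertwining relation shows $\zs_{\widetilde\zj}$ fixes $\zs_f(z_0)$, and a nontrivial M\"obius transformation of $\bbH$ fixing an interior point is elliptic. For statement (2), I would use the Weil-Petersson boundary extension recalled in Section~\ref{sec:background}: $\zs_f$ sends the boundary point $-1/s$ to $-1/t$ where $t=\mu_f(s)$, and near a parabolic fixed point the ``speed'' of $\zs_f$ along the boundary is governed by the multiplier $m=c/d$ — here the translation-number computation is exactly the statement $\mu_f(p/q)=p'/q' \iff \zs_f(-q/p)=-q'/p'$ together with Theorem~4.1 of \cite{cfpp} giving the common local degree $d$. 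When $m\ne 0$, a positive parabolic generator fixing $-1/s$ conjugates under $\zs_f$, up to the boundary reparametrization, to the $m$-th power of a positive parabolic generator fixing $-1/t$; raising to the $n$-th power gives the $(mn)$-th power, which is the assertion of (2b). When $m=0$, the curve $s$ is mapped to inessential/peripheral curves, so the corresponding parabolic direction is ``killed'' and $\zs_{\widetilde\zj}$ must fix an interior point or be trivial, giving (2a) by the argument of (1). Statement (3) is the direct Schwarz-Pick argument: a hyperbolic $\zs_\zj$ has translation length equal to the infimum of hyperbolic displacement along its axis; conjugating by the strict contraction $\zs_f$ shows $\zs_{\widetilde\zj}$ moves points of $\zs_f(\bbH)$ by strictly less, so its translation length strictly drops, and since trace and translation length in $\mathrm{PSL}(2,\bbR)$ are monotonically related ($2\cosh(\ell/2)=|\mathrm{tr}|$), the trace bound follows.

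For the reflection cases (4) and (5): an orientation-reversing $\zj$ has $\zs_\zj$ an anti-M\"obius map, and $\zs_f$ being holomorphic means the intertwining relation is compatible with the orientation type; a reflection $\zs_\zj$ (an involution fixing a geodesic) forces $\zs_{\widetilde\zj}^2 = \zs_{\widetilde{\zj^2}}$ to come from a translation (since $\zj^2$ is liftable with $\zs_{\zj^2}=1$), hence $\zs_{\widetilde\zj}$ is an orientation-reversing involution, i.e. a reflection — this uses Proposition~\ref{prop:deckmod}(1), that $\mathrm{DeckMod}(f)$ consists of translations acting trivially on $\bbH$. For (5), a glide reflection $\zs_\zj$ has $\zs_\zj^2$ hyperbolic; then $\zs_{\widetilde\zj}^2$ is either trivial (forcing $\zs_{\widetilde\zj}$ a reflection) or hyperbolic (forcing $\zs_{\widetilde\zj}$ a glide reflection), and in the latter case the inequality on translation lengths and traces transfers from the square via the hyperbolic case (3), since the translation length of a glide reflection is half that of its square.

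The main obstacle I anticipate is statement (2b): pinning down the exact exponent $mn$ rather than merely ``some positive parabolic power.'' This requires carefully matching the local parametrization of $\zs_f$ at the boundary fixed point $-1/s$ with the combinatorial multiplier $m=c/d$, where $c$ counts essential non-peripheral preimage components of the curve of slope $s$ and $d$ is their common local degree (Theorem~4.1 of \cite{cfpp}). The clean way to see this is to pass to the upper-half-plane picture where a curve of slope $s$ corresponds to the cusp $-1/s$, lift $\zs_f$ to a map of horoball neighborhoods, and observe that the induced map on the boundary circle near the cusp wraps $d$ times around while $c$ copies are present — so the parabolic holonomy gets multiplied by $c/d$ on the nose. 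Verifying that this is a genuine equality of parabolic powers (not just asymptotics) is where the real work lies, and I would lean on the explicit description of $\zs_f$ near cusps in terms of the slope function from \cite[\S 5, \S 9]{cfpp}.
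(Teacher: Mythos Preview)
Your overall scaffold matches the paper's: statements (1), (3), (4), (5) all flow from the homomorphism property of Proposition~\ref{prop:fn} together with the intertwining relation $\zs_f\circ\zs_\zj=\zs_{\widetilde{\zj}}\circ\zs_f$, and your Schwarz--Pick displacement estimate for (3) is exactly what the paper does. Two points of divergence are worth flagging.

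For statement (2), the paper sidesteps your horoball and boundary-wrapping analysis entirely. Instead it passes to a power: since the pure modular group has finite index in $G$, some $\zj^k$ lies in it and is therefore a power of a Dehn twist about a curve of slope $s$. The proof of Theorem~7.1 of \cite{cfpp} already computes the image of such a Dehn twist under the virtual endomorphism exactly---it sends $\zs_{\zj^{dk}}$ to the $(ckn)$-th power of the positive parabolic generator at $-1/t$, where $m=c/d$. One then recovers $\zs_{\widetilde{\zj}}$ by taking $(dk)$-th roots inside the infinite cyclic parabolic subgroup, which is unambiguous. This bypasses the delicate local analysis you flag as the main obstacle; the ``real work'' you anticipate is already packaged in \cite[Theorem~7.1]{cfpp}, and your route would effectively re-derive that result from the boundary geometry of $\zs_f$. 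For (2a) the paper's argument is likewise crisper than ``the parabolic direction is killed'': when $c=0$ the same computation gives $\zs_{\widetilde{\zj}}^{\,dk}=1$, so $\zs_{\widetilde{\zj}}$ has finite order and is trivial or elliptic.

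For statement (3), you assert that $\zs_f$ is strictly contracting when the orbifold is hyperbolic but do not justify it. The paper supplies a specific reason: by Theorem~6.3 of \cite{kps} the kernel of the virtual multi-endomorphism contains a hyperbolic element, so $\zs_f$ is not injective and hence not an isometry of $\bbH$. Your appeal to Schwarz--Pick is correct once this is in hand, but the failure of injectivity is the substantive input and is not automatic from the hyperbolic-orbifold hypothesis alone.
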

  \begin{proof} Statement 2 of Proposition~\ref{prop:fn} shows that
assignment $\zs_\zj\mapsto \zs_{\widetilde{\zj}}$ is a group
homomorphism.  Hence statement 1 is clear.

To prove statement 2, suppose that $\zs_\zj$ is parabolic.  Let $s$
and $m$ be as in statement 2.  Since the pure modular group has finite
index in the modular group, there exists a positive integer $k$ such
that $\zj^k$ is in the pure modular group.  It therefore is a Dehn
twist about a simple closed curve with slope $s$.  Let
$m=\frac{c}{d}$, as in Section~\ref{sec:background}.  The proof of
Theorem 7.1 of \cite{cfpp} shows that the modular group virtual
endomorphism maps $\zs_{\zj^{dk}}$ to the positive generator of the
parabolic subgroup of $\text{PSL}(2,\bbZ)$ which fixes $-1/t$ raised
to the power $ckn$.  Hence if $c=0$, then $\zs_{\widetilde{\zj}}$ is
either trivial or elliptic.  If $c\ne 0$, then we conclude the proof
of statement 2 by taking $(dk)$-th roots.

To prove statement 3, suppose that both $\zs_\zj$ and the orbifold of
$f$ are hyperbolic.  We prove statement 3 by adapting the proof of
Proposition 6.5 of \cite{kps} to the present situation.  Because the
orbifold of $f$ is hyperbolic, Theorem 6.3 of \cite{kps} implies that
the modular group virtual multi-endomorphism of $f$ is not injective.
In fact, its kernel contains a hyperbolic element.  Hence $\zs_f$ is
not injective.  Not being an isometry, it strictly decreases
hyperbolic distances.  Let $d\co \mathbb{H}\times \mathbb{H}\to
\mathbb{R}$ denote a hyperbolic metric on $\mathbb{H}$.  Among all
elements $z\in \mathbb{H}$, the minimum value of $d(z,\zs_\zj(z))$ is
achieved exactly for the elements on the translation axis of
$\zs_\zj$.  It is the translation length of $\zs_\zj$.  Let $z$ be an
element of the translation axis of $\zs_\zj$, and let $w=\zs_f(z)$.
Then
  \begin{equation*}
d(w,\zs_{\widetilde{\zj}}(w))=
d(\zs_f(z),\zs_{\widetilde{\zj}}(\zs_f(z)))=
d(\zs_f(z),\zs_f(\zs_\zj(z)))<d(z,\zs_\zj(z)).
  \end{equation*}
Thus the translation length of $\zs_{\widetilde{\zj}}$ is less than
the translation length of $\zs_\zj$.

We next prove that the absolute value of the trace of
$\zs_{\widetilde{\zj}}$ is less than the absolute value of the trace
of $\zs_\zj$.  This is clear if $\zs_{\widetilde{\zj}}$ is not
hyperbolic, so we assume that $\zs_{\widetilde{\zj}}$ is hyperbolic.
We use the fact that, up to a global multiplicative constant, the
translation length of a hyperbolic element $\zg$ of
$\text{SL}(2,\mathbb{Z})$ is $\ln(\left|\zl\right|)$, where $\zl$ is
the eigenvalue of $\zg$ with larger absolute value.  The absolute
value of the trace of $\zg$ is $\left|\zl+\zl^{-1}\right|\in
\mathbb{Z}$.  So decreasing $\left|\zl\right|$ decreases the absolute
value of the trace of $\zg$.  It follows that the absolute value of
the trace of $\zs_{\widetilde{\zj}}$ is less than the absolute value
of the trace of $\zs_\zj$.  This proves statement 3.

To prove statement 4, note that if $\zs_\zj$ is a reflection, then the
equation $\zs_f\circ \zs_\zj=\zs_{ \widetilde{\zj}}\circ \zs_f$
implies that $\zs_{\widetilde{\zj}}$ reverses orientation.  Statement
2 of Proposition~\ref{prop:fn} implies that the assignment
$\zs_\zj\mapsto \zs_{\widetilde{\zj}}$ is a group homomorphism.  So
$\zs_{\widetilde{\zj}}$ is an involution which reverses orientation.  It
must be a reflection.

Statement 5 can be proved much as statements 3 and 4 were proved.

This completes the proof of Theorem~\ref{thm:viredmpp}.
\end{proof}

As a nice application of the material in this section and the previous
one, we prove the following theorem.  This theorem easily generalizes
to all Thurston maps with four postcritical points. It complements
\cite[Theorem 9.2]{kps}, which treats only twisting by full Dehn
twists.

\begin{thm}\label{thm:twistobstrn} Let $f$ be a non-Euclidean NET map with an
obstruction whose multiplier is 1.  Let $\zw$ be an element in the
modular group of $f$ which together with the translations generates
the stabilizer of the obstruction of $f$.  Then the homotopy classes
$[f]\zw^n$ are mutually Thurston inequivalent for $n\in \bbZ$.
\end{thm}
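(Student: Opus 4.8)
The plan is to argue by contradiction: suppose $[f]\zw^m$ and $[f]\zw^n$ are Thurston equivalent for some $m\neq n$. Replacing $\zw$ by a power and using that $\zw$ together with the translations generates the full stabilizer $\zG_f$ of the obstruction $\zg$ of slope $s$, I would first reduce to understanding the structure of $\zG_f$ and its image under the virtual multi-endomorphism. Since the obstruction has multiplier $1$, Theorem~\ref{thm:viredmpp}(2b) tells us that if $\zs_\zj$ is the $k$th power of the positive generator of the parabolic subgroup fixing $-1/s$ (a Dehn twist power, up to translations), then $\zs_{\widetilde{\zj}}$ is the $(1\cdot k)=k$th power of the positive generator of the parabolic fixing $-1/t$, where $t=\mu_f(s)$. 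The key point — which I expect follows from $f$ being non-Euclidean and having an obstruction, hence $s$ being a fixed point of the slope function with multiplier $1$ — is that $t=s$, so $\zw$ (or an appropriate power) lands on a conjugate-or-equal parabolic of the \emph{same} slope with the \emph{same} twisting exponent. In other words, up to the translation indeterminacy, $\widetilde{\zw}$ and $\zw$ have the same image in $G/T$.

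Next I would translate Thurston equivalence of $[f]\zw^m$ and $[f]\zw^n$ into an equation in the (extended) modular group. Thurston equivalence of $[f]\zw^m$ and $[f]\zw^n$ means there is $\za\in G$ with $\za [f]\zw^m = [f]\zw^n \za$, i.e. $[f]\zw^m = \za^{-1}[f]\zw^n\za$. Using the commutation relation $\zb[f] = [f]\widetilde{\zb}$ valid for $\zb\in G_f$ (possibly after replacing $\za$ by a power to make it liftable — here I would invoke Proposition~\ref{prop:gamman} to ensure a power of $\za$ lies in $G_f$, and Theorem~\ref{thm:viredmpp}(1)--(3) to control the type of $\widetilde{\za}$), I would push $\za$ across $[f]$ and collect terms to obtain a relation of the form $\zw^{m-n}\cdot(\text{conjugate of }\widetilde{\za}\text{-type correction}) = (\text{deck/translation ambiguity})$. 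The cleanest way to kill the ambiguity is to pass to $\zs_\zj\mapsto \zs_{\widetilde{\zj}}$, which by Proposition~\ref{prop:fn} is an honest group homomorphism; applying $\zs_{(-)}$ to the Thurston-equivalence equation and using $\zs_f\circ\zs_\zj=\zs_{\widetilde{\zj}}\circ\zs_f$ repeatedly gives an equation purely among isometries of $\mathbb{H}$.

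From there the contradiction is geometric. On $\mathbb{H}$, $\zs_\zw$ is a nontrivial parabolic fixing $-1/s$, and $\zs_{\zw^{m-n}}=\zs_\zw^{\,m-n}$ is a nontrivial parabolic for $m\neq n$. The transformed equation says this parabolic equals (a conjugate of) $\zs_{\widetilde{\za}}^{-1}\zs_{\widetilde{\za}}$-type products which, after the dust settles, must also be a parabolic fixing the \emph{same} point $-1/s$ but arising from $[f]$'s pullback — and the point is that $\zs_f$ restricted near the ideal point $-1/s$ is, by the multiplier-$1$ hypothesis, tangent to the identity there, so it cannot absorb a nonzero power of the parabolic: a nonzero twisting amount is a genuine invariant. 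Concretely, I would compare the two sides' behavior at the fixed point $-1/s\in\partial\mathbb{H}$, using that $\zs_f$ fixes $-1/s$ (since $\mu_f(s)=s$) with a well-defined "derivative" $1$ there coming from multiplier $1$; this forces $m-n=0$, the desired contradiction.

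The main obstacle I anticipate is bookkeeping the translation/deck ambiguity and the liftability passage: $\za$ itself need not lie in $G_f$, so one must replace it by a power, which changes the exponent relation, and then one must check that the parabolic-counting survives this — this is exactly where the "multiplier $1$" hypothesis is doing its essential work (for any other multiplier the exponents would scale and one could plausibly have $\zw^k$ and the identity identified after enough iteration, as Remark~\ref{remark:tlns} warns in the constant-$\zs_f$ case, which is excluded here since $f$ is non-Euclidean with a hyperbolic-type obstruction). I would handle this by working throughout in $G/T$, where by Proposition~\ref{prop:fn}(2) the virtual multi-endomorphism descends to the genuine homomorphism $\zs_\zj\mapsto\zs_{\widetilde{\zj}}$, so that the exponent of the parabolic $\zs_\zw$ is tracked by an honest $\mathbb{Z}$-valued "twisting number" that the homomorphism multiplies by the multiplier $1$, hence preserves — making $[f]\zw^m\simeq[f]\zw^n$ impossible unless $m=n$.
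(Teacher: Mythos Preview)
Your proposal has a genuine gap: you never use the key observation that the conjugating element must itself stabilize the obstruction. In the paper's argument, one writes the Thurston equivalence as $\zv[f]\zw^m\zv^{-1}=[f]\zw^n$ and then notes that conjugation carries the obstruction of $[f]\zw^m$ to that of $[f]\zw^n$. Since $\zw$ stabilizes the obstruction $\zg$, both $[f]\zw^m$ and $[f]\zw^n$ have $\zg$ as their obstruction; hence $\zv$ fixes $\zg$. By hypothesis this forces $\zv=\zw^k\cdot(\text{translation})$, so $\zv$ and $\zw$ commute modulo translations. The same equation, rewritten as $\zv[f]=[f]\zw^n\zv\zw^{-m}$, shows $\zv\in G_f$ directly---no passage to powers via Proposition~\ref{prop:gamman} is needed. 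Now $\zs_\zv$ is a power of the parabolic $\zs_\zw$ fixing $-1/s$, and since $\mu_f(s)=s$ (this is what it \emph{means} for $\zg$ to be an obstruction; your hedging here is unwarranted) and the multiplier is $1$, Theorem~\ref{thm:viredmpp}(2b) gives $\widetilde{\zv}=\zv\cdot(\text{translation})$. Everything collapses to $\zw^{m-n}\cdot(\text{translation})\in\text{DeckMod}(f)$, and since $\text{DeckMod}(f)$ consists of translations, $m=n$.

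Without knowing that $\za$ fixes $\zg$, your detour through $G/T$ and parabolic exponents on $\mathbb{H}$ does not close. You would need to control $\zs_{\widetilde{\za}}$ for an \emph{arbitrary} liftable $\za$, but Theorem~\ref{thm:viredmpp} gives you nothing useful unless you already know $\zs_\za$ is parabolic at $-1/s$. The phrases ``$\zs_{\widetilde{\za}}^{-1}\zs_{\widetilde{\za}}$-type products'' and ``after the dust settles'' mark exactly where the argument stalls: there is no mechanism for a general $\za$ to produce a well-defined $\mathbb{Z}$-valued twisting invariant at $-1/s$ that survives the virtual endomorphism. The appeal to a ``derivative $1$'' of $\zs_f$ at the ideal point is suggestive but not an argument---$\zs_f$ is not a M\"obius map, and you have not established any boundary regularity beyond the slope-function statement. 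The missing step is short and decisive: observe that $\za$ must lie in the stabilizer of $\zg$, and the rest is bookkeeping.
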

  \begin{proof} Let $m$ and $n$ be integers such that $[f]\zw^m$ and
$[f]\zw^n$ are Thurston equivalent.  Then there exists an element
$\zv$ in the modular group of $f$ such that $\zv[f]\zw^m
\zv^{-1}=[f]\zw^n$.  Now we use the fact that if $f_1$ and $f_2$ are
obstructed NET maps with $f_2=\zj f_1\zj^{-1}$, where $\zj$ is a
homeomorphism representing an element of the modular group of $f_1$,
then $\zj$ maps the obstruction of $f_1$ to the obstruction of $f_2$.
Hence $\zv$ fixes the obstruction of $f$.  This and the assumptions
imply that $\zv$ is a power of $\zw$ up to translations, and so $\zv$
and $\zw$ commute up to translations.  Next we note that the equation
$\zv[f]\zw^m \zv^{-1}=[f]\zw^n$ implies that $\zv$ is liftable
relative to $f$.  So $[f]\widetilde{\zv}\zw^m \zv^{-1}\zw^{-n}=[f]$,
where $\widetilde{\zv}$ is one modular group element in the image of
$\zv$ under the modular group virtual multi-endomorphism of $f$.
Because the obstruction of $f$ has multiplier 1, statement 2 of
Theorem~\ref{thm:viredmpp} implies that $\widetilde{\zv}=\zv \zt $ for
some translation $\zt$ .  Hence $[f]\zw^{m-n}\zt'=[f]$ for some
translation $\zt'$.  Hence $\zw^{m-n}\zt'\in \text{DeckMod}(f)$.  But
Proposition~\ref{prop:deck} implies that $\text{DeckMod}(f)$ consists
of translations.  Thus $m=n$, which proves
Theorem~\ref{thm:twistobstrn}.
\end{proof}

\section{Hurwitz classes and actions of modular groups }
\label{sec:hurwitza}\nosubsections

This section introduces Hurwitz classes of NET maps and considers
related actions of modular groups.  There are various definitions of
Hurwitz equivalence in the literature.  We give definitions which
are suitable for our study of NET maps.  

We first recall some definitions from the introduction, and give some related terminology. Let $f\co S^2\to S^2$ and $f'\co S^2\to S^2$ be NET maps with
postcritical sets $P$ and $P'$.  Let $\zv,\zj\co (S^2,P)\to (S^2,P')$
be orientation-preserving homeomorphisms such that $\zv\circ f=f'\circ
\zj$.  In this situation we say that $f$ and $f'$ are \emph{Hurwitz
equivalent for the modular group}.  This defines an equivalence
relation on the set of NET maps.  The equivalence classes of this
equivalence relation are called \emph{modular group Hurwitz
classes}.  We denote the modular group Hurwitz class of $f$ by
$\text{Hurw}(f)$.  If in addition $\zv$ and $\zj$ agree on $P$, then
we say that $f$ and $f'$ are \emph{Hurwitz equivalent for the pure
modular group}.  The equivalence classes of this equivalence relation
are called \emph{pure modular group Hurwitz classes}.  We denote the
pure modular group Hurwitz class of $f$ by $\text{PHurw}(f)$.

We let $\text{Hurw}_P(f)$ denote the subset of $\text{Hurw}(f)$
consisting of those maps $f'$ with $P'=P$.  The group
$\text{Homeo}(S^2,P)$ of all orientation-preserving homeomorphisms
from $S^2$ to $S^2$ which map $P$ to itself acts on $\text{Hurw}_P(f)$
both on the left and the right.  So does the subgroup
$\text{Homeo}_0(S^2,P)$ consisting of those elements which are
isotopic rel $P$ to the identity map.  We let
  \begin{equation*}
\text{HurwMod}_P(f)=\text{Homeo}_0(S^2,P)\backslash\text{Hurw}_P(f)/
\text{Homeo}_0(S^2,P).
  \end{equation*}
We let $[f]$, the isotopy class of $f$, denote the element of
$\text{HurwMod}_P(f)$ represented by $f$.  Similarly,
  \begin{equation*}
\text{PHurwMod}_P(f)=\text{Homeo}_0(S^2,P)\backslash\text{PHurw}_P(f)/
\text{Homeo}_0(S^2,P).
  \end{equation*}

The modular group $\text{Mod}(S^2,P)$ acts on $\text{HurwMod}_P(f)$
both on the left and the right by post- and pre-composition,
respectively.  Equipped with these commuting actions,
$\text{HurwMod}_P(f)$ is a $\text{Mod}(S^2,P)$-biset, in the sense of
\cite{N}\footnote{The term `permutational bimodule' is used there
instead of `biset'.}.  Since the right action need not be free, the
algebraic structure is more complicated.  Thus we have a fundamental
tension: pure modular group Hurwitz classes behave better
algebraically, while modular group Hurwitz classes are in some sense
more fundamental.

Just as for NET maps, we let $[\zv]$ denote the isotopy class rel $P$
of a homeomorphism $\zv\co (S^2,P)\to (S^2,P)$.  We define the left
and right stabilizers of $[f]$ in $\text{Mod}(S^2,P)$ in the
straightforward way.  Similarly, the pure modular group
$\text{PMod}(S^2,P)$ acts on $\text{PHurwMod}_P(f)$ both on the left
and the right.  We define left and right stabilizers of $[f]$ in
$\text{PMod}(S^2,P)$ as before.

There is a modular group virtual multi-endomorphism of
$\text{Mod}(S^2,P)$ and a pure modular group virtual endomorphism of
$\text{PMod}(S^2,P)$.  By the kernel of the modular group virtual
multi-endomorphism we mean the set of all isotopy classes in
$\text{Mod}(S^2,P)$ for which some image under the modular group
virtual multi-endomorphism is trivial.

The following theorem presents some results concerning the action of
the modular group on $\text{HurwMod}_P(f)$.  Essentially the same
results hold for pure modular Hurwitz classes.  We discuss this very
briefly after the proof.

\begin{thm}\label{thm:action} Let $f$ be a NET map with postcritical
set $P$.  Let $G=\text{Mod}(S^2,P)$, and let $G_f$ be the group of
liftables in $G$ as usual.
\begin{enumerate}
  \item The left stabilizer of $[f]$ in $G$ is the kernel of the
modular group virtual multi-endomorphism.
  \item The right stabilizer of $[f]$ in $G$ is $\text{DeckMod}(f)$.
  \item If $\zg\in G_f$, then $\zg[f]=[f]\widetilde{\zg}$, where
$\widetilde{\zg}$ is any image of $\zg$ under the modular group
virtual multi-endomorphism.
  \item Let $\zj_1,\dotsc,\zj_k$ form a left transversal for $G_f$ in
$G$, and let $\zg_1,\zg_2\in G$.  Then $\zj_i[f]\zg_1=\zj_j[f]\zg_2$ for
some $i,j\in \{1,\dotsc,k\}$ if and only if $i=j$ and
$\zg_2\zg_1^{-1}\in \text{DeckMod}(f)$.
  \item The right action of $G$ on $\text{HurwMod}_P(f)$ has finitely
many orbits.
  \item Let $\zv,\zj\co (S^2,P)\to (S^2,P)$ be orientation-preserving
homeomorphisms.  Then $f\circ \zv$ is Thurston equivalent to $f\circ
\zj$ if and only if $[\zv]=\widetilde{\zg}[\zj]\zg^{-1}$ for some
$\zg\in G_f$ and some image $\widetilde{\zg}$ of $\zg$ under the
modular group virtual multi-endomorphism.
\end{enumerate}
\end{thm}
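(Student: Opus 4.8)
The plan is to treat the six statements of Theorem~\ref{thm:action} not in isolation but as a coherent package built on the machinery of Section~\ref{sec:mdrgp}, exploiting Proposition~\ref{prop:saff}, Proposition~\ref{prop:deck}, and the definition of the modular group virtual multi-endomorphism via the diagram in Figure~\ref{fig:lift}. The central observation that drives everything is statement 3: if $\zg\in G_f$ is represented by $\zj$ with lift $\widetilde{\zj}$ as in Figure~\ref{fig:lift}, then $\zj\circ f=f\circ \widetilde{\zj}$, which at the level of $\text{HurwMod}_P(f)$ reads $\zg[f]=[f]\widetilde{\zg}$. I would prove this first, being careful that $\widetilde{\zg}$ is only defined up to $\text{DeckMod}(f)$ — but that coset ambiguity is exactly the right stabilizer of $[f]$, so the identity $\zg[f]=[f]\widetilde{\zg}$ holds for \emph{any} choice of image $\widetilde{\zg}$, which is the content as stated.

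With statement 3 in hand, statement 2 follows because $\zv$ represents an element of the right stabilizer of $[f]$ exactly when $f\circ\zv$ is isotopic rel $P$ to $f$, i.e.\ when $\zv$ is a deck transformation of $f$ up to isotopy — this is precisely the definition of $\text{DeckMod}(f)$, and one invokes Proposition~\ref{prop:deck} to see it lands where claimed. For statement 1, $\zg$ is in the left stabilizer iff $\zg[f]=[f]$; if $\zg\in G_f$ this says $[f]\widetilde{\zg}=[f]$, i.e.\ $\widetilde{\zg}\in\text{DeckMod}(f)$, which is the statement that some image of $\zg$ under the multi-endomorphism is trivial — so the left stabilizer, \emph{intersected with} $G_f$, is the kernel. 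The subtle point I need to nail down is that the left stabilizer is automatically contained in $G_f$: if $\zj\circ f$ is isotopic rel $P$ to $f$ via some homeomorphism $\widetilde{\zj}$, then $\zj\circ f = f\circ\widetilde{\zj}$ up to isotopy, so $\zj$ is liftable by definition. This is the kind of ``up to isotopy rel $P$'' bookkeeping that needs one careful sentence, using that two NET maps isotopic rel $P$ have a conjugating homeomorphism, but it is routine.

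Statement 4 is then a combinatorial consequence of the previous three together with Proposition~\ref{prop:saff}: writing $\zj_i[f]\zg_1=\zj_j[f]\zg_2$ and using that $[f]$ absorbs liftables on the left as right-multiplication by $\widetilde{(\cdot)}$, one reduces modulo $G_f$ to get $\zj_i\equiv\zj_j$, hence $i=j$ since the $\zj_i$ are a left transversal; then cancelling gives $[f]\zg_1=[f]\zg_2$, so $\zg_2\zg_1^{-1}$ is in the right stabilizer $\text{DeckMod}(f)$ by statement 2, and conversely. Statement 5 is immediate: the orbits of the right $G$-action on $\text{HurwMod}_P(f)=\bigsqcup_i \zj_i[f]G$ are indexed by the transversal, hence finite in number since $[G:G_f]<\infty$ by Proposition~\ref{prop:gamman}. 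Statement 6 unwinds the definition of Thurston equivalence: $f\circ\zv$ and $f\circ\zj$ are Thurston equivalent iff there is a conjugating pair, which after absorbing isotopies and using Proposition~\ref{prop:saff} amounts to an equation $\widetilde{\zg}[\zj]\zg^{-1}=[\zv]$ in the biset, with $\zg\in G_f$ liftable and $\widetilde{\zg}$ an image under the multi-endomorphism — this is really just statements 2 and 3 repackaged in terms of pre-composition.

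The main obstacle, and the place I would spend the most care, is not any single computation but the consistent handling of the multi-valuedness of the virtual multi-endomorphism together with isotopy-rel-$P$ ambiguities — in particular making sure that every ``$=$'' in statements 3, 4, and 6 is correctly interpreted as an identity in the $\text{Mod}(S^2,P)$-biset $\text{HurwMod}_P(f)$ rather than on the nose, and that the right stabilizer $\text{DeckMod}(f)$ is exactly the indeterminacy coset. Once the biset formalism of \cite{N} is set up and one proves the basic absorption identity $\zg[f]=[f]\widetilde{\zg}$ for $\zg\in G_f$ cleanly, the remaining statements should fall out by formal manipulation, with Proposition~\ref{prop:saff} supplying surjectivity onto $G_f$ wherever a homeomorphism needs to be replaced by an affine model and Proposition~\ref{prop:deck} pinning down the deck group.
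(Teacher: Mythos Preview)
Your proposal is correct and follows essentially the same approach as the paper. The paper's proof is terser---it declares statements 1, 2, and 3 ``clear'' and then handles 4, 5, 6 via the same manipulations you outline (rearranging $\zj_i[f]\zg_1=\zj_j[f]\zg_2$ to $\zj_j^{-1}\zj_i[f]=[f]\zg_2\zg_1^{-1}$ for statement 4, using that every element of $\text{HurwMod}_P(f)$ has the form $\zv[f]\zj$ together with statement 3 and Proposition~\ref{prop:gamman} for statement 5, and unwinding Thurston equivalence as $[f][\zv]=\zg[f][\zj]\zg^{-1}$ for statement 6)---so your more explicit treatment of the bookkeeping, including the point that the left stabilizer lies in $G_f$, is if anything more careful than the original.
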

  \begin{proof} Statements 1, 2 and 3 are clear.

To prove statement 4, suppose that $\zj_i[f]\zg_1=\zj_j[f]\zg_2$ for
some $i,j\in \{1,\dotsc,k\}$.  Then
$\zj_j^{-1}\zj_i[f]=[f]\zg_2\zg_1^{-1}$.  This means that
$\zj_j^{-1}\zj_i\in G_f$.  So $i=j$.  Hence $[f]=[f]\zg_2\zg_1^{-1}$.
This means that $\zg_2\zg_1^{-1}\in \text{DeckMod}(f)$.  This easily
proves statement 4.

To prove statement 5, we use the fact that every element of
$\text{HurwMod}_P(f)$ has the form $\zv[f]\zj$ for some $\zv,\zj\in
G$.  Hence statement 5 follows from statement 3 and statement 2 of
Proposition~\ref{prop:gamman}, which implies that $G_f$ has finite
index in $G$.

To prove statement 6, let $\zv,\zj\co (S^2,P)\to (S^2,P)$ be
orientation-preserving homeomorphisms such that $f\circ \zv$ is
Thurston equivalent to $f\circ \zj$.  It follows that $[f\circ
\zv]=[f][\zv]=\zg[f\circ \zj]\zg^{-1}=\zg[f][\zj]\zg^{-1}$ for some
$\zg\in G$.  This easily implies that $\zg\in G_f$.  Now statement 3
implies that $[f][\zv]=[f]\widetilde{\zg}[\zj]\zg^{-1}$.  Now Statement
4 implies that there exists $\ze\in \text{DeckMod}(f)$ such that
$\widetilde{\zg}[\zj]\zg^{-1}[\zv]^{-1}=\ze$.  So
$[\zv]=\ze^{-1}\widetilde{\zg}[\zj]\zg^{-1}$.  Since
$\ze^{-1}\widetilde{\zg}$ is an image of $\zg$ under the modular group
virtual multi-endomorphism, this proves the forward implication of
statement 6.  The backward implication is now clear.

This proves Theorem~\ref{thm:action}.
\end{proof}

Theorem~\ref{thm:action} is essentially valid when $\text{Mod}(S^2,P)$
is replaced by $\text{PMod}(S^2,P)$ and $\text{HurwMod}_P(f)$ is
replaced by $\text{PHurwMod}_P(f)$.  In statements 2 and 4 the group
$\text{DeckMod}(f)$ is replaced by the trivial group.  Statement 6
applied to $\text{Mod}(S^2,P)$ already handles the case in which $\zv$
and $\zj$ represent elements of $\text{PMod}(S^2,P)$: even if $\zv$
and $\zj$ represent elements of $\text{PMod}(S^2,P)$, all that can be
said about the element $\zg$ is that it represents an element of
$\text{Mod}(S^2,P)$.

The next theorem provides an effective procedure for calculating the
modular group virtual multi-endomorphism.  Effectiveness comes from the fact that the slope function $\mu_f$ is effectively computable \cite[\S 5]{cfpp}.

\begin{thm}\label{thm:viredmp} Let $f$ be a NET map, and suppose that
the usual lattice $\zL_2$ is $\mathbb{Z}^2$.  We calculate slopes of
simple closed curves in $S^2\setminus P_2$ using the standard basis
vectors $e_1$ and $e_2$ of $\mathbb{Z}^2$ as usual.  Let $\zm_f$ be
the slope function of $f$ and suppose that there exist slopes $s$ and
$t$ such that $\zm_f(s)$ and $\zm_f(t)$ are distinct slopes.  Let
$\zJ\in \text{Aff}(f)$, and let $\widetilde{\zj}\co S^2\to S^2$ and
$\zj\co S^2\to S^2$ be the homeomorphisms induced by $\zp_1$
and $\zp_2$ as in Figure~\ref{fig:lift}.  Let
$\widetilde{\zJ}\co \mathbb{R}^2\to \mathbb{R}^2$ be an affine
isomorphism such that the homeomorphism of $S^2$ which
$\widetilde{\zJ}$ induces on $S^2$ via $\zp_2$ is isotopic to
$\widetilde{\zj}$ rel $P_2$.
\begin{enumerate}
  \item The linear part $x\mapsto Qx$ of $\widetilde{\zJ}$ can be
computed as follows.  Suppose that $\zJ$ takes lines with slope $s$,
respectively $t$, to lines with slope $s'$, respectively $t'$.  Then
$Q$ is the element of $\text{GL}(2,\mathbb{Z})$, unique up to
multiplication by $\pm 1$ whose determinant equals the determinant of
$\zJ$ and which takes lines with slope $\zm_f(s)$, respectively
$\zm_f(t)$, to lines with slope $\zm_f(s')$, respectively $\zm_f(t')$.
  \item The translation part of $\widetilde{\zJ}$ can be computed as
follows.  Let $T=\{0,e_1,e_2,e_1+e_2\}$, and for $\zt\in T$ let
$x_\zt=\zp_2(\zt)$.  So the postcritical set of $f$ is
$\{x_0,x_{e_1},x_{e_2},x_{e_1+e_2}\}$.  Then the translation term of
$\widetilde{\zJ}$ is the element $\zt\in T$ for which $\zJ$ maps
$\zp_1^{-1}(x_0)$ to $\zp_1^{-1}(x_\zt)$.
\end{enumerate}
\end{thm}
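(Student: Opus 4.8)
\emph{Proof proposal.}
The plan is to prove the two statements separately, in each case combining the intertwining relation $\zj\circ f=f\circ\widetilde{\zj}$ of Figure~\ref{fig:lift} with the affine--modular dictionary of Lemmas~\ref{lemma:inducedown}--\ref{lemma:induceup} and with the defining property of the slope function $\zm_f$. The basic observation is that an affine isomorphism of $\bbR^2$ with linear part in $\text{GL}(2,\bbZ)$ and integer translation part — hence descending via $\zp_2$ to a homeomorphism of $(S^2,P)$ — acts on the set $\overline{\bbQ}$ of slopes of essential, non-peripheral simple closed curves in $S^2\setminus P$ through the standard action of its \emph{linear} part on $\overline{\bbQ}$: a curve of slope $\zs$ lifts to the parallel family of Euclidean lines of slope $\zs$, which the affine map carries to a parallel family whose common slope depends only on the linear part. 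Thus $\zj$ acts on slopes by the linear part of $\zJ$, while $\widetilde{\zj}$ — being isotopic rel $P$ to the homeomorphism induced by $\widetilde{\zJ}$ via $\zp_2$, and isotopic homeomorphisms acting identically on isotopy classes of curves — acts on slopes by $Q$. For a slope $\zs$ I write $\zs'$ for the slope of the image under $\zJ$ of a line of slope $\zs$, extending the notation $s',t'$ of the statement.

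For statement~1, note first that for the homeomorphism induced by $\widetilde{\zJ}$ via $\zp_2$ to exist one must have $\widetilde{\zJ}\in\text{Aff}(2,\bbZ)$, so $Q\in\text{GL}(2,\bbZ)$ and the translation part of $\widetilde{\zJ}$ lies in $\bbZ^2$. Now fix a slope $\zs$ with $\zm_f(\zs)\ne\odot$ and a curve $\zg$ of that slope. Since $\zJ\in\text{Aff}(f)$ fixes $\zD=\zp_1^{-1}(P)$ setwise, both $\zj$ and $\widetilde{\zj}$ are homeomorphisms of the pair $(S^2,P)$; from $\zj\circ f=f\circ\widetilde{\zj}$ one gets $\widetilde{\zj}(f^{-1}(\zg))=f^{-1}(\zj(\zg))$, so $\widetilde{\zj}$ restricts to a bijection from the essential, non-peripheral components of $f^{-1}(\zg)$ onto those of $f^{-1}(\zj(\zg))$. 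The former all have slope $\zm_f(\zs)$, while $\zj(\zg)$ has slope $\zs'$, so the latter form a nonempty family all of slope $\zm_f(\zs')$; in particular $\zm_f(\zs')\ne\odot$. Since $\widetilde{\zj}$ acts on slopes by $Q$, this gives
\begin{equation*}
Q\cdot\zm_f(\zs)=\zm_f(\zs')\qquad\text{whenever }\zm_f(\zs)\ne\odot .
\end{equation*}
Taking $\zs=s$ and $\zs=t$ shows that $Q$ carries $\zm_f(s)\mapsto\zm_f(s')$ and $\zm_f(t)\mapsto\zm_f(t')$, and $\det Q=\det\zJ$ since $f$ preserves orientation, so that $\widetilde{\zj}$ preserves orientation exactly when $\zj$ does (apply the orientation clause of Lemma~\ref{lemma:inducedown} to the relevant descents). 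Finally, uniqueness up to $\pm1$ is general: if $Q_1,Q_2\in\text{GL}(2,\bbZ)$ have the same determinant and agree on the two distinct slopes $\zm_f(s)\ne\zm_f(t)$, then the image of $Q_1^{-1}Q_2$ in $\text{PSL}(2,\bbZ)$ fixes two distinct elements of $\overline{\bbQ}$ and is therefore trivial — a nontrivial element of $\text{PSL}(2,\bbZ)$ has at most one fixed point in $\overline{\bbQ}$, its hyperbolic fixed points being irrational — so $Q_1=\pm Q_2$.

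For statement~2, I would evaluate $\widetilde{\zj}(x_0)$ in two ways. Writing $\widetilde{\zJ}(x)=Qx+v$ with $v\in\bbZ^2$, and using that the homeomorphism induced by $\widetilde{\zJ}$ via $\zp_2$ is isotopic to $\widetilde{\zj}$ rel $P$ and hence agrees with $\widetilde{\zj}$ on $P$, we get $\widetilde{\zj}(x_0)=\zp_2(\widetilde{\zJ}(0))=\zp_2(v)=x_\zt$, where $\zt$ is the element of $T$ with $v\equiv\zt\pmod{2\bbZ^2}$; here one uses that $\zp_2$ restricted to $\bbZ^2$ induces a bijection $\bbZ^2/2\bbZ^2\to P$. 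On the other hand, $\zJ\in\text{Aff}(f)$ satisfies $\zJ(\zL_1)=\zL_1$, so $\zJ$ normalizes $\zG_1$ and permutes the fibers of $\zp_1$; since also $\zJ(\zD)=\zD$ and $\zJ$ induces $\widetilde{\zj}$ via $\zp_1$, we have $\zJ(\zp_1^{-1}(x))=\zp_1^{-1}(\widetilde{\zj}(x))$ for every $x\in P$, in particular $\zJ(\zp_1^{-1}(x_0))=\zp_1^{-1}(x_\zt)$. Because $\#P=4$ makes $\zt\mapsto x_\zt$ a bijection $T\to P$, this identifies $\zt$ as the unique element of $T$ with $\zJ(\zp_1^{-1}(x_0))=\zp_1^{-1}(x_\zt)$, which is statement~2. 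The one genuinely substantive point is the displayed equivariance identity $Q\cdot\zm_f(\zs)=\zm_f(\zs')$, and the hypothesis that $\zm_f(s),\zm_f(t)$ are genuine, distinct slopes is used there twice over: the identity itself requires $\zm_f(\zs)\ne\odot$, and pinning $Q$ down up to sign requires the two image slopes to be distinct. The remaining points — the normalization relating $\text{GL}(2,\bbZ)$-matrices to slopes, the fact that $\zJ$ normalizes $\zG_1$, and that $\zp_2$ collapses $\bbZ^2$ onto $P$ through $\bbZ^2/2\bbZ^2$ — are routine bookkeeping.
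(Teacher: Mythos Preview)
Your argument is correct and, for statement~1, takes a somewhat more direct route than the paper's. The paper argues via Dehn twists: it picks a Dehn twist $\zv_s$ about a curve of slope $s$, conjugates by $\zj$ to obtain a Dehn twist $\zv_{s'}$ of slope $s'$, and then invokes the argument preceding Theorem~7.1 of \cite{cfpp} to identify the images under the virtual multi-endomorphism of suitable powers of these twists as Dehn twists of slopes $\zm_f(s)$ and $\zm_f(s')$; the conjugation relation $\widetilde{\zv}_{s'}=\widetilde{\zj}\circ\widetilde{\zv}_s\circ\widetilde{\zj}^{-1}$ then forces $\widetilde{\zj}$ to carry slope $\zm_f(s)$ to slope $\zm_f(s')$. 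You bypass the Dehn twists entirely and work directly with a curve $\zg$ and the bijection $\widetilde{\zj}$ induces between the essential components of $f^{-1}(\zg)$ and of $f^{-1}(\zj(\zg))$, reading off the equivariance $Q\cdot\zm_f(\zs)=\zm_f(\zs')$ immediately from the intertwining relation. This is more elementary---no appeal to \cite{cfpp} is needed---and slightly cleaner; what the Dehn-twist formulation buys is a closer tie to the virtual endomorphism as a map on mapping classes rather than on curves. The uniqueness arguments coincide. For statement~2 the paper simply declares the result ``rather clear''; your explicit two-way evaluation of $\widetilde{\zj}(x_0)$ is a faithful unpacking of what the paper leaves implicit.
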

  \begin{proof} We first prove statement 1.  Let $\zv_s\co
(S^2,P_2)\to (S^2,P_2)$ be a Dehn twist (possibly twisting many times)
about a simple closed curve in $S^2\setminus P_2$ with slope $s$.
Since $\zJ$ takes lines with slope $s$ to lines with slope $s'$, the
homeomorphism $\zv_{s'}=\zj\circ \zv_s\circ \zj^{-1}$ is a Dehn twist
about a simple closed curve in $S^2\setminus P_2$ with slope $s'$.  We
next apply the argument which immediately precedes Theorem 7.1 of
\cite{cfpp}.  This argument shows that the modular group virtual
multi-endomorphism maps the isotopy class of some power of $\zv_s$ to
the isotopy class of a Dehn twist $\widetilde{\zv}_s$ (possibly
twisting many times) about a simple closed curve in $S^2\setminus P_2$
with slope $\zm_f(s)$.  A corresponding statement holds for
$\zv_{s'}$.  We replace $\zv_s$ and $\zv_{s'}$ by appropriate powers
to obtain the diagram in Figure~\ref{fig:endo}.  Thus
$\widetilde{\zj}$ takes simple closed curves in $S^2\setminus P_2$
with slope $\zm_f(s)$ to simple closed curves in $S^2\setminus P_2$
with slope $\zm_f(s')$.  So $\widetilde{\zJ}$, equivalently $Q$, takes
lines with slope $\zm_f(s)$ to lines with slope $\zm_f(s')$.
Similarly, $Q$ takes lines with slope $\zm_f(t)$ to lines with slope
$\zm_f(t')$.  Because $f$ preserves orientation, it is clear that $Q$
must have determinant 1 if $\zJ$ preserves orientation and it must
have determinant $-1$ if $\zJ$ reverses orientation.

\begin{figure}
  \begin{equation*}
\xymatrix{\zv_s\ar@{|->}[d]\ar@{|->}[r] & \widetilde{\zv}_s\ar@{|->}[d]\\
\zv_{s'}=\zj\circ \zv_s\circ \zj^{-1}\ar@{|->}[r] &
\widetilde{\zv}_{s'}=\widetilde{\zj}\circ \widetilde{\zv}_s\circ
\widetilde{\zj}^{-1}} 
  \end{equation*}
 \caption{Proving Theorem~\ref{thm:viredmp}}
\label{fig:endo}
\end{figure}

To see that this determines $Q$ up to multiplication by $\pm 1$, let
$R$ be an element of $\text{GL}(2,\mathbb{Z})$ with
$\text{det}(R)=\text{det}(Q)$ which takes lines with slope $\zm_f(s)$,
respectively $\zm_f(t)$, to lines with slope $\zm_f(s')$, respectively
$\zm_f(t')$.  Then $R^{-1}Q$ takes lines with slope $\zm_f(s)$,
respectively $\zm_f(t)$, to lines with slope $\zm_f(s)$, respectively
$\zm_f(t)$. This means that the linear fractional transformation
associated to $R^{-1}Q$ arises from an element of $\text{PSL}(2,\bbZ)$
and it fixes two extended rational numbers.  Hence this linear
fractional transformation is the identity map.  This proves statement
1 of Theorem~\ref{thm:viredmp}.

Statement 2 is rather clear, and so the proof of
Theorem~\ref{thm:viredmp} is complete.
\end{proof}

\section{Hurwitz invariants and elementary divisors }
\label{sec:hurwitzb}\nosubsections

The main goal of this section is to find a rather simple complete
combinatorial invariant for NET map modular group Hurwitz
classes.  This is achieved in Theorem~\ref{thm:hurwitz}.  Then we show
that elementary divisors form a complete set of invariants for
topological equivalence; this is Theorem \ref{thm:eleydivrs}.

We begin with a discussion of elementary divisors.  Let $f$ be a NET
map.  From a presentation for $f$ we have lattices $\zL_1\subseteq
\zL_2$ as usual; we have the group $\mathcal{A}=\Lambda_2/2\Lambda_1$
and the Hurwitz structure set $\mathcal{HS}$ from \S \ref{sec:mdrgp}.
There exist unique positive integers $m$ and $n$ with $n|m$ such that
$\zL_2/\zL_1\cong (\mathbb{Z}/m \mathbb{Z})\oplus (\mathbb{Z}/n
\mathbb{Z})$.  The integers $m$ and $n$ are the \emph{elementary
divisors} of $f$, defined in Section 8 of \cite{fpp1}.  Here is
another way to view the elementary divisors of $f$.  We factor $f$ in
the usual way as $f=h\circ g$, where $g$ is a Euclidean NET map and
$h$ is a homeomorphism.  The map $g$ is induced by an
orientation-preserving affine isomorphism $\zF\co \mathbb{R}^2\to
\mathbb{R}^2$ such that $\zF(\zL_2)=\zL_1$.  Expressing $\zF$ in terms
of a basis for $\zL_2$, we have that $\zF(x)=Ax+b$, where $A$ is a
$2\times 2$ matrix of integers and $b\in \zL_1$.  There exist matrices
$Q,R\in \text{SL}(2,\mathbb{Z})$ such that $A=QDR$, where $D$ is the
diagonal matrix whose diagonal entries are $m$ and $n$.  So $m$ and
$n$ are the elementary divisors of $A$.  Since
$\deg(f)=\deg(g)=\text{det}(A)=mn$, the elementary divisors of $f$
give finer information than $\deg(f)$.  We will see in
Corollary~\ref{cor:eleydivrs} that $m$ and $n$ are uniquely determined
by $f$. The \emph{atypical cases} are $(m,n)\in\{(2,1), (2,2)\}$.

We continue with a discussion of Hurwitz structure sets.  The
Hurwitz structure set of $f$, defined in Section~\ref{sec:mdrgp},
depends on the presentation.  Here, we analyze the dependency.  We say
that a subset $\cH\cS$ of $\mathcal{A}$ is a \emph{Hurwitz structure
set} if it is a disjoint union of four subsets of the form
$\cH\cS=\{\pm h_1\}\amalg \{\pm h_2\}\amalg \{\pm h_3\}\amalg \{\pm
h_4\}$, where $h_i\in \mathcal{A}$.  It is possible that $h_i=-h_i$,
but $h_i\ne \pm h_j$ if $i\ne j$ for $i,j\in \{1,2,3,4\}$.  The
Hurwitz structure set coming from a NET map presentation gives such an
example.  In fact, every such set $\mathcal{HS}\subseteq \cA$ is
the Hurwitz structure set associated to some NET map.  Now suppose
that we also have possibly different lattices $\zL'_1\subseteq
\zL'_2$.  Let $\zJ\co \zL_2\to \zL'_2$ be an orientation-preserving
affine isomorphism such that $\zJ(\zL_1)=\zL'_1$.  Then $\zJ$ induces
an affine bijection $\overline{\zJ}\co \mathcal{A}\to \mathcal{A}'$.
The linear part of $\overline{\zJ}$ takes Hurwitz structure sets in
$\mathcal{A}$ to Hurwitz structure sets in $\mathcal{A}'$.  Since
$\zJ(0)\in \zL'_1$, the translation part of $\overline{\zJ}$ is an
element of order 2.  It easily follows that the translation part of
$\overline{\zJ}$ takes Hurwitz structure sets in $\mathcal{A}'$ to
Hurwitz structure sets in $\mathcal{A}'$.  So $\overline{\zJ}$ takes
Hurwitz structure sets to Hurwitz structure sets.  We say that Hurwitz
structure sets $\cH\cS\subseteq \mathcal{A}$ and $\cH\cS'\subseteq
\mathcal{A}'$ are \emph{equivalent} if there exists such an affine
isomorphism $\zJ\co \zL_2\to \zL'_2$ with $\zJ(\zL_1)=\zL'_1$ such
that $\cH\cS'=\overline{\zJ}(\cH\cS)$.  This defines an equivalence
relation on Hurwitz structure sets.

The \emph{Hurwitz invariant} of $f$ is defined as the equivalence class of the Hurwitz structure set $\mathcal{HS}$ under the equivalence relation of the preceding paragraph.  It is a finer invariant than the elementary divisors. 

\begin{thm}\label{thm:hurwitz} The Hurwitz invariant of a NET map $f$
is an invariant of the modular group Hurwitz class of $f$.
Conversely, two modular group Hurwitz classes of NET maps with equal
Hurwitz invariants are equal.  Thus the Hurwitz invariant is a
complete invariant of modular group Hurwitz classes of NET maps.
\end{thm}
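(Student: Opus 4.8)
The plan is to fix NET‑map presentations of $f$ and of $f'$ with $\zL_2=\zL_2'=\bbZ^2$ and to prove the single statement: $f$ and $f'$ are Hurwitz equivalent for the modular group if and only if there is $\zJ\in\text{SAff}(2,\bbZ)$ with $\zJ(\zL_1)=\zL_1'$ and $\overline{\zJ}(\cH\cS)=\cH\cS'$. Granting this, the special case $f'=f$ with $\zv=\zj=\text{id}$ but two (possibly different) presentations shows that the Hurwitz invariant is independent of the presentation, so that ``the Hurwitz invariant of $f$'' is well defined, and the two directions of the equivalence are then exactly the two assertions of the theorem. I will repeatedly use that $\zD=\zp_1^{-1}(P)\subseteq\bbZ^2$ is a $\zG_1$‑preimage, hence saturated under translation by $2\zL_1$, and likewise for $\zD'$; combined with $\zJ(\zL_1)=\zL_1'$ (which forces the linear part of $\zJ$ to carry $2\zL_1$ onto $2\zL_1'$) this makes $\overline{\zJ}(\cH\cS)=\cH\cS'$ equivalent to $\zJ(\zD)=\zD'$.

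\emph{Completeness.} Given such a $\zJ$, apply Lemma~\ref{lemma:inducedown} twice: with the lattices $(\bbZ^2,\bbZ^2)$ and covers $(\zp_2,\zp_2')$ it produces an orientation‑preserving homeomorphism $\zv$ of $S^2$ with $\zv\circ\zp_2=\zp_2'\circ\zJ$; with the lattices $(\zL_1,\zL_1')$ and covers $(\zp_1,\zp_1')$ it produces an orientation‑preserving homeomorphism $\zj$ of $S^2$ with $\zj\circ\zp_1=\zp_1'\circ\zJ$ (both induced maps have degree $1$, hence are homeomorphisms). Then $\zv(P)=\zv(\zp_2(\bbZ^2))=\zp_2'(\bbZ^2)=P'$ and $\zj(P)=\zj(\zp_1(\zD))=\zp_1'(\zJ(\zD))=\zp_1'(\zD')=P'$, so both carry $P$ to $P'$; and composing $\zv\circ f\circ\zp_1=\zp_2'\circ\zJ=f'\circ\zp_1'\circ\zJ=f'\circ\zj\circ\zp_1$ with the surjectivity of $\zp_1$ gives $\zv\circ f=f'\circ\zj$. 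So $f$ and $f'$ are Hurwitz equivalent for the modular group.

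\emph{Invariance.} Conversely, suppose $\zv\circ f=f'\circ\zj$ with $\zv,\zj$ orientation‑preserving homeomorphisms carrying $P$ to $P'$. Since $\zv$ carries the four branch values $P$ of $\zp_2$ onto the branch values $P'$ of $\zp_2'$, and $\zp_2^{-1}(P)=\bbZ^2=\zp_2'^{-1}(P')$, Lemma~\ref{lemma:induceup} gives $\zJ_2\in\text{SAff}(2,\bbZ)$ and $\zv_0$, isotopic to $\zv$ rel $P$, with $\zv_0\circ\zp_2=\zp_2'\circ\zJ_2$. A short argument shows $\zj$ carries $C_f$ onto $C_{f'}$ and $f^{-1}(P)$ onto $(f')^{-1}(P')$, hence carries $P_1=\zp_1(\zL_1)$ onto $P_1'=\zp_1'(\zL_1')$, so Lemma~\ref{lemma:induceup} gives an affine isomorphism $\zJ_1$ with $\zJ_1(\zL_1)=\zL_1'$ and $\zj_0$, isotopic to $\zj$ rel $P_1$, with $\zj_0\circ\zp_1=\zp_1'\circ\zJ_1$. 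Running the mechanism in the proof of Proposition~\ref{prop:saff} for the square $\zv\circ f=f'\circ\zj$ — now with the two NET maps $f,f'$ on the vertical arrows — one adjusts $\zv_0,\zj_0$ within their isotopy classes so that $\zv_0\circ f=f'\circ\zj_0$ holds on the nose; then $\zp_2'\circ\zJ_1$ agrees with $\zp_2'\circ\zJ_2$ up to isotopy rel $\bbZ^2$, and by Proposition~\ref{prop:mod}, which exhibits $\zG_2$ as the kernel of $\text{SAff}(2,\bbZ)\to\text{Mod}(S^2,P')$, the map $\zJ_2\circ\zJ_1^{-1}$ lies in $\zG_2$, so after postcomposing $\zJ_2$ by an element of $\zG_2$ we may take $\zJ:=\zJ_1=\zJ_2$. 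This $\zJ$ lies in $\text{SAff}(2,\bbZ)$, satisfies $\zJ(\zL_1)=\zL_1'$, and conjugates $\zG_1$ to $\zG_1'$, so $\zJ(\zD)$ is $2\zL_1'$‑saturated; since $\zj_0$ is the image of $\zJ$ under $\zp_1,\zp_1'$ and $\zv_0(P)=P'$, the on‑the‑nose square $f'\circ\zj_0=\zv_0\circ f$ yields $\zj_0(P)=P'$, whence $\zJ(\zD)=\zp_1'^{-1}(\zj_0(P))=\zD'$, i.e. $\overline{\zJ}(\cH\cS)=\cH\cS'$.

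The main obstacle is the step producing the single affine map $\zJ$ compatible with both $\zL_1$ and $\zD$: Lemma~\ref{lemma:induceup} a priori controls the affine lift of $\zj$ only relative to the four branch values $P_1$ of $\zp_1$, whereas the Hurwitz structure set records the dynamically‑defined postcritical set $P$ — equivalently $\zD$ — which need not be comparable to $P_1$. This forces the simultaneous treatment of the whole square $\zv\circ f=f'\circ\zj$, so that the lift of $\zv$ (which automatically lands in $\text{SAff}(2,\bbZ)$ because $\zv$ preserves the branch values of $\zp_2$) and the lift of $\zj$ are pinned, via the uniqueness clauses of Lemma~\ref{lemma:induceup} and the kernel computation of Proposition~\ref{prop:mod}, to a common affine map; here one also uses that a continuously varying family of sublattices of $\bbR^2$ all containing a fixed sublattice must be constant, which rigidifies the lattice $\zp_1^{-1}(f^{-1}(P))=\bbZ^2$ of preimages through the isotopy. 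Once $\zJ=\zJ_1=\zJ_2$ is in hand, the remaining points — the saturation of $\zD$ and the identity $\zj_0(P)=P'$ — are bookkeeping.
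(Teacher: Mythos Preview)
Your completeness direction is correct and is essentially the paper's backward implication: descend $\zJ$ once through $\zp_2,\zp_2'$ and once through $\zp_1,\zp_1'$, and read off $\zv\circ f=f'\circ\zj$ by surjectivity of $\zp_1$.

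The invariance direction has the right architecture---lift $\zj$ through $\zp_1,\zp_1'$, lift $\zv$ through $\zp_2,\zp_2'$, and compare the two lifts via the cube---but your execution has a real gap. By invoking the \emph{affine} clause of Lemma~\ref{lemma:induceup} (statement~2) you obtain $\zJ_1,\zJ_2$ that induce maps $\zj_0,\zv_0$ only \emph{isotopic} to $\zj,\zv$ rel $P_1$ and $P$ respectively. Two problems follow. First, the ``adjustment'' you invoke to force $\zv_0\circ f=f'\circ\zj_0$ on the nose cannot be carried out while keeping the equations $\zj_0\circ\zp_1=\zp_1'\circ\zJ_1$ and $\zv_0\circ\zp_2=\zp_2'\circ\zJ_2$: those equations pin down $\zj_0,\zv_0$ completely once $\zJ_1,\zJ_2$ are fixed, so there is nothing left to adjust. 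Second, even granting $\zJ:=\zJ_1=\zJ_2$, your deduction $\zj_0(P)=P'$ (hence $\zJ(\zD)=\zD'$) is unjustified: the isotopy $\zj\simeq\zj_0$ is only rel $P_1$, which is in general disjoint from $P$, and the on-the-nose square $\zv_0\circ f=f'\circ\zj_0$ only gives $\zj_0(f^{-1}(P))=(f')^{-1}(P')$, not $\zj_0(P)=P'$; there is no reason $\zj_0$ should respect forward $f$-orbits.

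The paper sidesteps both issues by using statement~1 of Lemma~\ref{lemma:induceup} instead of statement~2: lift $\zj$ and $\zv$ \emph{exactly} to homeomorphisms $\zJ,\zQ$ of $\bbR^2$ that are affine only on $\zL_1$ and $\zL_2$ respectively. Because the lifts are exact, the cube built from $\zv\circ f=f'\circ\zj$ commutes on the nose (no adjustment needed), giving $\zp_2'\circ\zJ=\zp_2'\circ\zQ$ and hence $\zJ\in\zQ\cdot\zG_2'$; this forces $\zJ(\bbZ^2)=\bbZ^2$ with $\zJ|_{\bbZ^2}$ affine. And since $\zJ$ is the \emph{exact} lift of $\zj$, the hypothesis $\zj(P)=P'$ immediately yields $\zJ(\zD)=\zJ(\zp_1^{-1}(P))=(\zp_1')^{-1}(\zj(P))=\zD'$. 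Replacing your appeal to statement~2 by statement~1 closes both gaps with no other change to your outline.
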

  \begin{proof} We first prove that Hurwitz invariants are indeed
invariants of modular group Hurwitz classes.  Let $f$ and $f'$ be NET
maps which are Hurwitz equivalent for the modular group.  Let $p_1$,
$q_1$, $p_2$, $q_2$, $\ldots $ be the usual maps, lattices, groups and
four-element sets for $f$.  We likewise have maps, lattices, groups
and four-element sets for $f'$.  Because $f$ and $f'$ are Hurwitz
equivalent for the modular group, there exist orientation-preserving
homeomorphisms $\zj,\zq\co (S^2,P_2)\to (S^2,P'_2)$ such that
$\zq\circ f=f'\circ \zj$.  This implies that $\zj(P_1)=P'_1$.  Now
statement 1 of Lemma~2.2 implies that there exist
orientation-preserving homeomorphisms $\zJ, \zQ\co \mathbb{R}^2\to
\mathbb{R}^2$ such that $\zJ(\zL_1)=\zL'_1$, $\zQ(\zL_2)=\zL'_2$, the
restriction of $\zJ$ to $\zL_1$ is affine, the restriction of $\zQ$ to
$\zL_2$ is affine, $\zp'_1\circ \zJ=\zj\circ \zp_1$ and $\zp'_2\circ
\zQ=\zq\circ \zp_2$.  Hence every face of the cube in
Figure~\ref{fig:hurwitzup} is commutative except possibly the top.

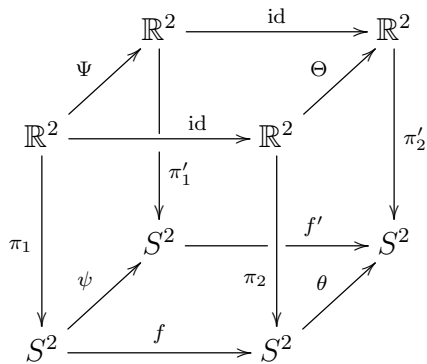
\begin{figure}
  \begin{equation*}
\xymatrix{ & \mathbb{R}^2\ar'[d][dd]^>>>>>>{\zp'_1}\ar[rr]^{\text{id}} & & \mathbb{R}^2\ar[dd]^{\zp'_2}\\
\mathbb{R}^2\ar[dd]_{\zp_1}
\ar[rr]^>>>>>>>>>>{\qquad\text{id}}\ar[ur]^\zJ &
 & \mathbb{R}^2\ar[dd]_>>>>>>{\zp_2}\ar[ur]^\zQ & \\
 & S^2\ar'[r][rr]^<<<<{f'} & & S^2\\
S^2\ar[rr]^f\ar[ur]^\zj & & S^2\ar[ur]^\zq &}
  \end{equation*}
 \caption{ Proving the forward direction of
Theorem~\ref{thm:hurwitz}}
\label{fig:hurwitzup}
\end{figure}

Now we consider the top of the cube in Figure~\ref{fig:hurwitzup}.
Since every face of the cube other than the top is commutative,
$\zp'_2\circ \zJ=\zp'_2\circ \zQ$.  It follows that $\zJ$ is $\zQ$
postcomposed with a deck transformation for $\zp'_2$, namely, an
element of $\zG'_2$.  So $\zJ(\zL_2)=\zL'_2$, and the restriction of
$\zJ$ to $\zL_2$ is affine.  So $\zJ$ induces an affine bijection
$\overline{\zJ}\co \mathcal{A}\to \mathcal{A}'$.  Since
$\zj(P_2)=P'_2$, the map $\overline{\zJ}$ takes the Hurwitz structure
set $p_1^{-1}(P_2)$ of $f$ to the Hurwitz structure set
$(p'_1)^{-1}(P'_2)$ of $f'$.  Thus the Hurwitz invariant of $f$ equals
the Hurwitz invariant of $f'$.  This proves that the Hurwitz invariant
is indeed an invariant of the modular group Hurwitz class of a NET
map.

It remains to prove that if two modular group Hurwitz classes of NET
maps have equal Hurwitz invariants, then they are equal.  So let $f$
and $f'$ be NET maps with postcritical sets $P_2$ and $P'_2$ and equal
Hurwitz invariants.  So there exists an orientation-preserving affine
isomorphism $\zJ\co \zL_2\to \zL'_2$, which we extend to all of
$\mathbb{R}^2$, such that $\zJ(\zL_1)=\zL'_1$ and $\zJ$ induces an
affine bijection $\overline{\zJ}\co \mathcal{A}\to \mathcal{A}'$
which takes the Hurwitz structure set $p_1^{-1}(P_2)$ of $f$ to the
Hurwitz structure set $(p'_1)^{-1}(P'_2)$ of $f'$.  So Lemma~2.1
yields orientation-preserving homeomorphisms $\zj\co (S^2,P_2)\to
(S^2,P'_2)$ and $\widetilde{\zj}\co (S^2,P_1)\to (S^2,P'_1)$ such that
$\zj\circ \zp_2=\zp'_2\circ \zJ$ and $\widetilde{\zj}\circ
\zp_1=\zp'_1\circ \zJ$.  Hence every face of the cube of
Figure~\ref{fig:hurwitzdown} other than possibly the bottom is
commutative.  Since the vertical maps are surjective, the bottom is
also commutative.  Because $\zJ$ induces a map from the Hurwitz
structure set of $f$ to the Hurwitz structure set of $f'$, it follows
that $\widetilde{\zj}(P_2)=P'_2=\zj(P_2)$.  Hence $f$ and $f'$ are
modular group Hurwitz equivalent.

\begin{figure}
  \begin{equation*}
\xymatrix{ & \mathbb{R}^2\ar'[d][dd]^>>>>>>{\zp'_1}\ar[rr]^{\text{id}}
& & \mathbb{R}^2\ar[dd]^{\zp'_2}\\ \mathbb{R}^2\ar[dd]_{\zp_1}
\ar[rr]^>>>>>>>>>>{\qquad\text{id}}\ar[ur]^\zJ & &
\mathbb{R}^2\ar[dd]_>>>>>>{\zp_2}\ar[ur]^\zJ & \\ &
S^2\ar'[r][rr]^<<<<{f'} & & S^2\\
S^2\ar[rr]^f\ar[ur]^{\widetilde{\zj}} & & S^2\ar[ur]^\zj &}
  \end{equation*}
 \caption{ Proving the backward direction of
Theorem~\ref{thm:hurwitz}}
\label{fig:hurwitzdown}
\end{figure}
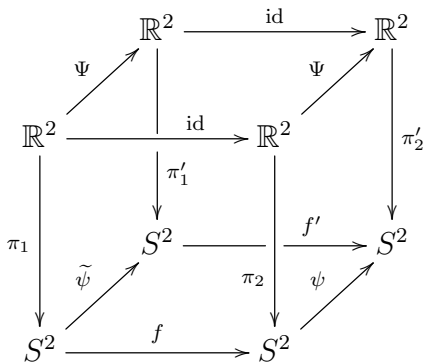

This proves Theorem~\ref{thm:hurwitz}.
\end{proof}

\begin{cor}\label{cor:eleydivrs} The elementary divisors of a NET map
$f$ are invariants of the modular group Hurwitz class of $f$.
\end{cor}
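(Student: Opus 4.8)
The plan is to deduce Corollary~\ref{cor:eleydivrs} directly from Theorem~\ref{thm:hurwitz}, using the fact that the elementary divisors are a coarser invariant than the Hurwitz invariant. Recall that the elementary divisors $(m,n)$ of $f$ are defined by $\zL_2/\zL_1 \cong (\bbZ/m\bbZ)\oplus(\bbZ/n\bbZ)$ with $n\mid m$. The key observation is that both the group $\cA = \zL_2/2\zL_1$ and its distinguished subset $\cH\cS$ (the Hurwitz structure set) determine, and are built from, the pair of lattices $\zL_1 \subseteq \zL_2$; in particular the ambient group $\cA$ already records $\zL_2/\zL_1$ up to isomorphism, since $2\zL_1$ has index $mn$ in $\zL_1$ and $\zL_1/2\zL_1 \cong (\bbZ/2\bbZ)^2$, so $|\cA| = 4mn$ and, more precisely, the subgroup $\zL_1/2\zL_1 \le \cA$ together with the quotient $\cA/(\zL_1/2\zL_1) \cong \zL_2/\zL_1$ recovers the elementary divisors.

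First I would note that, by Theorem~\ref{thm:hurwitz}, any two NET maps $f$ and $f'$ in the same modular group Hurwitz class have equivalent Hurwitz structure sets: there is an orientation-preserving affine isomorphism $\zJ\co \zL_2 \to \zL'_2$ with $\zJ(\zL_1) = \zL'_1$ and $\overline{\zJ}(\cH\cS) = \cH\cS'$. The point is that the existence of such a $\zJ$ with $\zJ(\zL_1) = \zL'_1$, ignoring the Hurwitz structure sets entirely, already forces $\zL_2/\zL_1 \cong \zL'_2/\zL'_1$ as abelian groups, since $\zJ$ descends to a group isomorphism $\zL_2/\zL_1 \to \zL'_2/\zL'_1$. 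By the structure theorem for finitely generated abelian groups (uniqueness of elementary divisors), this means $(m,n) = (m',n')$.

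So the proof is essentially one line: apply Theorem~\ref{thm:hurwitz} to extract the affine isomorphism $\zJ$ carrying $\zL_1$ to $\zL'_1$ and $\zL_2$ to $\zL'_2$, observe it induces an isomorphism of the quotient groups, and conclude equality of elementary divisors. I would also remark, for completeness, that combined with Corollary~\ref{cor:eleydivrs}'s own role this gives the statement flagged in the introduction that all NET maps in a given modular group Hurwitz class share elementary divisors. There is no real obstacle here; the only thing to be slightly careful about is that the Hurwitz invariant is defined as an equivalence class and one must actually unpack the definition of equivalence of Hurwitz structure sets to see that it supplies the isomorphism $\zL_2/\zL_1 \cong \zL'_2/\zL'_1$ — but this is immediate from the definition given just before the theorem, where the equivalence is witnessed precisely by an affine isomorphism $\zJ\co \zL_2 \to \zL'_2$ with $\zJ(\zL_1) = \zL'_1$.
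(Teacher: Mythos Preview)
Your proposal is correct and takes essentially the same approach as the paper: the corollary is stated immediately after Theorem~\ref{thm:hurwitz} without an explicit proof, the implication being that the equivalence of Hurwitz structure sets is witnessed by an affine isomorphism $\zJ\co\zL_2\to\zL'_2$ with $\zJ(\zL_1)=\zL'_1$, whose linear part gives $\zL_2/\zL_1\cong\zL'_2/\zL'_1$ and hence equal elementary divisors. Your write-up simply makes this explicit.
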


\begin{cor}\label{cor:dgl} Every modular group Hurwitz class of NET
maps is represented by a NET map whose presentation matrix is
diagonal. 
\end{cor}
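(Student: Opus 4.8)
The plan is to deduce this from Theorem~\ref{thm:hurwitz} together with the elementary divisor theorem. Let $\mathcal{C}$ be a modular group Hurwitz class of NET maps, and fix a representative $f\in\mathcal{C}$ presented as in \S\ref{sec:background}, with lattices $\Lambda_1\subseteq\Lambda_2=\mathbb{Z}^2$, group $\mathcal{A}=\Lambda_2/2\Lambda_1$, and Hurwitz structure set $\mathcal{HS}\subseteq\mathcal{A}$. By Theorem~\ref{thm:hurwitz} it is enough to exhibit a NET map $f'$ whose Hurwitz structure set is equivalent to $\mathcal{HS}$ and which admits a presentation with diagonal presentation matrix; it will then follow that $f'\in\mathcal{C}$.

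First I would normalize the lattice pair. By the elementary divisor theorem applied to the inclusion $\Lambda_1\subseteq\Lambda_2=\mathbb{Z}^2$ there is a basis $(u_1,u_2)$ of $\mathbb{Z}^2$ and the elementary divisors $n\mid m$ of $f$ (well defined by Corollary~\ref{cor:eleydivrs}) such that $(m u_1, n u_2)$ is a basis of $\Lambda_1$; after replacing $u_1$ by $-u_1$ if necessary we may assume $(u_1,u_2)$ is positively oriented, which changes neither $\Lambda_1$ nor the diagonal form. Let $\zF\co\mathbb{R}^2\to\mathbb{R}^2$ be the orientation-preserving linear isomorphism with $\zF(u_i)=e_i$, so that $\zF(\Lambda_2)=\mathbb{Z}^2$ and $\zF(\Lambda_1)=D\mathbb{Z}^2$ with $D=\mathrm{diag}(m,n)$. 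Then $\zF$ induces an affine bijection $\overline{\zF}\co\mathcal{A}\to\mathcal{A}':=\mathbb{Z}^2/2D\mathbb{Z}^2$, and by the discussion of Hurwitz structure sets preceding Theorem~\ref{thm:hurwitz} the set $\mathcal{HS}':=\overline{\zF}(\mathcal{HS})$ is a Hurwitz structure set over the lattice pair $D\mathbb{Z}^2\subseteq\mathbb{Z}^2$ which is equivalent to $\mathcal{HS}$.

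Next I would realize $\mathcal{HS}'$ by a map. Invoking the realizability statement recalled just before Theorem~\ref{thm:hurwitz} (``every such set $\mathcal{HS}\subseteq\mathcal{A}$ is the Hurwitz structure set associated to some NET map''), applied with lattices $D\mathbb{Z}^2\subseteq\mathbb{Z}^2$, one obtains a NET map $f'$ whose associated lattices are exactly these and whose Hurwitz structure set is $\mathcal{HS}'$. In the factorization $f'=h'\circ g'$ of \S\ref{sec:hurwitzb}, the Euclidean factor $g'$ is then induced by an affine map with linear part $x\mapsto Dx$, so the presentation matrix of $f'$ equals $D$, which is diagonal. Since $\mathcal{HS}'$ is equivalent to $\mathcal{HS}$, the Hurwitz invariant of $f'$ equals that of $f$, so Theorem~\ref{thm:hurwitz} yields $f'\in\text{Hurw}(f)=\mathcal{C}$, which completes the argument.

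The step I expect to require the most care is the appeal to realizability: one must check that the construction of a NET map from a Hurwitz structure set can be carried out so as to produce a map with the prescribed lattice pair $D\mathbb{Z}^2\subseteq\mathbb{Z}^2$ (hence with diagonal presentation matrix), and that the resulting map is genuinely a NET map --- in particular that it has four postcritical points, which is an extra point in the atypical cases $(m,n)\in\{(2,1),(2,2)\}$, where a modular group Hurwitz class of NET maps may also contain non-NET maps. I note that one can sidestep realizability entirely: an orientation-preserving change of basis of $\Lambda_2$ is merely a re-presentation of the same NET map, so $\zF$ above already exhibits a presentation of $f$ itself whose presentation matrix is $D$, and from this viewpoint the corollary is a direct consequence of the elementary divisor theorem, with Theorem~\ref{thm:hurwitz} serving only to record that the construction does not leave the class.
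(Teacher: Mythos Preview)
Your proof is correct and follows essentially the same route the paper implicitly takes: the corollary is stated without proof because it drops out of the elementary divisor theorem together with the definition of equivalence of Hurwitz structure sets and Theorem~\ref{thm:hurwitz}. Your final paragraph's observation---that the change of basis $\zF$ is simply a re-presentation of $f$ itself with diagonal matrix $D$, so that no separate realizability argument is needed---is the cleanest way to see this and avoids the concern you raise about the atypical cases.
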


\begin{cor}\label{cor:hbijection} In the typical cases, there is
a bijection between the set of Hurwitz classes of NET maps with
elementary divisors $(m,n)$ and the set of equivalence classes of
Hurwitz structure sets in groups of the form $(\mathbb{Z}/2m
\mathbb{Z})\oplus (\mathbb{Z}/2n \mathbb{Z})$, were $m$ and $n$ are
positive integers with $m\ge 2$ and $n|m$.
\end{cor}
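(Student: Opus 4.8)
The plan is to combine Theorem~\ref{thm:hurwitz} with the structural facts already assembled in this section to produce the claimed bijection, with the main point being the bookkeeping of exactly what ``equivalence class of Hurwitz structure set'' sees versus what a modular group Hurwitz class of NET maps records. By Theorem~\ref{thm:hurwitz}, the Hurwitz invariant --- the equivalence class of $\mathcal{HS} \subseteq \mathcal{A}$ --- is a complete invariant of the modular group Hurwitz class of a NET map, so the map sending $\mathrm{Hurw}(f)$ to the equivalence class of its Hurwitz structure set is well-defined and injective. The content of the corollary is therefore to identify the image of this map, and to show that in the typical cases the image is exactly the set of all equivalence classes of Hurwitz structure sets sitting in groups of the form $(\mathbb{Z}/2m\mathbb{Z}) \oplus (\mathbb{Z}/2n\mathbb{Z})$ with $m \ge 2$, $n \mid m$.

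First I would fix elementary divisors $(m,n)$ with $m \ge 2$ and $n \mid m$. For a NET map $f$ with these elementary divisors presented as in \S\ref{sec:background}, we have $\Lambda_1 \subseteq \Lambda_2 = \mathbb{Z}^2$ with $\Lambda_2/\Lambda_1 \cong (\mathbb{Z}/m\mathbb{Z}) \oplus (\mathbb{Z}/n\mathbb{Z})$, hence $2\Lambda_1$ has index $4mn$ in $\Lambda_2$, and choosing a basis adapted to the elementary divisor decomposition identifies $\mathcal{A} = \Lambda_2/2\Lambda_1$ with $(\mathbb{Z}/2m\mathbb{Z}) \oplus (\mathbb{Z}/2n\mathbb{Z})$. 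So every NET map with elementary divisors $(m,n)$ has a Hurwitz structure set living in a group of the stated form, and Corollary~\ref{cor:eleydivrs} guarantees the assignment respects the partition into elementary-divisor strata: the map of the previous paragraph restricts to an injection from Hurwitz classes with elementary divisors $(m,n)$ into equivalence classes of Hurwitz structure sets in $(\mathbb{Z}/2m\mathbb{Z}) \oplus (\mathbb{Z}/2n\mathbb{Z})$.

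Next I would establish surjectivity onto this target. The section already records the key realization fact: ``every such set $\mathcal{HS} \subseteq \mathcal{A}$ is the Hurwitz structure set associated to some NET map.'' So given an arbitrary Hurwitz structure set $\mathcal{HS} \subseteq (\mathbb{Z}/2m\mathbb{Z}) \oplus (\mathbb{Z}/2n\mathbb{Z})$, there is a NET map $f$ realizing it. The only thing left to check is that such an $f$ has elementary divisors exactly $(m,n)$ rather than some proper divisor pair; this is where the typicality hypothesis enters. In the typical cases --- that is, $(m,n) \notin \{(2,1),(2,2)\}$ --- every map in the modular group Hurwitz class determined by $\mathcal{HS}$ is a NET map (as recalled in the introduction), and its elementary divisors are forced by the order structure: the sublattice $2\Lambda_1 \subseteq \Lambda_2$ built from the realization has the prescribed index type, so $\Lambda_2/\Lambda_1$ has the prescribed elementary divisors $(m,n)$. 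Together with the well-definedness and injectivity from Theorem~\ref{thm:hurwitz} and Corollary~\ref{cor:eleydivrs}, this yields the bijection.

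The main obstacle is the last step: pinning down that in the typical cases the realizing NET map genuinely has elementary divisors $(m,n)$ and that the construction stays inside the category of NET maps (no collapse of the postcritical set below four points, which is precisely the pathology that the atypical cases $(2,1)$ and $(2,2)$ exhibit and that forces their exclusion here). One must be careful that the equivalence relation on Hurwitz structure sets --- affine isomorphisms $\Lambda_2 \to \Lambda_2'$ carrying $\Lambda_1$ to $\Lambda_1'$ --- is compatible with, and no coarser than, the identification of modular group Hurwitz classes; but this compatibility is exactly the content of Theorem~\ref{thm:hurwitz}, so once the elementary-divisor and NET-ness checks are in place the corollary follows formally.
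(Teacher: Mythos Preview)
Your proposal is correct and follows the approach the paper leaves implicit: the corollary is stated without proof, as it is meant to follow directly from Theorem~\ref{thm:hurwitz} (completeness of the Hurwitz invariant), Corollary~\ref{cor:eleydivrs} (elementary divisors are Hurwitz invariants), the realization fact that every Hurwitz structure set in $\mathcal{A}$ arises from some presentation, and Remark~\ref{remark:hurwbijn} (which isolates the atypical failure). Your write-up makes these ingredients explicit and correctly identifies that the typicality hypothesis is exactly what guarantees the realizing Thurston map has four postcritical points and hence is a genuine NET map.
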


\begin{remark}\label{remark:hurwbijn} In the atypical cases, there are
Hurwitz structure sets for which no Thurston map in the modular group
Hurwitz equivalence class which they determine has four postcritical
points.
\end{remark}

\begin{remark}\label{remark:enuhurw} Theorem~\ref{thm:hurwitz} can be
used to find one representative in every modular group Hurwitz class
of NET maps of small degree.  This is done in \cite{NET} through
degree 30.
\end{remark}

We next prove that the elementary divisors of a NET map form a
complete set of invariants for topological equivalence.  We note that
Thurston equivalence, pure modular group Hurwitz equivalence, modular
group Hurwitz equivalence and topological equivalence are successively
weaker equivalence relations.

\begin{thm}\label{thm:eleydivrs} Two NET maps are topologically
equivalent if and only if their elementary divisors are equal.
\end{thm}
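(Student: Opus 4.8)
The plan is to prove the two directions separately. For the forward direction, suppose $f$ and $f'$ are topologically equivalent, so there are orientation-preserving homeomorphisms $\zv,\zj\co S^2\to S^2$ with $\zv\circ f=f'\circ \zj$. (Note that we do \emph{not} assume $\zv,\zj$ respect postcritical sets; but since topological equivalence intertwines the dynamics, $\zj$ carries the branch locus and the marked four points of the Euclidean model of $f$ to those of $f'$.) Using the factorization $f=h\circ g$ from \cite{cfpp} and the fact that $g$ is induced by an affine map $\zF$ with $\zF(\zL_2)=\zL_1$, the local-degree data of $f$ at the four non-critical preimages of $P$ matches that of $f'$. The cleanest route: the elementary divisors $(m,n)$ of $f$ are exactly the elementary divisors of the sublattice inclusion $\zL_1\subseteq\zL_2$, which in turn is recovered from the deck-transformation structure of the covering $\zp_1\co\mathbb{R}^2\to S^2$ composed with $f$. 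A topological equivalence $(\zv,\zj)$ lifts (by the lifting lemmas, Lemmas~\ref{lemma:inducedown} and~\ref{lemma:induceup}) to an affine isomorphism $\mathbb{R}^2\to\mathbb{R}^2$ carrying the pair $(\zL_1,\zL_2)$ for $f$ to the corresponding pair for $f'$, and such an isomorphism preserves the index and the elementary-divisor type of the inclusion. Hence $(m,n)=(m',n')$.

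For the converse, suppose $f$ and $f'$ have the same elementary divisors $(m,n)$. By Corollary~\ref{cor:dgl}, or more directly by the observation in the introduction that every NET map is topologically equivalent to the Euclidean NET map induced by the diagonal matrix $\mathrm{diag}(m,n)$, it suffices to show that $f$ is topologically equivalent to this standard Euclidean model $g_{m,n}$. Indeed, if both $f$ and $f'$ are topologically equivalent to $g_{m,n}$, then since topological equivalence is an equivalence relation, $f$ is topologically equivalent to $f'$. So the whole content reduces to the single claim: \emph{every NET map with elementary divisors $(m,n)$ is topologically equivalent to $g_{m,n}$}. To prove this, write the presentation $f=h\circ g$ with $g$ Euclidean induced by $\zF(x)=Ax+b$, $A=QDR$ with $Q,R\in\mathrm{SL}(2,\mathbb{Z})$ and $D=\mathrm{diag}(m,n)$. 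The matrices $Q,R$ and the translation $b$, together with the homeomorphism $h$, only change $f$ within its topological equivalence class: pre- and post-composing with homeomorphisms of $S^2$ (not required to fix $P$) is exactly the freedom allowed by topological equivalence. More precisely, $R\in\mathrm{SL}(2,\mathbb{Z})$ descends to a homeomorphism of the source four-marked sphere and $Q\in\mathrm{SL}(2,\mathbb{Z})$ to one of the target, so $g$ is topologically equivalent to the map induced by $D$ alone; the translation $b\in\zL_1$ can be absorbed by a translation of $\mathbb{R}^2$, which descends to a deck-type homeomorphism; and $h$ is by definition a homeomorphism, so $h\circ g$ is topologically equivalent to $g$. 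Assembling these, $f$ is topologically equivalent to $g_{m,n}$.

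I would organize the write-up as: (i) reduce the theorem to the boxed claim using transitivity of topological equivalence; (ii) prove the forward direction by the lifting argument showing elementary divisors are a topological invariant; (iii) prove the claim by systematically stripping off $h$, then $b$, then $R$, then $Q$ via the appropriate homeomorphisms, citing Lemmas~\ref{lemma:inducedown} and~\ref{lemma:induceup} to pass between affine maps and their induced branched covers. One subtlety to address explicitly: in the atypical cases $(2,1)$ and $(2,2)$ the postcritical set may have fewer than four points, so "NET map" is being used loosely; but this does not affect the argument since the Euclidean model and the local-degree/lattice data are still well defined, and topological equivalence does not reference the cardinality of $P$.

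The main obstacle I anticipate is the forward direction — making rigorous that a topological equivalence, which a priori is just a pair of homeomorphisms of $S^2$ with no compatibility with the Euclidean structures, actually lifts to an affine isomorphism of $\mathbb{R}^2$ matching the two lattice pairs. The lifting lemmas as stated (Lemmas~\ref{lemma:inducedown} and~\ref{lemma:induceup}) are phrased for maps respecting the quotient structures $\pi_i$, so one must first argue that a topological equivalence can be isotoped to respect the postcritical sets and the non-critical preimage sets, and then that the resulting map of four-marked spheres lifts through the characteristic coverings $\zp_1$. This is where one uses that the covering $\zp_i$ is the unique (up to the described ambiguity) orbifold covering of the $(2,2,2,2)$-sphere of the relevant type, so any homeomorphism of pairs lifts. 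Once that is in place, the invariance of elementary divisors under affine isomorphism of lattice pairs is immediate, and the rest is bookkeeping.
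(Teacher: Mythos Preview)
Your backward direction is essentially the paper's: reduce to Euclidean models, strip off the homeomorphism $h$ and the translation $b$, and use the Smith normal form factorization $A=QDR$ with $Q,R\in\mathrm{SL}(2,\bbZ)$ descending to homeomorphisms of the source and target spheres. No issues there.

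For the forward direction, the paper takes a different and slicker route than your lifting argument. Rather than lifting the pair $(\zv,\zj)$ to a single affine map carrying $(\zL_1,\zL_2)$ to $(\zL_1',\zL_2')$, the paper first observes that $\zv\circ f = f'\circ\zj = \zj\circ(\zj^{-1}f'\zj)$; since conjugation is a modular group Hurwitz equivalence, Corollary~\ref{cor:eleydivrs} lets one replace $f'$ by its conjugate and reduce to the case $f'=\zv\circ f$ for a single homeomorphism $\zv$. Then $\zv$ takes critical values of $f$ to critical values of $f'$, and in the typical case these coincide with the postcritical sets, so $\zv(P_f)=P_{f'}$. Now no lifting is needed at all: one simply checks that $\zL_1,\zL_2,p_1,q_1,\zv\circ p_2,q_2$ is a valid presentation for $f'$, so $f$ and $f'$ share the same lattice pair. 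Your direct lifting approach can also be made to work---lift $\zj$ through $\zp_1$ and $\zv$ through $\zp_2$, then run the cube argument from the proof of Theorem~\ref{thm:hurwitz} to see the two lifts differ by an element of $\zG_2'$---but you have not actually carried this out, and the obstacle you flag in your last paragraph is exactly this missing step.

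There is a genuine gap in your handling of the atypical cases. You write that ``the postcritical set may have fewer than four points, so `NET map' is being used loosely''; this is a misreading. A NET map by definition has exactly four postcritical points. What is atypical about $(m,n)\in\{(2,1),(2,2)\}$ is that the set $V_f$ of \emph{critical values} has only two or three elements, so $V_f\subsetneq P_f$. The only thing a topological equivalence gives you directly is $\zv(V_f)=V_{f'}$; in the atypical cases this does \emph{not} yield $\zv(P_f)=P_{f'}$, and your lifting argument (which needs $\zv$ to respect the four marked points in order to invoke Lemma~\ref{lemma:induceup} for $\zp_2$) breaks down. The paper handles this separately and elementarily: the number of critical values is itself a topological invariant, and it already determines the elementary divisors---two critical values forces $(2,1)$ and three forces $(2,2)$. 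You need to add this case analysis; your remark that ``topological equivalence does not reference the cardinality of $P$'' is true but irrelevant, since the argument does.
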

  \begin{proof} Let $f$ and $f'$ be NET maps.  We first prove the
forward implication.

So suppose that $f$ and $f'$ are topologically equivalent.  Then there
exist homeomorphisms $\zv,\zj\co S^2\to S^2$ such that $\zv\circ
f=f'\circ \zj=\zj\circ (\zj^{-1}\circ f'\circ \zj)$.
Corollary~\ref{cor:eleydivrs} implies that $f'$ and $\zj^{-1}\circ
f'\circ \zj$ have the same elementary divisors, so we may assume that
$\zv\circ f=\zj\circ f'$.  Combining $\zv$ and $\zj$, we may even
assume that $f'=\zv\circ f$.  It follows that $\zv$ maps the critical
values of $f$ bijectively to the critical values of $f'$.  As
discussed in the introduction, a NET map has exactly two critical
values if and only if its elementary divisors are $(2,1)$ and it has
exactly three critical values if and only if its elementary divisors
are $(2,2)$.  So we may assume that we are not in one of these
atypical cases.  Thus we have that $\zv(P_f)=P_{f'}$.  Now let
$\zL_1$, $\zL_2$, $p_1$, $q_1$, $p_2$ and $q_2$ be usual lattices and
branched covering maps for $f$, so that $f\circ p_1\circ q_1=p_2\circ
q_2$.  Then
  \begin{equation*}
f'\circ p_1\circ q_1=\zv\circ f\circ p_1\circ q_1=(\zv\circ p_2)\circ
q_2.
  \end{equation*}
This and the fact that $\zv(P_f)=P_{f'}$ imply that $\zL_1$,
$\zL_2$, $p_1$, $q_1$, $\zv\circ p_2$ and $q_2$ are corresponding
lattices and branched covering maps for $f'$.  Since $\zL_1$ and
$\zL_2$ determine the elementary divisors for both $f$ and $f'$, the
elementary divisors of $f$ and $f'$ are equal.  This proves the
forward implication of Theorem~\ref{thm:eleydivrs}.

To prove the backward implication, suppose that the elementary
divisors of $f$ and $f'$ are equal.  Since every NET map is
topologically equivalent to a Euclidean NET map and topologically
equivalent NET maps have equal elementary divisors, we may assume that
$f$ and $f'$ are Euclidean.  Since the translation term in a NET map
presentation preserves equivalence, we may assume that $f$ and $f'$
are induced by maps which are not just affine, but even linear.  We
may also assume that $f$ and $f'$ have presentations with
$\zL_2=\zL'_2=\bbZ^2$.  Now we apply the well-known result that if $A$
and $A'$ are $2\times 2$ matrices of integers with positive
determinants and equal elementary divisors, then there exist $Q,R\in
\text{SL}(2,\bbZ)$ such that $A'=QAR$.  The matrices $Q$ and $R$
induce homeomorphisms of $S^2$ as in Lemma 2.1, which provide an
equivalence between $f$ and $f'$.

This proves Theorem~\ref{thm:eleydivrs}.
\end{proof}

The definition of Hurwitz invariant given above has the virtue of
expediency but not the virtue of naturality.  The point here is that
the definition of equivalence of Hurwitz structure sets in the finite
group $\mathcal{A}$ relies on presenting the group as $\zL_2/2\zL_1$
and lifting to $\zL_2$ rather than working in $\mathcal{A}$
intrinsically.  The rest of this section is devoted to giving another
definition of equivalence of Hurwitz structure sets which does not
rely on lifting to $\zL_2$.

For this we define the notion of \emph{special automorphism}.  Let
$\mathcal{A}$ be a finite Abelian group generated by two elements.
There exist unique positive integers $m$ and $n$ with $n|m$ such that
$\mathcal{A}\cong(\mathbb{Z}/m \mathbb{Z})\oplus (\mathbb{Z}/n
\mathbb{Z})$.  (The $m$ and $n$ here differ from the usual $m$ and $n$
by factors of 2.)  Let $\zv\co \mathcal{A}\to \mathcal{A}$ be a group
automorphism.  Then $\zv(n\mathcal{A})= n\mathcal{A}$, and so $\zv$
induces an automorphism of $\mathcal{A}/n\mathcal{A}\cong
(\mathbb{Z}/n\mathbb{Z})\oplus (\mathbb{Z}/n\mathbb{Z})$.
Automorphisms of the latter group are described by matrices just as
over a vector space.  In particular, determinants are meaningful.  By
choosing an isomorphism between $\mathcal{A}/n\mathcal{A}$ and
$(\mathbb{Z}/n \mathbb{Z})\oplus (\mathbb{Z}/n \mathbb{Z})$, the
automorphism $\zv$ induces an automorphism $\overline{\zv}$ of
$(\mathbb{Z}/n \mathbb{Z})\oplus (\mathbb{Z}/n \mathbb{Z})$.  The map
$\overline{\zv}$ depends on the choice of isomorphism between
$\mathcal{A}/n\mathcal{A}$ and $(\mathbb{Z}/n \mathbb{Z})\oplus
(\mathbb{Z}/n \mathbb{Z})$, but its determinant does not.  (This is
analogous to changing bases of a vector space.)  We say that $\zv$ is
a special automorphism if $\text{det}(\overline{\zv})=1$.  We
take this condition to be vacuously true when $n=1$.

Now that we have a definition, we can compute as follows.  Choose an
isomorphism between $\mathcal{A}$ and $(\mathbb{Z}/m \mathbb{Z})\oplus
(\mathbb{Z}/n \mathbb{Z})$.  Let $u$ and $v$ be the elements of
$\mathcal{A}$ such that $u$ corresponds to $(1,0)$ and $v$ corresponds
to $(0,1)$.  Let $\zv\co \mathcal{A}\to \mathcal{A}$ be a group
automorphism.  There exist integers $a$, $b$, $c$, $d$ so that
$\zv(xu+yv)=(ax+by)u+(cx+dy)v$ for all integers $x$ and $y$.  Then
$\zv$ is a special automorphism if and only if $ad-bc\equiv 1 \text{
mod } n$.

Now suppose that our finite Abelian group has the form
$\mathcal{A}=\zL_2/2\zL_1$, as usual, where $\zL_2=\mathbb{Z}^2$.
Suppose that $\zF\co \zL_2\to \zL_2$ is an orientation-preserving
affine isomorphism such that $\zF(\zL_1)=\zL_1$.  Then $\zF(x)=Ax+b$,
where $A\in \text{SL}(2,\mathbb{Z})$ and $b\in \zL_1$.  So the map
$x\mapsto Ax$ induces a special automorphism of $\mathcal{A}$, and the
map $x\mapsto x+b$ induces a translation by an element of order at
most 2.  Now we see how to define equivalence of Hurwitz structure
sets in $\mathcal{A}$ without lifting to $\zL_2$: Hurwitz structure
sets $\cH\cS$ and $\cH\cS'$ in $\mathcal{A}$ are equivalent if and
only if there exists a special automorphism $\zv\co \mathcal{A}\to
\mathcal{A}$ and an element $b\in \mathcal{A}$ with $2b=0$ such that
$\cH\cS'=\zv(\cH\cS)+b$.  This definition coincides with the
definition given above if every special automorphism of $\mathcal{A}$
lifts to $\zL_2$ because it is easy to lift the translation terms.
This is the content of the next lemma.   We have proved:

\begin{thm} \label{thm:hurwitz_special} The assignment $f \mapsto
\mathcal{HS}\subset \mathcal{A}$ induces a bijection between modular
Hurwitz classes of typical NET maps having elementary divisors
$(m,n)$ and orbits of Hurwitz structure sets under the action of the
group generated by the special automorphisms of $\mathcal{A}\cong 
(\bbZ/2m\bbZ)\oplus (\bbZ/2n \bbZ)$ and translations by elements of
order 2.
\end{thm}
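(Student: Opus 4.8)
Throughout, I write $\mathcal{A}=\zL_2/2\zL_1$ as usual.

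The plan is to read the theorem off the two completeness results already in hand, Theorem~\ref{thm:hurwitz} and Corollary~\ref{cor:hbijection}, once the ``lifting'' notion of equivalence of Hurwitz structure sets has been identified with the intrinsic one. By Theorem~\ref{thm:hurwitz} the Hurwitz invariant is a complete invariant of modular Hurwitz classes, and Corollary~\ref{cor:hbijection} repackages this, in the typical case and for fixed elementary divisors $(m,n)$, as a bijection $f\mapsto\mathcal{HS}$ between modular Hurwitz classes of NET maps and equivalence classes of Hurwitz structure sets in $(\bbZ/2m\bbZ)\oplus(\bbZ/2n\bbZ)$, two such sets being equivalent when related by the automorphism $\overline{\zF}$ of $\mathcal{A}$ induced by an orientation-preserving affine isomorphism $\zF$ of $\bbZ^2$ preserving $\zL_1$. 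So it remains only to check that this relation is generated by special automorphisms and order-$2$ translations, and then the theorem is a restatement. The implication ``lifting-equivalent $\Rightarrow$ intrinsically equivalent'' is already in the discussion preceding the theorem: $\zF(x)=Ax+b$ with $\zF(\zL_1)=\zL_1$ induces on $\mathcal{A}$ a special automorphism followed by translation by an element of order at most $2$. The reverse implication needs two things: the trivial remark that an element of $\mathcal{A}=\zL_2/2\zL_1$ of order at most $2$ lifts to an element of $\zL_1$, hence to a translation preserving $\zL_1$; and the assertion flagged above as ``the content of the next lemma,'' that every special automorphism of $\mathcal{A}$ lifts to an element of $\text{SL}(2,\bbZ)$ preserving $\zL_1$.

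I would prove that lemma as follows. Fix a basis of $\zL_2=\bbZ^2$ adapted to $\zL_1$, so $2\zL_1=2m\bbZ\times 2n\bbZ$ and $\mathcal{A}=\bbZ^2/2\zL_1$. A direct check shows that any $A\in\text{SL}(2,\bbZ)$ with $(m/n)\mid A_{12}$ preserves $2\zL_1$ and hence induces an automorphism of $\mathcal{A}$; since $2m\bbZ^2\subseteq 2\zL_1$ (using $n\mid m$) this induced automorphism depends only on $A$ modulo $2m$, so by surjectivity of the reduction maps $\text{SL}(2,\bbZ)\to\text{SL}(2,\bbZ/2m\bbZ)$ it suffices to realize the given special $\phi$ by some $A_0\in\text{SL}(2,\bbZ/2m\bbZ)$ with $(m/n)\mid (A_0)_{12}$. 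Decompose everything into prime-power parts: $\mathcal{A}\cong\prod_p\bigl((\bbZ/p^e\bbZ)\oplus(\bbZ/p^f\bbZ)\bigr)$ with $f=v_p(2n)\le e=v_p(2m)$, and likewise for $\text{SL}(2,\bbZ/2m\bbZ)$, for $\text{Aut}(\mathcal{A})$, for the special condition, and for the divisibility $(m/n)\mid (A_0)_{12}$. In a prime-part with $e=f$ there is nothing to prove: the $p$-component of $\phi$ is a $2\times 2$ matrix over $\bbZ/p^e\bbZ$ of determinant $\equiv 1$, so it already lies in $\text{SL}(2,\bbZ/p^e\bbZ)$. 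In a prime-part with $e>f$, write the $p$-component of $\phi$ with entries $\begin{pmatrix}a&b\\c&d\end{pmatrix}$ in the sense of the discussion above, with $p^{e-f}\mid b$ and $ad-bc\equiv 1\pmod{p^f}$; since $\phi$ sends the order-$p^e$ generator $u$ to an element of order $p^e$, the entry $a$ must be a unit mod $p^e$. Complete $(a,b)$ to a matrix $A_0\in\text{SL}(2,\bbZ/p^e\bbZ)$; the possible bottom rows are $(c_0+ta,\,d_0+tb)$ for $t\in\bbZ/p^e\bbZ$, and one solves $c_0+ta\equiv c\pmod{p^f}$ for $t$ mod $p^f$ because $a$ is a unit. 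The leftover requirement $d_0+tb\equiv d\pmod{p^f}$ then becomes, after multiplying by $a$ and using $ad_0-bc_0\equiv 1$, precisely $ad-bc\equiv 1\pmod{p^f}$, i.e.\ the hypothesis that $\phi$ is special. Reassembling the $A_0$'s by the Chinese Remainder Theorem and lifting to $\text{SL}(2,\bbZ)$ yields the required $A$.

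I expect the only delicate point to be this last step of the lemma in the case $e>f$: one must notice that the automorphism hypothesis forces the entry $a$ to be a unit (so that a single row can be completed), and then verify that the compatibility of the two residual congruences --- one modulo $p^e$ in the completion step, one modulo $p^f$ in the adjustment step --- is governed exactly, with no slack, by the special condition. Everything else is routine: surjectivity of reduction maps for $\text{SL}(2,\bbZ)$, the Chinese Remainder Theorem, the adapted-basis normalization, and the fact that the reduction of the theorem to the lemma is purely formal once Theorem~\ref{thm:hurwitz} and Corollary~\ref{cor:hbijection} are available.
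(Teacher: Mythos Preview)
Your reduction of the theorem to Lemma~\ref{lemma:special} matches the paper exactly: both invoke Theorem~\ref{thm:hurwitz} and Corollary~\ref{cor:hbijection}, observe that the ``lifting'' equivalence obviously implies the intrinsic one, note that order-$2$ translations lift trivially, and reduce the converse to lifting special automorphisms.

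Your proof of the lemma itself, however, takes a genuinely different route from the paper's. The paper works globally over $\bbZ$: after arranging $a\neq 0$, it uses the Chinese Remainder Theorem to modify $b$ to a $b'\equiv b\pmod m$ with $\gcd(a,b')=1$ (the nontrivial point being that if a prime $p$ divides both $a$ and $b$, then $p\nmid m$, which uses both the special condition and the order-$m$ condition on $\zv(1,0)$), and then writes down explicit $c',d'$ making the determinant equal to $1$. You instead localize: you reduce to $\text{SL}(2,\bbZ/2m\bbZ)$ via surjectivity of the reduction map, split into prime-power parts via CRT, and in each $p$-part with $e>f$ you use that $a$ is a $p$-adic unit (forced by the order of $\phi(u)$) to complete $(a,b)$ to an $\text{SL}_2$ matrix and then adjust the bottom row modulo $p^f$, checking that the special condition $ad-bc\equiv 1\pmod{p^f}$ is exactly what makes the two congruences compatible. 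Both arguments are correct; yours is a bit more structural and makes the role of the hypothesis transparent at each prime, while the paper's is more elementary and self-contained, avoiding the appeal to surjectivity of $\text{SL}(2,\bbZ)\to\text{SL}(2,\bbZ/N\bbZ)$.
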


\begin{lemma}\label{lemma:special}  Let $m$ and $n$ be
positive integers with $n|m$.  Let $\zv$ be a special automorphism of
$(\mathbb{Z}/m \mathbb{Z})\oplus (\mathbb{Z}/n \mathbb{Z})$.  Then
there exists an orientation-preserving group automorphism $\zF\co
\mathbb{Z}^2\to \mathbb{Z}^2$ which stabilizes the subgroup
$\left<(m,0),(0,n)\right>$ and induces the map $\zv$.  To state this
more concretely, let $a$, $b$, $c$, $d$ be integers such that
$\zv(1,0)=a(1,0)+c(0,1)$ and $\zv(0,1)=b(1,0)+d(0,1)$.  Then there
exists $\left[\begin{smallmatrix}a' & b' \\ c' & d'
\end{smallmatrix}\right]\in \text{SL}(2,\mathbb{Z})$ such that
$(a',b')\equiv (a,b) \mod m$ and $(c',d')\equiv (c,d)\mod n$.
\end{lemma}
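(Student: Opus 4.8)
The plan is to reduce the lemma to its concrete matrix form and then invoke surjectivity of a reduction map between special linear groups. Put $k:=m/n$, let $L=\langle(m,0),(0,n)\rangle\subseteq\bbZ^2$, and write $A=(\bbZ/m\bbZ)\oplus(\bbZ/n\bbZ)=\bbZ^2/L$, the group on which $\zv$ acts. Let $M=\left[\begin{smallmatrix}a&b\\c&d\end{smallmatrix}\right]$ be the matrix whose columns are $\zv(1,0)$ and $\zv(0,1)$, so that $M$ induces $\zv$ on $A$. It suffices to produce $M'=\left[\begin{smallmatrix}a'&b'\\c'&d'\end{smallmatrix}\right]\in\text{SL}(2,\bbZ)$ with $(a',b')\equiv(a,b)\pmod m$ and $(c',d')\equiv(c,d)\pmod n$. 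Indeed, such an $M'$ is an orientation-preserving automorphism of $\bbZ^2$; it carries $(m,0)$ and $(0,n)$ into $L$ (using $n\mid m$, and $k\mid b'$, which holds since $b'\equiv b\pmod m$, $k\mid m$ and $k\mid b$); and $M'-M$ sends $\bbZ^2$ into $L$, so $M'$ induces the same map $\zv$ on $A$. Hence $M'$ is the sought $\zF$.

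Before constructing $M'$ I would record three facts forced by $\zv$ being a special automorphism. Since $\zv(0,1)$ has order dividing $n$ in $A$, its first coordinate $b$ satisfies $m\mid nb$, i.e.\ $k\mid b$. Since $\zv$ is an automorphism, for every prime $p\mid k$ the induced automorphism of $A/pA$ is again an automorphism: if $p\nmid n$, then $A/pA$ is cyclic of order $p$ and $\zv$ acts by multiplication by $a$; if $p\mid n$, then $A/pA\cong(\bbZ/p\bbZ)^2$ and $\zv$ acts by a matrix of determinant $ad-bc$, which must be a unit mod $p$, while $b\equiv 0\pmod p$. In both cases $p\nmid a$, so $\gcd(a,k)=1$. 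Finally, specialness of $\zv$ is exactly the condition $ad-bc\equiv 1\pmod n$ (recorded in the discussion preceding the lemma).

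For the construction, write $ad-bc=1+nE$ with $E\in\bbZ$. Because $\gcd(a,k)=1$, choose $\zd\in\bbZ$ with $a\zd\equiv-E\pmod k$ and set $\tilde d:=d+n\zd$. Then
\[
a\tilde d-bc=(ad-bc)+an\zd=1+n(E+a\zd),
\]
and $E+a\zd\equiv 0\pmod k$, so $n(E+a\zd)$ is divisible by $nk=m$; hence $a\tilde d-bc\equiv 1\pmod m$, i.e.\ $\left[\begin{smallmatrix}a&b\\c&\tilde d\end{smallmatrix}\right]$ reduces to an element of $\text{SL}(2,\bbZ/m\bbZ)$. By surjectivity of the reduction homomorphism $\text{SL}(2,\bbZ)\to\text{SL}(2,\bbZ/m\bbZ)$, there is $M'\in\text{SL}(2,\bbZ)$ with $M'\equiv\left[\begin{smallmatrix}a&b\\c&\tilde d\end{smallmatrix}\right]\pmod m$. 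Then $(a',b')\equiv(a,b)\pmod m$, and $(c',d')\equiv(c,\tilde d)\pmod m$, whence $(c',d')\equiv(c,d)\pmod n$ since $\tilde d\equiv d\pmod n$. This $M'$ is the required $\zF$.

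The argument is short, so there is no single hard step; the part that needs care is the dictionary between $\zv$ and its matrix $M$, specifically isolating exactly the three consequences of ``$\zv$ is a special automorphism'' ($k\mid b$, $\gcd(a,k)=1$, and $ad-bc\equiv 1\bmod n$) that make the determinant correction $d\rightsquigarrow d+n\zd$ succeed modulo $m$. If one prefers not to quote surjectivity of $\text{SL}(2,\bbZ)\to\text{SL}(2,\bbZ/m\bbZ)$, the same three facts allow one to finish instead by an elementary stepwise solution of $\det\bigl(M+\text{diag}(m,n)X\bigr)=1$ over the integer matrices $X$.
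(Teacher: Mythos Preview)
Your proof is correct and takes a genuinely different route from the paper's. The paper works entirely by hand: it first replaces $b$ by $b'=b+zm$ (with $z$ chosen via the Chinese remainder theorem) so that $\gcd(a,b')=1$, and then solves explicitly for $c',d'$ using a B\'ezout relation $ax+b'y=1$; the key arithmetic step there is showing that any prime dividing both $a$ and $b$ cannot divide $m$, which is the same content as your two observations $k\mid b$ and $\gcd(a,k)=1$. Your argument instead adjusts $d$ to $\tilde d=d+n\zd$ so that the determinant becomes $1\bmod m$, and then quotes the surjectivity of $\text{SL}(2,\bbZ)\to\text{SL}(2,\bbZ/m\bbZ)$. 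Your approach is cleaner and more conceptual, at the cost of invoking an external (though standard) lemma; the paper's is fully self-contained and slightly more constructive. Both rely on the same structural input about $a$, $b$, and the determinant, just packaged differently.
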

  \begin{proof} We prove the concrete formulation of the lemma.  In
the situation of the concrete formulation, if $a=0$, then we may set
$a=m$.  Hence we may assume that $a\ne 0$.  The assumptions imply
that $ad-bc\equiv 1 \mod n$.

Now we define an integer $b'$ such that $b'\equiv b\mod m$, and in the
next paragraph we prove that $\gcd(a,b')=1$.  Because $a\ne 0$, only
finitely many primes divide $a$.  Using the Chinese remainder theorem,
we find an integer $z$ such that for every prime $p$ dividing $a$ we
have that
  \begin{equation*}
z\equiv \begin{cases}
0 \mod p &\text{ if }p\nmid b\\
1 \mod p &\text{ if }p\mid b.
\end{cases}
  \end{equation*}
We set $b'=b+zm$.

Now we show that $\gcd(a,b')=1$.  If $p$ is a prime with $p\mid a$ and
$p\nmid b$, then $p\mid zm$ and $p\nmid b'$.  It remains to consider
primes $p$ such that $p\mid a$ and $p\mid b$.  In this case $p\nmid n$
because $ad-bc\equiv 1\mod n$.  On the other hand, if $p\mid
\frac{m}{n}$, then $n\mid \frac{m}{p}$ and
  \begin{equation*}
\frac{m}{p}\zv(1,0)=\frac{m}{p}a(1,0)+\frac{m}{p}c(0,1)=0+0=0.
  \end{equation*}
This is impossible because $\zv(1,0)$ has order $m$.  Hence $p\nmid n$
and $p\nmid \frac{m}{n}$, and so $p\nmid m$.  Hence $p\nmid zm$, and
so $p\nmid b'$.  Therefore $\gcd(a,b')=1$.

Since $\gcd(a,b')=1$, there exist integers $x$ and $y$ such that
$ax+b'y=1$.  Set
  \begin{equation*}
c'=c-y(1-(ad-b'c)), \quad d'=d+x(1-(ad-b'c)) \quad\text{and}\quad a'=a.
  \end{equation*}
Then $(a',b')\equiv (a,b) \mod m$, $(c',d')\equiv
(c,d)\mod n$ and
  \begin{equation*}
\begin{aligned}
\left|\begin{matrix}a' & b' \\ c' & d' \end{matrix}\right| & =
a(d+x(1-(ad-b'c)))-b'(c-y(1-(ad-b'c)))\\
 & =ad-b'c+(ax+b'y)(1-(ad-b'c))=1.
\end{aligned}
  \end{equation*}

Thus we have a matrix in $\text{SL}(2,\mathbb{Z})$ which satisfies the
desired congruences.  We take $\zF$ to be the automorphism of
$\mathbb{Z}^2$ whose matrix with respect to the standard basis is this
matrix.  It remains to show that $\zF$ stabilizes
$\zL=\left<(m,0),(0,n)\right>$.  Because every element of $\zL$ is
divisible by $n$, it is clear that the second coordinate of every
element in $\zF(\zL)$ is divisible by $n$.  Because $\zv(0,1)$
has order $n$, the integer $b'$ must be divisible by $m/n$.  Now we see
that the first coordinate of every element in $\zF(\zL)$ is
divisible by $m$.

This proves Lemma~\ref{lemma:special}.
\end{proof}

\section{Computing the virtual multi-endomorphism}
\label{sec:exvme}

In this section, we illustrate the calculation of the virtual
multi-endomorphism in a concrete example.

We work with a NET map $f$ given by
circling one of the large dots in Figure~\ref{fig:PrenDgm}. Our
computations are independent of which vertex is circled.

The diagram determines (i) an ordered basis of $\zL_1$ given by
$\zl_1=(6,0)$ and $\zl_2=(0,1)$, and (ii) a fundamental domain for
$\Gamma_1$, which is the parallelogram drawn.  The vertices in this
fundamental domain which map to elements of $P_2=P_f$ under the
natural projection to $S^2_1$ are $(1,0)$, $(11,0)$, $(2,0)$,
$(10,0)$, $(1,1)$, $(11,1)$, $(2,1)$, and $(10,1)$.

We identify the pure modular group with $G=\overline{\zG}(2)$.  We
next find additional congruence conditions defining the subgroup $G_f$
of pure modular group liftables as follows.

Suppose $M=\left[\begin{smallmatrix}a & b \\ c &
d\end{smallmatrix}\right]\in\zG(2)$. 

For $M$ to stabilize $\zL_1$, we must have that $M \zl_1\in \zL_1$.
This imposes no restrictions on $M$.  It must also be true that $M
\zl_2\in \zL_1$.  This inclusion is equivalent to the congruence
$b\equiv 0\text{ mod }6$.  So if $M\in \zG(2)$, then $M$
stabilizes $\zL_1$ if and only if $b\equiv 0\text{ mod }6$.

In addition to the condition that $M$ stabilizes $\zL_1$, we
need that the map which $M$ induces on $\mathcal{A}=\zL_2/2\zL_1$
fixes the elements of our Hurwitz structure set $\cH \cS=\{\pm
(1,0),\pm (2,0),\pm (1,1),\pm (2,1)\}$ up to $\pm 1$.  An easy
computation shows that $M$ takes $(1,0)$ and $(1,1)$ to themselves up
to $\pm 1$ if and only if $a\equiv \pm 1\text{ mod }12$ and $b\equiv
0\text{ mod }12$.  The corresponding conditions for $(2,0)$ and
$(2,1)$ are then also satisfied.  So $G_f$ contains the images in
$\text{PSL}(2,\bbZ)$ of all such matrices $M$ in $\text{SL}(2,\bbZ)$
such that $a\equiv \pm 1\text{ mod } 12$, $b\equiv 0\text{ mod } 12$,
$c\equiv 0\text{ mod } 2$ and $d\equiv a \text{ mod } 12$.  To compute
$G_f$, in addition to considering multiplication by such matrices $M$,
we must also consider multiplication by such matrices $M$ together
with the translation by a vector $b$ in $2\zL_2\cap
\{0,\zl_1,\zl_2,\zl_1+\zl_2\}=\{0,\zL_1\}$.  We find that this adds no
more elements to $G_f$.  We conclude that $G_f$ is the image in
$\text{PSL}(2,\bbZ)$ of all matrices $M=\left[\begin{smallmatrix}a & b
\\ c & d\end{smallmatrix}\right]\in \text{SL}(2,\bbZ)$ such that
$a\equiv \pm 1\text{ mod } 12$, $b\equiv 0\text{ mod } 12$, $c\equiv
0\text{ mod } 2$ and $d\equiv a \text{ mod } 12$.

The simplest nontrivial element $\zv$ of $G_f$ is represented by the
matrix $\left[\begin{smallmatrix}1 & 0 \\ 2 & 1
\end{smallmatrix}\right]$.  We now compute the image $\widetilde{\zv}$ of
$\zv$ under the modular group virtual endomorphism.

A horizontal line in $\bbR_1^2$ which avoids $\bbZ^2$ maps to a simple
closed curve in $S_1^2$ which separates two points of $P_2$ from the
other two points of $P_2$.  So the slope function $\zm_f$ of $f$ maps
slope 0 to a slope.  To evaluate $\zm_f(0)$, consider
Figure~\ref{fig:CmpuMu}.  The line segment $a$ has ordinary slope 0,
Its endpoints map to $P_2$ and no element of its interior maps to
$P_1\cup P_2$.  Applying Theorem 5.3 of \cite{cfpp} with the line
segment $S$ there equal to $a$, we find that $\zm_f(0)$ is the slope
relative to the basis $(\zl_1,\zl_2)$ of the line segment joining the
centers of the spin mirrors whose endpoints are the endpoints of $a$.
The slope relative to $(\zl_1,\zl_2)$ of the line segment joining
$(0,-1)$ and $(6,1)$ is 2, so $\zm_f(0)=2$.

We compute $\zm_f(\infty )$ in the same way using the line segment
$b$.  The result is the slope relative to $(\zl_1,\zl_2)$ of the line
segment joining $(0,0)$ and $(6,1)$, namely, 1.  We have the top of 
Figure~\ref{fig:MuDgm}.

As in Section 6 of \cite{fkklpps}, we find that the pullback map
$\zm_\zv$ on slopes induced by $\zv$ is given by the matrix
$\left[\begin{smallmatrix}1 & -2 \\ 0 & 1 \end{smallmatrix}\right]$.
This yields the left side of Figure~\ref{fig:MuDgm}.

In order to establish the bottom of Figure~\ref{fig:MuDgm}, we must
evaluate $\zm_f(-2)$.  For this we use line segment $c$ in
Figure~\ref{fig:CmpuMu}.  This computation is a bit more complicated
than for $a$ and $b$ because $c$ crosses two spin mirrors.  We
traverse $c$ starting at $(2,0)$.  We meet the spin mirror centered at
$(6,1)$ and spin to line segment $c'$.  We eventually meet the spin
mirror centered at $(12,1)$ and spin to line segment $c''$.  We
conclude that $\zm_f(-2)$ is the slope relative to $(\zl_1,\zl_2)$ of
the line segment joining $(0,-1)$ and $(18,3)$.  Hence
$\zm_f(-2)=\frac{4}{3}$.  We now have the bottom of
Figure~\ref{fig:MuDgm}. 

Because Figure~\ref{fig:MuDgm} is commutative, the matrix representing
$\zm_{\widetilde{\zv}}$ is 
  \begin{equation*}
\left[\begin{matrix}1 & 4 \\ 1 & 3 \end{matrix}\right]
\left[\begin{matrix}1 & 2 \\ 1 & 1 \end{matrix}\right]^{-1}=
\left[\begin{matrix}1 & 4 \\ 1 & 3 \end{matrix}\right]
\left[\begin{matrix}-1 & 2 \\ 1 & -1 \end{matrix}\right]=
\left[\begin{matrix}3 & -2 \\ 2 & -1 \end{matrix}\right].
  \end{equation*}
Because the upper right and lower left entries of this matrix are
additive inverses of each other, this matrix also represents
$\widetilde{\zv}$.

According to {\tt NETmap}, the degree of the map $X$ is 6 and the
degree of the map $Y$ is 16. Finding the algebraic curve defining the
correspondence on moduli space discussed in the introduction will be difficult.

  \begin{figure}
\centerline{\includegraphics{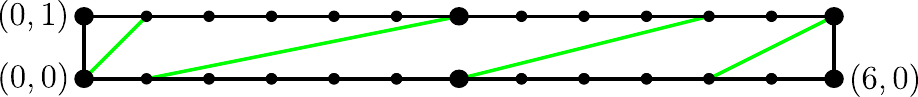}}
\caption{A virtual presentation diagram for $f$}
\label{fig:PrenDgm}
  \end{figure}

  \begin{figure}
\centerline{\includegraphics{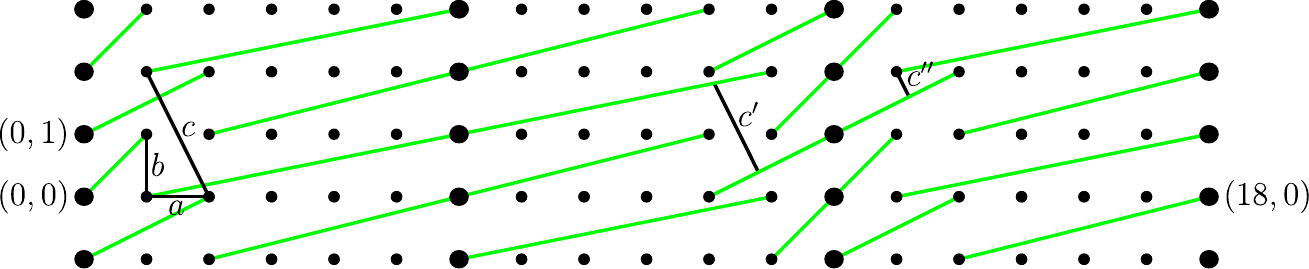}}
\caption{Computing $\zm_f$}
\label{fig:CmpuMu}
  \end{figure}
  
  \newpage

\begin{figure}
  \begin{equation*}
\xymatrix{\frac{1}{0},\frac{0}{1}\ar[d]_{\zm_{\zv}}
\ar[r]^{\zm_f} & \frac{1}{1},\frac{2}{1}\ar[d]_{\zm_{\widetilde{\zv}}}\\
\frac{1}{0},\frac{-2}{1}\ar[r]^{\zm_f} & \frac{1}{1},\frac{4}{3}}
  \end{equation*}
\caption{Computing $\zm_{\widetilde{\zv}}$.}
\label{fig:MuDgm}
\end{figure}

\section{Branch data}\label{sec:bd}\nosubsections

Suppose $X, Y$ are closed oriented surfaces and $f: X \to Y$ is a
finite orientation-preserving branched covering of degree $d \geq 2$.
Let $C_f$ be the set of critical points of $f$, and let
$V_f=f(C_f)$, the set of critical values of $f$.  Generalizing the
definition used in the introduction and \S 6, we say another such map
$f': X' \to Y'$ is \emph{topologically equivalent} to $f: X \to Y$
if there exist orientation-preserving homeomorphisms $\phi: X \to X',
\psi: Y \to Y'$ with $\psi \circ f = f' \circ \phi$.  Given $y \in Y$,
the collection $\{ \deg(f,x) : f(x)=y\}$ of local degrees defines a
partition of $d$. The \emph{branch data} associated to $f$ is the
set of such partitions for $y\in V_f$.  Branch data is an invariant
of topological equivalence.
  
Taking $X=Y=S^2$ and $f$ to be a NET map, the branch data of $f$
yields an invariant of the topological equivalence class of $f$.  In
this section, we study branch data associated to NET maps. Theorem
\ref{thm:eleydivrs} implies that the topological equivalence class of
a NET map is determined by its elementary divisors.  So the branch
data of a NET map is computable from its elementary divisors.  This
will be made explicit after the proof of Lemma~\ref{lemma:ed_and_bd}.

Since each critical point of a NET map has local degree $2$, there is
much redundancy in its branch data.  It is therefore advantageous to
focus on branch value preimages which are not critical points.  From
\cite[Lemma 1.3]{cfpp}, we know that the set $P_1\subseteq S^2-C_f$ of
all noncritical points which $f$ maps to its postcritical set has
exactly four points.  Let $P_1^*=P_1\cap f^{-1}(V_f)$.  The
\emph{kernel} of the branch data of $f$ is the set-theoretic data of
the restriction $f|P_1^*: P_1^* \to f(P_1^*)$, regarded as a function
from one finite set to another.

\begin{lemma} \label{lemma:ed_and_bd} Suppose $f$ is a NET map. Let
$(m,n)$ with $n|m$ be the elementary divisors of $f$. Exactly one of
the following three cases occurs.
\begin{enumerate}
\item $(m,n) \equiv (1,1)$ modulo 2 $\iff |V_f-f(P_1^*)|=0$
\item $(m,n) \equiv (0,1)$ modulo 2 $\iff |V_f-f(P_1^*)|=2$
\item $(m,n) \equiv (0,0)$ modulo 2 $\iff |V_f-f(P_1^*)|=3$
\end{enumerate}
\end{lemma}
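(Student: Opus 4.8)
The plan is to work with a Euclidean model. By Theorem~\ref{thm:eleydivrs}, $f$ is topologically equivalent to a Euclidean NET map induced by the diagonal matrix $D=\mathrm{diag}(m,n)$ with $n\mid m$, and both $|V_f-f(P_1^*)|$ and the congruence class of $(m,n)$ mod $2$ are invariants of topological equivalence, so it suffices to compute everything for this linear model $g\colon \mathbb{R}^2/\Gamma_2 \to \mathbb{R}^2/\Gamma_2$ with $\Lambda_2=\mathbb{Z}^2$ and $\Lambda_1=D\mathbb{Z}^2$, $\pi_i$ the usual projections, and $g\circ\pi_1=\pi_2$. Here $P_2=\pi_2(\Lambda_2)$ has four points, the critical values $V_g$ are the images under $\pi_2$ of the order-$2$ points of $\mathbb{R}^2/\Gamma_2$, i.e. of $\tfrac12\mathbb{Z}^2$, and $P_1=\pi_1(\Lambda_1)$; the four points of $P_1$ are $\pi_1$ of the four cosets of $\Lambda_1$ in $\tfrac12\Lambda_1 \cdot$(something) — more precisely $P_1$ consists of the images of $\Lambda_1$ together with the three nontrivial half-lattice translates needed so that $\pi_1^{-1}(P_2)\setminus C_g$ has four points. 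The cleanest bookkeeping: identify $\mathbb{R}^2/\Gamma_2$ with the "pillowcase" $\mathbb{R}^2/(2\mathbb{Z}^2\rtimes\{\pm1\})$, whose four corners are $\pi_2(\tfrac12\mathbb{Z}^2)=V_g$, and $P_2$ is the single corner $\pi_2(0)=\pi_2(\mathbb{Z}^2)$ together with... no: $P_2=\pi_2(\mathbb{Z}^2)$ is one point of the pillowcase. Let me instead just track $\tfrac12\Lambda_2$ versus $\Lambda_2$ modulo $\Gamma_1$ and push forward.

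First I would describe $P_1^*=P_1\cap g^{-1}(V_g)$. A point $x$ of $S^2$ with $g(x)\in V_g$ and $x$ noncritical corresponds to a point of $\mathbb{R}^2$ (mod $\Gamma_1$) whose image in $\mathbb{R}^2/\Gamma_2$ lies in $\tfrac12\mathbb{Z}^2$ mod $\Gamma_2$; since $g$ is induced by $D^{-1}$ going down, $g(x)\in V_g$ means $x\in \pi_1(D^{-1}\cdot\tfrac12\mathbb{Z}^2)$ mod $\Gamma_1$, i.e. the relevant lattice upstairs is $\tfrac12 D^{-1}\mathbb{Z}^2$ compared against $\Lambda_1=\mathbb{Z}^2$ (in $\Lambda_1$-coordinates). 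The four points of $P_1$ correspond to the four elements of $\tfrac12\mathbb{Z}^2/\mathbb{Z}^2 = (\tfrac12\mathbb{Z}/\mathbb{Z})^2$ in $\Lambda_1$-coordinates, and $P_1^*$ is the subset of these four whose image downstairs in $(\tfrac12\Lambda_2/\Lambda_2)/\{\pm1\}$ is a corner of the pillowcase, equivalently lands in $V_g$. The upshot is that I need to compute the image of the group homomorphism $(\tfrac12\Lambda_1/\Lambda_1) \to (\tfrac12\Lambda_2/\Lambda_2)$, or rather the composite with reduction by $D$, and see which of the three cases — the image of $\tfrac12\Lambda_1$ in $\tfrac12\Lambda_2/\Lambda_2\cong(\mathbb{Z}/2)^2$ — occurs. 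Concretely: the map $\Lambda_1/2\Lambda_2 \hookrightarrow$ stuff; the parity of the entries of $D=\mathrm{diag}(m,n)$ controls whether $D(\tfrac12\mathbb{Z}^2) + \mathbb{Z}^2 = \tfrac12\mathbb{Z}^2$ contributes $0$, $1$, or $2$ independent $\mathbb{Z}/2$-directions. The three cases are exactly: $m,n$ both odd $\Rightarrow$ $D$ is invertible mod $2$ $\Rightarrow$ the half-lattice points of $\Lambda_1$ already surject onto those of $\Lambda_2$, so every point of $P_1$ maps to a point of $P_2$ (which is not a critical value, as $\pi_2(0)$ is the non-corner marked point) — wait, I need $P_2$ and $V_g$ to be disjoint or not. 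Let me reconcile: in the NET/pillowcase picture $P_2$ is the four points $\pi_2(\mathbb{Z}^2)$... but that's one point. The resolution is that in fact $P_2$ has four points because we do NOT quotient by all of $\Gamma_2$ symmetrically; $P_2=\pi_2(\Lambda_2)$ and $\pi_2$ has degree... Here $\Lambda_2/\Gamma_2$ has exactly the right count because $\Gamma_2$ acts on $\Lambda_2=\mathbb{Z}^2$ with $\mathbb{Z}^2/2\mathbb{Z}^2=(\mathbb{Z}/2)^2$ orbits (the $\pm$ fixes them), giving $4$ points — good, and $V_g=\pi_2(\tfrac12\mathbb{Z}^2\setminus\mathbb{Z}^2)$ also has $4$ points and is disjoint from $P_2$... but a NET map with $(m,n)$ both odd has $4$ critical values by the introduction, and case (1) says $|V_f-f(P_1^*)|=0$, meaning all four critical values are hit by $P_1^*$. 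So the statement to prove in case (1) is that all four of the $P_1$ points map into $V_g$ and cover it. That happens iff the half-lattice of $\Lambda_1$ maps onto the half-lattice of $\Lambda_2$ mod $\Lambda_2$, iff $\bar D\colon(\mathbb{Z}/2)^2\to(\mathbb{Z}/2)^2$ is an isomorphism, iff $m\equiv n\equiv 1$. In case (3), $m\equiv n\equiv 0$, $\bar D=0$, the $P_1$ points all map to $\pi_2(\mathbb{Z}^2)$-type points, i.e. to $P_2$, so $P_1^*=\varnothing$ and $|V_g-\varnothing|=|V_g|$ — but that's $4$, not $3$. So the count is subtler: when $D\equiv0$ mod $2$ the map is $(2,2)$ up to factors and has only $3$ critical values, per the introduction's "atypical" remark; that's where the $3$ comes from. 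Similarly $(m,n)\equiv(0,1)$: there are $4$ critical values, $\bar D$ has rank $1$, image of $P_1$-half-lattice is a $2$-element subgroup, $|f(P_1^*)|=2$ of the four critical values, giving $|V_f-f(P_1^*)|=2$.

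So the real structure of the proof is: (a) pass to the Euclidean diagonal model via Theorem~\ref{thm:eleydivrs}; (b) identify $P_1^*$, $V_g$, $f(P_1^*)$ explicitly in terms of the induced map $\bar D\colon \tfrac12\Lambda_1/\Lambda_1 \to \tfrac12\Lambda_2/\Lambda_2$, each $\cong(\mathbb{Z}/2\mathbb{Z})^2$, whose matrix is $D$ mod $2$; (c) compute $V_g$ itself (cardinality $4$ in the typical cases, $3$ in the $(2,2)$ case, matching the introduction's branch-value counts / Riemann–Hurwitz); (d) in each of the three parity classes, read off $|\operatorname{image}(\bar D)|$ and hence $|f(P_1^*)|$, then subtract. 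The equivalences $\iff$ are then immediate because the three parity classes of $(m,n)$ (recall $n\mid m$ forces out $(m,n)\equiv(1,0)$) are mutually exclusive and exhaustive, and produce the three distinct values $0,2,3$. I expect the main obstacle to be (c): correctly counting $V_g$ and checking that $f(P_1^*)\subseteq V_g$ with the asserted cardinality — in particular handling the $(2,2)$ case where two of the "would-be" four corner critical values coincide under the pillowcase symmetry and the exceptional divisibility-by-$4$ phenomenon cited from \cite[Thm.~3.8]{pp} is lurking — and being careful that in the odd case $P_2$ and $V_g$ are genuinely disjoint so "landing in $P_2$" versus "landing in $V_g$" is a real dichotomy. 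Everything else is a finite linear-algebra check over $\mathbb{Z}/2\mathbb{Z}$.
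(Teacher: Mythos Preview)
Your overall strategy---reduce to the diagonal Euclidean model via Theorem~\ref{thm:eleydivrs} and then analyze the induced $\bbF_2$-linear map between $2$-dimensional spaces---is exactly the paper's approach. The paper phrases this as the folding map inducing a linear map $\zL_1/2\zL_1\to\zL_2/2\zL_2$, whose rank is $2$, $1$, or $0$ according to the parity of $(m,n)$.

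However, there is a genuine confusion in your bookkeeping that would derail the argument as written. For the Euclidean model $g$, the critical values $V_g$ are \emph{not} $\zp_2(\tfrac12\bbZ^2\setminus\bbZ^2)$, and $P_2$ and $V_g$ are \emph{not} disjoint. In fact $V_g\subseteq P_2$ always: the critical points of $g$ are $\zp_1(\zL_2\setminus\zL_1)$, and their images lie in $\zp_2(\zL_2)=P_2$. Your half-lattice picture conflates the branch points of the $\wp$-like quotient with the critical values of $g$, which are different things.

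The clean way to avoid all of this---and what the paper does in one line---is to first observe that $V_f-f(P_1^*)=P_2-f(P_1)$. Indeed, $f(P_1^*)=f(P_1)\cap V_f$, so $V_f-f(P_1^*)=V_f-f(P_1)$; and any $p\in P_2\setminus V_f$ must lie in $f(P_1)$ (it is postcritical but not a critical value, so it is $f$ of some postcritical noncritical point, hence of a point of $P_1$), so $P_2-f(P_1)=V_f-f(P_1)$. With that identity in hand, you only need $|P_2-f(P_1)|=4-|\text{image of }\zL_1/2\zL_1\text{ in }\zL_2/2\zL_2|=4-2^{\text{rank}}$, which gives $0,2,3$ for ranks $2,1,0$ respectively. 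There is no separate computation of $|V_g|$ and no special handling of the $(2,2)$ case required.
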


\begin{proof} We first observe that $V_f-f(P_1^*)=P_2-f(P_1)$, where
$P_2$ is the postcritical set of $f$.  Up to topological equivalence,
$f$ is the map on the sphere induced by the diagonal matrix $A$ with
diagonal entries $(m,n)$. By Theorem~\ref{thm:eleydivrs}, we may
assume that $f$ has this form.  The domain sphere is the double of an
$m$-by-$n$ rectangle over its boundary; the codomain sphere is the
double of the unit square over its boundary. The map $f$ folds in the
obvious way. The set $P_1$ consists of the corners of the domain
pillowcase.  From this the lemma follows immediately.

A bit more formally: the set of corners $P_1$ of the domain sphere is
naturally identified with the vector space $\Lambda_1/2\Lambda_1$;
that of the codomain sphere with the vector space
$\Lambda_2/2\Lambda_2$; the folding map induces a linear map between
these vector spaces. Cases 1, 2, 3 then correspond respectively to the
rank of this linear map being 2, 1, or 0.
\end{proof}

The branch data of $f$ is easily reconstructed from knowledge of its
degree $d$ and its branch data kernel: the kernel specifies precisely
which partitions have a $1$ as an additive factor, and how many times
a $1$ occurs.  Here is the precise statement. In each partition, the
number of $2$'s is uniquely determined by the condition that the sum
is the degree, $d$.
\begin{enumerate} 
  \item $|V_f-f(P_1^*)|=0$; the partition over each element of $V_f$
is $\{2, \ldots, 2, 1\}$
  \item $|V_f-f(P_1^*)|=2$; the  partitions which contain 1's have the
form $\{2, \ldots, 2, 1, 1\}$
  \item $|V_f-f(P_1^*)|=3$; the partition which contains 1's has the
form $\{2, \ldots, 2, 1,1,1,1\}$
\end{enumerate}
In the typical case, exactly two partitions contain 1's in (2)
and exactly one partition contains 1's in (3).  In the atypical case
fewer partitions may contain 1's.  Below, we refer to the three types
of branch data by the same case numbers.

The question of which branch data arise from branched coverings--that
is, are \emph{realizable}--is a classical, much-studied problem; see
\cite{pp} and the references therein. The Riemann-Hurwitz formula
gives an obvious necessary condition: here, this means that there are
$2d-2$ critical points, counted with multiplicity.  We always assume
that branch data satisfy this condition.  The Riemann-Hurwitz
conditions are not sufficient for branch data to be realizable. For
example, the degree 6 branch data $\{2, 1,1,1,1\}, 3 \times \{2,2,2\}$
is not realizable because statement 3 of Lemma~\ref{lemma:ed_and_bd}
implies that 4 divides $d$ when $|V_f-f(P_1^*)|=3$. In general,
\cite[Thm. 3.8]{pp} implies the following.

\begin{thm} \label{thm:pp3point8} Branch data of types 1 and 2 are
always realizable. Branch data of type 3 is realizable if and only if
$d \equiv 0$ modulo $4$. In each realizable case, the realizing map
can be taken to be a Euclidean NET map.
\end{thm}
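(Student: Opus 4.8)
The plan is to separate the two directions. Realizability in types $1$ and $2$, realizability in type $3$ when $4\mid d$, and the closing sentence will all follow from one explicit construction using Euclidean NET maps; the non-realizability of type-$3$ branch data when $4\nmid d$ is the ``exceptional condition'' mentioned in the introduction, which I would deduce either from \cite[Thm.~3.8]{pp} or from the short lifting argument sketched below.

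First I would record that, once the Riemann--Hurwitz relation $\sum_{x\in C_f}(\deg(f,x)-1)=2d-2$ is imposed, the branch data of each type is determined by $d$ alone: a direct count shows that type $1$ forces exactly four partitions each equal to $\{2,\dots,2,1\}$ with $(d-1)/2$ twos and forces $d$ odd; type $2$ forces exactly two partitions $\{2,\dots,2,1,1\}$ with $(d-2)/2$ twos together with exactly two all-$2$ partitions, and forces $d$ even; and type $3$ forces exactly one partition $\{2,\dots,2,1,1,1,1\}$ with $(d-4)/2$ twos together with exactly three all-$2$ partitions, and forces $d$ even with $d\ge4$. Consequently, for each admissible pair $(\mathrm{type},d)$ it suffices to produce \emph{one} branched cover with that branch data.

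For the construction, given positive integers $n\mid m$ let $g_{m,n}$ denote the Euclidean NET map induced by the linear map whose matrix is diagonal with entries $(m,n)$ --- concretely, the folding map of the pillowcase double of an $m\times n$ rectangle onto that of the unit square. By Theorem~\ref{thm:eleydivrs} the topological equivalence class of $g_{m,n}$, hence its branch data, is determined by $(m,n)$, and by Lemma~\ref{lemma:ed_and_bd} and the discussion following it that branch data is precisely the one of degree $d=mn$ and of the type fixed by the parities of $m$ and $n$. So it remains only to choose elementary divisors of the correct degree and parity: for type $1$ ($d$ odd) take $(m,n)=(d,1)$; for type $2$ ($d$ even) take $(m,n)=(d,1)$; and for type $3$ ($4\mid d$) take $(m,n)=(d/2,2)$, which is a legal choice precisely because $4\mid d$ is equivalent to $2\mid(d/2)$, so that $n\mid m$ and both entries are even. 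The atypical pairs $(2,1)$ (at $d=2$, type $2$) and $(2,2)$ (at $d=4$, type $3$) need no special treatment: Euclidean NET maps with these elementary divisors exist and carry exactly the predicted branch data, some of the $1$'s in the kernel partition simply becoming vacuous. This establishes realizability and shows the realizing map can be taken to be a Euclidean NET map; the one genuinely delicate point here is to check that $g_{m,n}$ realizes \emph{exactly} the branch data pinned down above, not merely some branch data of the right type, and to keep track of the two atypical pairs, where the collapse of $1$'s must be accounted for.

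Finally, for the non-realizability of type-$3$ branch data when $4\nmid d$, the quickest route is to cite \cite[Thm.~3.8]{pp}; a self-contained argument adapted to this setting runs as follows. Let $f$ be a branched cover realizing such data. A monodromy computation modulo $2$ shows that the double cover $\rho$ of the target sphere branched over the four critical values pulls back under $f$ to the double cover of the source sphere branched over exactly four points, namely the four non-critical preimages of the exceptional critical value $v_1$ --- every other preimage of a critical value has even local degree. Hence $f$ lifts to an \emph{unramified} degree-$d$ self-covering $\widetilde f$ of a torus $T$. The four branch points of the source double cover are the fixed points of its deck involution; since $\rho$ is branched over $v_1$, that value has a unique preimage, and $f$ carries the four non-critical preimages of $v_1$ to $v_1$, so $\widetilde f$ sends all four of those fixed points to a single point of $T$. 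Choosing that point, together with one of the four source fixed points, as origins, the fibre $\widetilde f^{-1}(0)$ is a subgroup of $T$ of order $d$ --- the deck group of the covering $\widetilde f$ --- and it contains the order-$4$ subgroup of $2$-torsion points of $T$; therefore $4\mid d$. This lifting argument is routine once the modulo-$2$ monodromy bookkeeping is in place.
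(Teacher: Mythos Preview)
The paper does not give its own proof of this statement; it simply records that the theorem follows from \cite[Thm.~3.8]{pp}. Your primary route for non-realizability is the same citation, so on that point there is nothing to compare. Your realizability construction via the Euclidean folding maps $g_{m,n}$ is correct and is precisely the content of Lemma~\ref{lemma:ed_and_bd} and the discussion following it, read in reverse; the paper uses these same maps in the same way in the proof of Theorem~\ref{thm:portrait}, so this part of your argument is fully in line with the paper even though the paper does not spell it out as a proof of Theorem~\ref{thm:pp3point8}.

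Your alternative self-contained argument for non-realizability is a genuine addition beyond the paper, and the strategy --- pull back the hyperelliptic torus cover and exploit that all four fixed points of the source involution land on a single target point --- is sound. But the final step has a gap: you assert that the fiber $\tilde f^{-1}(0)$ is a subgroup of the source torus, identifying it with the deck group. This is not justified. The map $\tilde f$ is an odd covering map sending $0$ to $0$, but it need not be a group homomorphism, and the fiber of such a map over $0$ is in general only symmetric under negation, not closed under addition (already in the one-dimensional case $\tilde f(x)=3x+\epsilon\sin 2\pi x$ on $\mathbb{R}/\mathbb{Z}$, the fiber $\{0,x_1,-x_1\}$ with $x_1\ne\tfrac13$ is not a subgroup). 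The fix is short: lift $\tilde f$ to $\tilde F\colon\mathbb{R}^2\to\mathbb{R}^2$ with $\tilde F(0)=0$; equivariance with the hyperelliptic involutions forces $\tilde F(-x)=-\tilde F(x)$, and combining this oddness with the lattice-equivariance $\tilde F(x+\lambda')=\tilde F(x)+\phi(\lambda')$ at $x=-\tfrac12\lambda'$ gives $2\,\tilde F(\tfrac12\lambda')=\phi(\lambda')$. Since each $\tilde F(\tfrac12\lambda')$ lies in $\Lambda$ by hypothesis, this yields $\phi(\Lambda')\subset 2\Lambda$, whence $4\mid[\Lambda:\phi(\Lambda')]=d$.
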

Note that the extra condition in the hypothesis in Case 3 is indeed
satisfied by branch data of such NET maps (Lemma \ref{lemma:ed_and_bd}).

\section{Dynamic portraits}
\label{sec:main}\nosubsections

This section deals with dynamic portraits of NET maps.
Corollary~\ref{cor:portrait} completely characterizes dynamic
portraits of NET maps.  Theorem~\ref{thm:portrait} gives, in addition,
information about their relationship with elementary divisors.
Algorithms for computing the correspondence between dynamic portraits
of NET maps and NET map presentations are presented in the next two
sections.  Table~\ref{tab:portraitnum} gives the number of NET map
dynamic portraits in every degree.

We begin by discussing dynamic portraits of general Thurston
maps from here to Lemma~\ref{lemma:closing}.  By a \emph{dynamic
portrait} of a Thurston map $f$ we mean a weighted directed graph
$\zG$ whose vertices are in bijective correspondence with the points
of $S^2$ which are either critical or postcritical points of $f$.
There is a directed edge from a vertex of $\zG$ corresponding to $x\in
S^2$ to a vertex of $\zG$ corresponding to $y\in S^2$ if and only if
$y=f(x)$.  This edge is assigned a weight, the positive integer
$\deg(f,x)$.

Let $\zG$ be a finite directed graph whose edges are weighted with
positive integers for which exactly one edge emanates from every
vertex.  We define critical points (vertices), critical values
(vertices) and postcritical vertices of $\zG$ in the straightforward
way.  We define the incoming degree of a vertex $v$ of $\zG$ to be the
sum of the weights of the edges with terminal vertex $v$.  The
Riemann-Hurwitz formula imposes conditions on the dynamic portrait of
a Thurston map $f$:
  \begin{equation*}
\sum_{x\in C_f}^{}(\deg(f,x)-1)=2d-2,
  \end{equation*}
where $C_f$ is the set of critical points of $f$ and $d=\deg(f)$.  In
the same way, we say that $\zG$ satisfies the Riemann-Hurwitz formula
if there exists an integer $d\ge 2$ for which the analogous equation
holds and the incoming degree of every vertex of $\zG$ is at most $d$.
We say that a finite weighted directed graph $\zG$ satisfies the
\emph{obvious conditions} for the dynamic portrait of a Thurston map if
exactly one edge emanates from every vertex, each of its vertices is
either critical or postcritical and it satisfies the Riemann-Hurwitz
condition for some integer $d\ge 2$, the degree of $\zG$.

Let $\zG$ be a finite weighted directed graph which satisfies the
obvious conditions for the dynamic portrait of a Thurston map.  We can
assign branch data to $\zG$ just as we assign branch data to a
Thurston map.  We are interested in whether or not $\zG$ is
realizable, that is, isomorphic to the dynamic portrait of a Thurston
map.  It is clear that if $\zG$ is realizable, then its branch data is
realizable.  The next lemma reduces the realizability of $\zG$ to the
realizability of its branch data.

\begin{lemma} \label{lemma:closing} Let $\zG$ be a finite weighted
directed graph which satisfies the obvious conditions for the dynamic
portrait of a Thurston map.  Suppose that the branch data of $\zG$ is
realizable by a Thurston map $f$.  Then $\zG$ is realizable by a
Thurston maps topologically equivalent to $f$.
\end{lemma}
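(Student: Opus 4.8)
The plan is to start from the Thurston map $f$ realizing the branch data of $\zG$ and to modify it by postcomposition with homeomorphisms so as to prescribe the dynamics on the critical and postcritical points without disturbing the branch data. Write $V_f$ for the set of critical values of $f$ and recall that $\zG$ records, abstractly, a set $S$ of ``marked points'' (the would-be critical and postcritical points), a weighting, and a self-map $S \to S$ with exactly one edge out of each vertex. Because the branch data of $\zG$ equals that of $f$, we may choose a bijection between the vertices of $\zG$ labelled as critical points and $C_f$ that is compatible with the local-degree weights, and this extends to a partial identification of the critical-value vertices of $\zG$ with $V_f$.

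First I would observe that the ``shape'' of $f$ near its critical points is already correct: a vertex $v$ of $\zG$ and the corresponding point $x\in C_f$ carry the same weight $\deg(f,x)$, and the multiset of weights over each critical value matches by the branch-data hypothesis. What is not yet correct is the global combinatorics of the map $S\to S$ encoded by $\zG$: the edges of $\zG$ tell us which marked point should map to which, and $f$ need not respect this. The key step is therefore: choose an orientation-preserving homeomorphism $h\co S^2\to S^2$ that carries $f(C_f)$ to the set of points prescribed by $\zG$ as images, in the pattern dictated by $\zG$'s edges, and set $g = h\circ f$. Postcomposition by a homeomorphism does not change the set of critical points, the local degrees there, or the branch data; it only relabels the critical values and hence the forward orbits. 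Since $g$ and $f$ differ by postcomposition with a homeomorphism, they are topologically equivalent in the sense of \S\ref{sec:bd} (take $\phi=\mathrm{id}$, $\psi=h$, so $\psi\circ f = g = g\circ\phi$). The map $g$ is again a Thurston map, its set of critical and postcritical points is finite, and by construction its dynamic portrait is isomorphic to $\zG$.

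The main obstacle is the bookkeeping needed to produce the homeomorphism $h$ realizing exactly the abstract dynamics of $\zG$, and in particular the ``closing up'' of orbits: $\zG$ may prescribe that some postcritical point is periodic, or that two distinct critical values are eventually identified, and one must check that the Riemann--Hurwitz constraint and the ``incoming degree at most $d$'' condition built into the obvious conditions are precisely what is needed for such an $h$ to exist while keeping $g$'s critical/postcritical set equal to the intended vertex set of $\zG$ (no spurious extra postcritical points appear, and no prescribed postcritical point accidentally fails to be hit). I would argue this by induction on the edges of $\zG$, or equivalently by treating the finitely many forward orbits of the vertex set one at a time, at each stage adjusting $h$ on a disjoint collection of small disks around the relevant points; the compatibility of weights guarantees that at each stage the local picture of $g$ near a critical point is a standard power map, so nothing obstructs the construction. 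Once $h$ is built, the verification that $g=h\circ f$ has dynamic portrait $\zG$ and is topologically equivalent to $f$ is immediate from the definitions.
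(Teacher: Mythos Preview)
Your strategy---realize the branch data with $f$ and then postcompose with a homeomorphism $h$ to impose the prescribed dynamics---is exactly the paper's approach. The paper's execution is tidier: it regards $f$ as a map between two \emph{distinct} copies $S^2_A \to S^2_B$ of the sphere, first embeds \emph{all} portrait vertices $A$ (not just the critical ones) into $S^2_A$ so that $f|_A$ already agrees with the abstract portrait map $\phi\co A\to B$---this is where the incoming-degree-at-most-$d$ condition enters, as a fiber-cardinality count showing each $f$-fiber has room for the required non-critical preimages---and only then chooses a single homeomorphism $h\co S^2_B\to S^2_A$ realizing the inclusion $\iota\co B\hookrightarrow A$. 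This decoupling sidesteps your iterative bookkeeping, in which the placement of non-critical marked points and the construction of $h$ are entangled; in particular it makes transparent the constraint you left implicit, namely that vertices must be placed so that $f(v_1)=f(v_2)$ if and only if $\phi(v_1)=\phi(v_2)$, without which no homeomorphism $h$ can do the job.
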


\begin{proof} Let $A$ be the set of vertices of $\zG$ and let $B$ be
the set of vertices of $\zG$ with at least one incoming edge.  We
regard $A, B$ as disjoint sets.  The edges of $\zG$ determine a
surjective map $\phi: A \to B$.  The identity map defines an injection
$\iota: B \hookrightarrow A$.  Denote by $C \subset A$ the set of
critical points and $V:=\phi(C)\subset B$ the set of critical values.
Let $S^2_A, S^2_B$ be distinct copies of the sphere. Suppose the
branch data extracted from $\zG$ is realizable.  By hypothesis there
exist a Thurston map $f\co S_A^2\to S_B^2$ and embeddings $C
\hookrightarrow S^2_A$ and $V \hookrightarrow S^2_B$ such that, upon
identifying elements of $C$ and of $V$ with their images in the
respective spheres, we have $\phi|_C=f|_C$, and the local degrees at
points in $C$ coming from the portrait and from $f$ coincide.  We
extend the embedding $V \hookrightarrow S^2_B$ arbitrarily to an
embedding $B \hookrightarrow S^2_B$.

In this paragraph, we show that there is an extension of the embedding
$C \hookrightarrow S^2_A$ to an embedding $A \hookrightarrow S^2_A$
such that $\phi|_A=f|_A$. Pick $b \in B$. If $\phi^{-1}(b) \cap (A-C)
= \emptyset$, then there is nothing to do. Otherwise, let $\{a_1, \ldots,
a_k\}=\phi^{-1}(b) \cap (A-C)$.  The local degree at each $a_i$ is
equal to $1$. By the local degree constraint on $\zG$ 
and the compatibility of both local and global degrees, $f^{-1}(b)
\cap (S^2_A-C)$ contains at least $k$ elements. We choose an embedding
$\{a_1, \ldots, a_k\} \hookrightarrow f^{-1}(b) \cap (S^2_A-C)$ (it is
not necessarily surjective: the sum of the local degrees in every
fiber of $\zf$ is at most but not necessarily equal to $\deg(f)$).
Repeating this construction for each $b$ establishes the result.

Finally, we ``close'' the map $f: A \to B$ to obtain a realization of
the dynamic portrait. Recall that we have an injection $\iota: B \to
A$ induced by the identity map on the vertices in the portrait. We
arbitrarily choose an orientation-preserving homeomorphism $h: S^2_B
\to S^2_A$ which agrees with $\iota$. The composition $h \circ f$ is a
Thurston map topologically equivalent to $f$ which realizes $\zG$.
\end{proof}

Now we specialize to NET maps.  A finite weighted directed graph $\zG$
satisfies the obvious conditions for the dynamic portrait of a NET map
if it satisfies the obvious conditions with some additional restrictions: every edge weight is either 1 or 2, and the postcritical set $P$ of $\zG$ has exactly four points.  

Let $\zG$ be a finite weighted directed graph which satisfies the
obvious conditions for the dynamic portrait of a NET map.  In this
paragraph we define the \emph{mod 2 elementary divisors} $m,n\in
\{0,1\}$ of $\zG$.  Motivated by Lemma~\ref{lemma:ed_and_bd}, let $V$
be the set of critical values of $\zG$.  Let $k$ be the number of
elements $x$ of $V$ whose incoming degree is the degree of $\zG$ and
no edge of $\zG$ ending at $x$ has weight 1.  As in
Lemma~\ref{lemma:ed_and_bd}, we have that $k$ is either 0, 2 or 3.
The mod 2 elementary divisors $(m,n)$ of $\zG$ are defined to be
either $(1,1)$, $(0,1)$ or $(0,0)$ accordingly.

We continue to let $\zG$ be a finite weighted directed graph which
satisfies the obvious conditions for the dynamic portrait of a NET map
with degree d.  We say that $\zG$ satisfies the \emph{exceptional
condition} in the following situation.  If the mod 2 elementary
divisors of $\zG$ are both 0, then 4 divides $d$.

We are finally ready for the main theorem on dynamic portraits of NET
maps.

\begin{thm}\label{thm:portrait} Let $\zG$ be a finite weighted
directed graph which satisfies the obvious conditions for the dynamic
portrait of a NET map with degree d.  Let $m$ and $n$ be positive
integers such that $n|m$ and $mn=d$.  Then there exists a NET map with
a dynamic portrait isomorphic to $\zG$ and elementary divisors $m$ and
$n$ if and only if $m$ and $n$ are congruent modulo 2 to the mod 2
elementary divisors of $\zG$.
\end{thm}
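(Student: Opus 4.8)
The plan is to route both directions through the machinery already assembled: Lemma~\ref{lemma:ed_and_bd} (which ties the parity class of $(m,n)$ to the branch-data type), Theorem~\ref{thm:pp3point8} and Lemma~\ref{lemma:closing} (which upgrade realizability of branch data to realizability of the whole portrait), and Theorem~\ref{thm:eleydivrs} (which says the topological type, hence the elementary divisors, of a NET map is rigid). The organizing observation is that the branch data of a NET map, and also the branch data assigned to the graph $\zG$, is completely determined by the degree together with the ``type'' $k\in\{0,2,3\}$.

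For the forward direction, suppose $g$ is a NET map whose dynamic portrait is isomorphic to $\zG$ and whose elementary divisors are $(m,n)$. Under the portrait isomorphism, a vertex $y$ that is a critical value has incoming degree $d$ and no incident weight-$1$ edge precisely when every preimage of $y$, counted with multiplicity, is a critical point of $g$, i.e. $g^{-1}(y)\subseteq C_g$; since $P_1=g^{-1}(P_2)\smallsetminus C_g$ and critical values lie in $P_2$, this is equivalent to $y\in P_2-g(P_1)$. Hence the integer $k$ defining the mod~$2$ elementary divisors of $\zG$ equals $|P_2-g(P_1)|=|V_g-g(P_1^*)|$ (the last equality is recorded in the proof of Lemma~\ref{lemma:ed_and_bd}). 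Now Lemma~\ref{lemma:ed_and_bd} says this number is $0,2,3$ according as $(m,n)\equiv(1,1),(0,1),(0,0)\bmod 2$, and by definition the mod~$2$ elementary divisors of $\zG$ are $(1,1),(0,1),(0,0)$ according as $k=0,2,3$; so they agree.

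For the backward direction, suppose $n\mid m$, $mn=d$, and $(m,n)$ reduces mod~$2$ to the mod~$2$ elementary divisors of $\zG$. Let $g$ be the Euclidean NET map induced by the diagonal matrix with entries $(m,n)$; it has elementary divisors $(m,n)$ and, by Lemma~\ref{lemma:ed_and_bd}, branch data of the standard type-$k$ shape in degree $d$ described after that lemma. The crux is to show $\zG$ has this same branch data. By Riemann--Hurwitz $\sum_{v\in V}c_v=2d-2$, where $V$ is the set of critical-value vertices and $c_v$ the number of critical preimages of $v$; since $|V|\le 4$ (there are four postcritical vertices), $c_v\ge 1$ for $v\in V$, and $c_v\le d/2$ with equality exactly at the $k$ special vertices, the count of $1$'s in the branch data of $\zG$ is $\sum_{v\in V}(d-2c_v)=d(|V|-4)+4$, which forces $|V|=4$ for $d\ge 5$ (and a short separate check settles the atypical degrees $d\in\{2,4\}$, where $|V|<4$). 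From here the multiset of partitions $\{2^{c_v},1^{\,d-2c_v}\}$ of $\zG$ is pinned down to be exactly the standard type-$k$ branch data in degree $d$, matching that of $g$. Then Lemma~\ref{lemma:closing}, applied to the realizing map $g$, yields a Thurston map $f$ topologically equivalent to $g$ and realizing $\zG$; since $\zG$ has exactly four postcritical vertices and all edge weights in $\{1,2\}$, $f$ is a NET map, and Theorem~\ref{thm:eleydivrs} gives that its elementary divisors equal those of $g$, namely $(m,n)$. (The exceptional hypothesis $4\mid d$ needed to invoke Theorem~\ref{thm:pp3point8} in the type-$3$ case is automatic here, since $(m,n)\equiv(0,0)$ forces $4\mid mn=d$.)

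The main obstacle is exactly the branch-data matching step in the backward direction: one must verify that the constraints on $\zG$ (Riemann--Hurwitz, four postcritical vertices, weights in $\{1,2\}$, incoming degrees $\le d$) determine its branch data purely in terms of $d$ and $k$, and that this agrees with the branch data of the pillowcase map of type $(m,n)$; the bookkeeping separates the generic situation $|V|=4$ from a handful of atypical low-degree configurations. Everything else in the argument is formal.
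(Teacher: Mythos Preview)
Your proof is correct and follows the same route as the paper: Lemma~\ref{lemma:ed_and_bd} for the forward direction, and for the backward direction the Euclidean model with diagonal entries $(m,n)$, branch-data matching, Lemma~\ref{lemma:closing}, and Theorem~\ref{thm:eleydivrs}. The only difference is cosmetic: where the paper simply asserts that the branch data of $\zG$ is determined by its mod~$2$ elementary divisors (citing the discussion after Lemma~\ref{lemma:ed_and_bd}), you spell out the Riemann--Hurwitz bookkeeping explicitly.
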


\begin{cor}\label{cor:portrait} Let $\zG$ be a finite weighted
directed graph which satisfies the obvious conditions for the dynamic
portrait of a NET map.  Then $\zG$ is isomorphic to the dynamic
portrait of a NET map if and only if $\zG$ satisfies the exceptional
condition.
\end{cor}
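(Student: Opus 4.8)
The plan is to derive Corollary~\ref{cor:portrait} directly from Theorem~\ref{thm:portrait}, essentially by forgetting the elementary–divisor data and doing a small amount of parity bookkeeping. Throughout, let $d\ge 2$ be the degree of $\zG$ and let $(m_0,n_0)$ denote the mod~2 elementary divisors of $\zG$, so that $(m_0,n_0)$ is $(1,1)$, $(0,1)$, or $(0,0)$ according as the integer $k$ from the definition is $0$, $2$, or $3$ (recall $k\in\{0,2,3\}$). The exceptional condition is the single requirement that $4\mid d$ when $(m_0,n_0)=(0,0)$, and is vacuous otherwise.

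For the forward implication, suppose $\zG$ is isomorphic to the dynamic portrait of a NET map $f$. Then $f$ has elementary divisors $(m,n)$ with $n\mid m$ and $mn=\deg(f)=d$, and applying Theorem~\ref{thm:portrait} to this very pair $(m,n)$ shows $(m,n)\equiv(m_0,n_0)\pmod 2$. If $(m_0,n_0)=(0,0)$ then $m$ and $n$ are both even, so $4\mid mn=d$ and the exceptional condition holds; if $(m_0,n_0)\ne(0,0)$ the exceptional condition is vacuous. Either way, $\zG$ satisfies the exceptional condition.

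For the converse, assume $\zG$ satisfies the exceptional condition. By Theorem~\ref{thm:portrait} it suffices to exhibit positive integers $m,n$ with $n\mid m$, $mn=d$, and $(m,n)\equiv(m_0,n_0)\pmod 2$; any such pair yields a NET map with dynamic portrait isomorphic to $\zG$. If $(m_0,n_0)=(0,0)$, the exceptional condition gives $4\mid d$, and $(m,n)=(d/2,2)$ works (both entries even, $mn=d$, and $2\mid d/2$). If $(m_0,n_0)=(0,1)$, then $k=2$, so $\zG$ has a critical value whose incoming degree is $d$ and all of whose incoming edges have weight $2$; hence $d$ is even and $(m,n)=(d,1)$ works. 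If $(m_0,n_0)=(1,1)$, I claim $d$ is odd, and then $(m,n)=(d,1)$ works.

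The only step that is not purely formal is this last claim — equivalently, that $k=0$ forces $d$ odd — and I would prove it by a counting argument. By the Riemann--Hurwitz equation $\zG$ has exactly $2d-2$ critical vertices, hence exactly $2d-2$ edges of weight $2$, each terminating at one of the (at most four) postcritical vertices of $\zG$. If $d$ were even, the pigeonhole principle would produce a postcritical vertex $v$ receiving at least $\lceil(2d-2)/4\rceil=d/2$ edges of weight $2$, so the incoming degree of $v$ is at least $d$; since that incoming degree is at most $d$, the vertex $v$ receives exactly $d/2$ edges, all of weight $2$, and so contributes to $k$. Thus $d$ even forces $k\ge 1$, and since $k\in\{0,2,3\}$ this gives $k\ne 0$; equivalently $k=0$ implies $d$ odd, completing the argument. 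I expect no genuine obstacle here: the substance of the corollary resides entirely in Theorem~\ref{thm:portrait}. (Alternatively, one can avoid Theorem~\ref{thm:portrait} altogether: since a Thurston map realizing a graph of NET type is automatically a NET map, Lemma~\ref{lemma:closing} reduces realizability of $\zG$ to realizability of its branch data, and Theorem~\ref{thm:pp3point8} characterizes the latter as ``type $1$ or $2$, or type $3$ with $4\mid d$'' — which, under the identification of $k\in\{0,2,3\}$ with branch data types $1,2,3$, is precisely the exceptional condition.)
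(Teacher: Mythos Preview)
Your proof is correct and follows the same approach as the paper: both derive the corollary from Theorem~\ref{thm:portrait} by producing, under the exceptional condition, positive integers $m,n$ with $n\mid m$, $mn=d$, and $(m,n)\equiv(m_0,n_0)\pmod 2$. The paper's proof is a one-line sketch asserting such $m,n$ exist; you have supplied the parity bookkeeping (including the pigeonhole argument that $k=0$ forces $d$ odd) that the paper leaves implicit.
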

\begin{proof} The point here is that if $\zG$ satisfies the
exceptional condition, then there exist positive integers $m$ and $n$
congruent modulo 2 to the mod 2 elementary divisors of $\zG$ such that
$n|m$ and $mn=d$.
\end{proof}

\begin{remark}\label{remark:portrait} Pascali and Petronio prove
Corollary~\ref{cor:portrait} for Euclidean NET maps in Theorem 3.8 of
\cite{pp}.  Our Theorem~\ref{thm:portrait} gives additional
information about elementary divisors.  It implies that in general
such a graph is isomorphic to the dynamic portrait of many NET maps
which are not only mutually Thurston inequivalent but even
topologically inequivalent.  Using the algorithm in
Section~\ref{sec:maptopic}, it is possible to find a NET map with a
given dynamic portrait.  This is done in \cite{NET} for every dynamic
portrait through degree 40.
\end{remark}

\begin{remark}\label{remark:portrait} It is not difficult to see that
there is a global bound on the number of NET map dynamic portraits for
a given degree.  Table~\ref{tab:portraitnum} gives the number $n$ of
NET map dynamic portraits in each degree $d$.
\end{remark}

\begin{table}
\begin{center}
\begin{tabular}{r|cccccccccc}
$d$  &  2  &  3  &  4  &  5  &  6  &  7  &  8  \\ \hline
$n$  & 16 & 94 & 272 & 144 & 338 & 152 & 476  \\  \hline\hline
$d \mod 4, d \geq 9$      &   0   &   1  &   2     &    3   \\ \hline
$n$  & 483 & 153 & 353 & 153 \\ 
\end{tabular}
\end{center}
\caption{The number $n$ of dynamic portraits for NET maps of degree $d$}
\label{tab:portraitnum} \end{table}

\begin{proof}[Proof of Theorem~\ref{thm:portrait}] To prove the
forward implication, suppose that $f$ is a NET map that realizes
$\Gamma$ and has elementary divisors $(m,n)$.  Then $f$ realizes the
branch data of $\Gamma$.  By Lemma \ref{lemma:ed_and_bd}, the mod 2
elementary divisors of $\Gamma$ are congruent to $(m,n)$ modulo $2$.

To prove the backward implication, suppose that the mod 2 elementary
divisors of $\Gamma$ are congruent modulo 2 to $(m,n)$.  Let $g$ be a
Euclidean NET map induced by a diagonal matrix with diagonal entries
$m$ and $n$.  The elementary divisors of $g$ are $m$ and $n$.
Lemma~\ref{lemma:ed_and_bd} and the discussion following it show that
the branch data of $g$ is determined by the modulo 2 congruence
classes of $m$ and $n$.  The branch data of $\zG$ is determined by its
mod 2 elementary divisors in the same way.  Hence the branch data of
$\zG$ equals the branch data of $g$.  Now Lemma~\ref{lemma:closing}
and Theorem~\ref{thm:eleydivrs} complete the proof of
Theorem~\ref{thm:portrait}.
\end{proof}

\begin{remark}\label{remark:hyperbolictype} A dynamic portrait has
\emph{hyperbolic type} if each cycle contains a critical point. For
topological polynomials, any Thurston map whose dynamic portrait has
hyperbolic type is unobstructed, i.e. equivalent to a complex
polynomial \cite[Theorem 10.3.9]{h2}.  Kelsey \cite{K} establishes a
partial converse: for almost all polynomial dynamic portraits of
non-hyperbolic type, there is an obstructed topological polynomial
with this portrait.  Our data show this association between portraits
and obstructedness breaks down for NET maps: the modular Hurwitz class
31HClass7 (and many others) consists entirely of unobstructed maps,
and there are non-critical postcritical points. Composing with
translations, we may find within this class examples of unobstructed
NET maps with a non-critical fixed postcritical point.
\end{remark}

\section{Explicit construction of the dynamic portrait of a NET
map}\label{sec:maptopic}\nosubsections

In this section we present an algorithm which constructs the dynamic
portrait of a NET map from a presentation for it.  Starting with a
presentation, we determine what to do, and then we give a step-by-step
algorithm.  We conclude with an example.

Let $(A,b,\za_1,\za_2,\za_3,\za_4)$ be presentation data for a NET map
$f$.  As usual, this allows $f$ to be expressed as a composition
$f=h\circ g$.  We seek combinatorial analogs $\zf$, $\zg$ and $\zh$
for $f$, $g$ and $h$.  We will use these combinatorial analogs to
construct a dynamic portrait for $f$.

To begin, let $\zl_1$ and $\zl_2$ be the columns of $A$.  They
generate a sublattice $\zL_1$ of $\zL_2=\bbZ^2$ with index
$\text{det}(A)=\deg(f)$.  Each green line segment $\za_i$ contains
exactly one element of $\zL_1$.  This element of $\zL_1$ is an
endpoint of $\za_i$.  We view $\za_i$ as directed, with this endpoint
its initial endpoint.  The four initial endpoints of $\za_1$, $\za_2$,
$\za_3$, $\za_4$ map bijectively to $\zL_1/2\zL_1$.  Let
$P_1=\zL_1/2\zL_1$.  The set $P_1$ will contain the noncritical points
of $\zG$.  Let $P_2$ be the set of images in $(\zL_2/2\zL_1)/\{\pm
1\}$ of the terminal endpoints of $\za_1$, $\za_2$, $\za_3$, $\za_4$.
The set $P_2$ will be the postcritical set of $\zG$.

Now we define $\zh\co P_1\to P_2$.  Let $x\in P_1$.  There exists a
unique $i\in \{1,2,3,4\}$ such that $x$ is the image in $P_1$ of the
initial endpoint of $\za_i$.  Let $\zh(x)$ be the image in $P_2$ of
the terminal endpoint of $\za_i$.

We define $\zg\co P_1\cup P_2\to P_1$ in this paragraph.  There exist
integers $b_1$ and $b_2$ such that $b=b_1\zl_1+b_2\zl_2$.  For every
$\zl\in \zL_1$, let $\overline{\zl}$ be the image of $\zl$ in
$\zL_1/2\zL_1$.  To define $\zg$, let $x\in P_1\cup P_2$.  Let $(r,s)$
be a representative of $x$ in $\bbZ^2$.  Set
  \begin{equation*}
\zg(x)=(r+b_1)\overline{\zl}_1+(s+b_2)\overline{\zl}_2\in P_1.
  \end{equation*}

Now that we have $\zg$ and $\zh$, we define $\zf\co P_1\cup P_2\to
P_2$ so that $\zf=\zh\circ \zg$.  We can easily construct a dynamic
portrait for $f$ from $\zf$ and $d=\text{det}(A)=\deg(f)$.

Here is a step-by-step algorithm based on the above discussion for
constructing a dynamic portrait for a NET map with presentation data
$(A,b,\za_1,\za_2,\za_3,\za_4)$.
\medskip

\noindent\textsl{Step 1} Let $\zl_1$ and $\zl_2$ be the columns of
$A$, let $\zL_2=\bbZ^2$ and let $\zL_1$ be the sublattice of $\zL_2$
generated by $\zl_1$ and $\zl_2$.  Direct $\za_i$ so that the initial
endpoint of $\za_i$ is contained in $\zL_1$ for $i\in \{1,2,3,4\}$.
Let $\overline{\zl}$ denote the image in $\zL_1/2\zL_1$ of $\zl$ in
$\zL_1$.
\medskip

\noindent\textsl{Step 2} Let $P_1=\zL_1/2\zL_1$ be the set of images
in $(\zL_2/2\zL_1)/\{\pm 1\}$ of the initial endpoints of $\za_1$,
$\za_2$, $\za_3$, $\za_4$, and let $P_2$ be the set of images in
$(\zL_2/2\zL_1)/\{\pm 1\}$ of the terminal endpoints of $\za_1$,
$\za_2$, $\za_3$, $\za_4$.
\medskip

\noindent\textsl{Step 3} Define $\zh\co P_1\to P_2$ so that $\zh$ maps
the image in $P_1$ of the initial endpoint of $\za_i$ to the image in
$P_2$ of the terminal endpoint of $\za_i$ for $i\in \{1,2,3,4\}$.
\medskip

\noindent\textsl{Step 4} Find the integers $b_1$ and $b_2$ such that
$b=b_1\zl_1+b_2\zl_2$.
\medskip

\noindent\textsl{Step 5} For every $x\in P_1\cup P_2$ choose a
representive $(r,s)$ for $x$ in $\bbZ^2$, and compute 
  \begin{equation*}
\zg(x)=(r+b_1)\overline{\zl}_2+(s+b_2)\overline{\zl}_2\in
\zL_1/2\zL_1=P_1.
  \end{equation*}

\noindent\textsl{Step 6} For every $x\in P_2$ construct an edge with
initial vertex $x$ and terminal vertex $\zf(x)=\zh(\zg(x))$.  This
edge is given weight 1 if $x\in P_1$ and weight 2 if $x\in
P_2\setminus P_1$.
\medskip

\noindent\textsl{Step 7} Adjoin enough new edges with weight 2,
distinct initial endpoints not in $P_1\cup P_2$ and terminal endpoints
in $P_2$ so that the incoming degree of every element $x$ of $P_2$
plus the order of $\zf^{-1}(x)\cap (P_1-P_2)$ is $d=\text{det}(A)$.
\medskip

Now we have our dynamic portrait.

\begin{ex}\label{ex:first} We illustrate this construction for the
degree 10 NET map considered in Example 3.1 of \cite{cfpp}.
Figure~\ref{fig:firstprendgm} has a presentation diagram for it.  We
write rows instead of columns to simplify notation.

  \begin{figure}
\centerline{\includegraphics{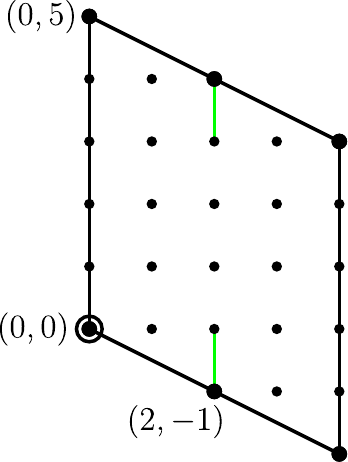}}
\caption{A presentation diagram for Example~\ref{ex:first} }
\label{fig:firstprendgm}
  \end{figure}

We have that $A=\left[\begin{smallmatrix}2 & 0 \\ -1 & 5
\end{smallmatrix}\right]$.  The circled lattice point in
Figure~\ref{fig:firstprendgm} is $(0,0)$, so $b=(0,0)$.  The green
line segments $\za_1$ and $\za_3$ are trivial, consisting of the
points $(0,0)$ and $(0,5)$.  The green line segment $\za_2$ joins
$(2,-1)$ and $(2,0)$.  The green line segment $\za_4$ joins $(2,4)$
and $(2,3)$.
\medskip

\noindent\textsl{Step 1} $\zl_1=(2,-1)$ \quad $\zl_2=(0,5)$

\noindent\textsl{Step 2} 
$P_1=\{\overline{(0,0)},\overline{(2,-1)},\overline{(0,5)},
\overline{(2,4)}\}$ \quad
$P_2=\{\overline{(0,0)},\overline{(2,0)},\overline{(0,5)},
\overline{(2,3)}\}$
\medskip

\noindent\textsl{Step 3} $\zh(\overline{(0,0)})=\overline{(0,0)}$
\quad $\zh(\overline{(2,-1)})=\overline{(2,0)}$
\quad $\zh(\overline{(0,5)})=\overline{(0,5)}$
\quad $\zh(\overline{(2,4)})=\overline{(2,3)}$
\medskip

\noindent\textsl{Step 4} $b_1=0$ \quad $b_2=0$ 
\medskip

\noindent\textsl{Step 5} We compute as follows.
  \begin{alignat*}{5}
\zf(\overline{(0,0)}) &=& \zh(\zg(\overline{(0,0)}))
&=& \zh((0+0)\overline{\zl}_1+(0+0)\overline{\zl}_2) 
&=& \zh(0) &=& \overline{(0,0)}
\\
\zf(\overline{(2,-1)}) &=& \hspace{.3em}\zh(\zg(\overline{(2,-1)}))
&=& \hspace{.3em}\zh((2+0)\overline{\zl}_1+(-1+0)\overline{\zl}_2)
&=&\hspace{.3em}\zh(\overline{\zl}_2) &=& \hspace{.3em}\overline{(0,5)}
\\
\zf(\overline{(0,5)}) &=& \zh(\zg(\overline{(0,5)}))
&=& \zh((0+0)\overline{\zl}_1+(5+0)\overline{\zl}_2) 
&=& \zh(\overline{\zl}_2) &=& \overline{(0,5)}
\\
\zf(\overline{(2,4)}) &=& \zh(\zg(\overline{(2,4)}))
&=& \zh((2+0)\overline{\zl}_1+(4+0)\overline{\zl}_2)
&=& \zh(0) &=& \overline{(0,0)}
\\
\zf(\overline{(2,0)}) &=& \zh(\zg(\overline{(2,0)}))
&=& \zh((2+0)\overline{\zl}_1+(0+0)\overline{\zl}_2)
&=& \zh(0) &=& \overline{(0,0)}
\\
\zf(\overline{(2,3)}) &=& \zh(\zg(\overline{(2,3)}))
&=& \zh((2+0)\overline{\zl}_1+(3+0)\overline{\zl}_2) 
&=& \zh(\overline{\zl}_2) &=& \overline{(0,5)}
  \end{alignat*}

\noindent\textsl{Step 6} We construct a graph as in
Figure~\ref{fig:firstkernel}. 
\medskip

\noindent\textsl{Step 7} We adjoin $16$ critical vertices and
corresponding edges to obtain Figure~\ref{fig:firstfinal}.  This is
essentially Figure 6 in \cite{cfpp}.

  \begin{figure}
\centerline{\includegraphics{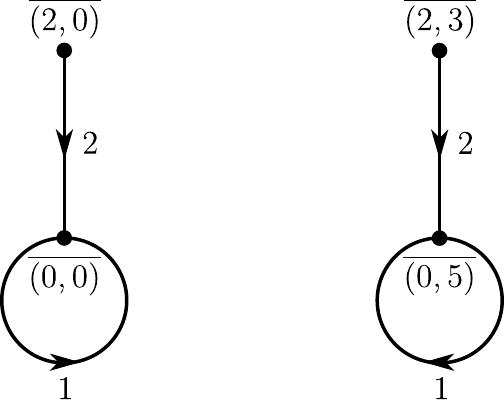}}
\caption{Step 6 for Example~\ref{ex:first}}
\label{fig:firstkernel}
  \end{figure}

  \begin{figure}
\centerline{\includegraphics{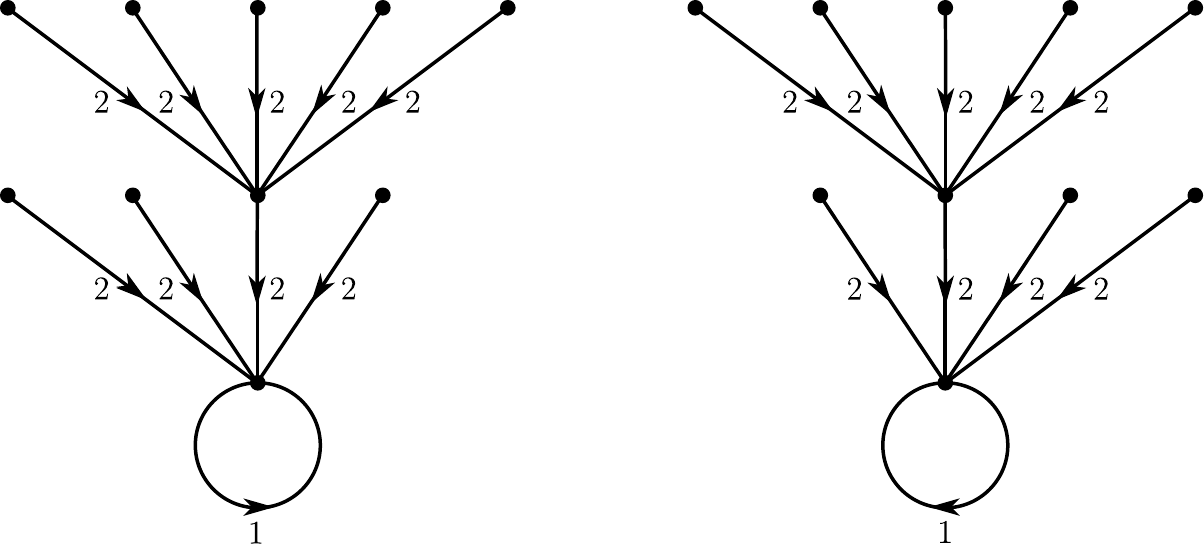}}
\caption{The dynamic portrait for Example~\ref{ex:first}}
\label{fig:firstfinal}
  \end{figure}
\end{ex}

\section{Explicit construction of NET maps from a dynamic
portrait}\label{sec:pictomap}\nosubsections

In this section we begin with a weighted directed graph $\zG$ which
satisfies the obvious conditions for the dynamic portrait of a NET map
and also the exceptional condition.  We present an algorithm which
constructs at least one representative of every Thurston equivalence
class of NET maps with dynamic portrait isomorphic to $\zG$.  This
algorithm is essentially gotten by outlining the proofs of
Lemma~\ref{lemma:ed_and_bd} and the backward implication of
Theorem~\ref{thm:portrait}.  We conclude with an example.
\medskip

\noindent\textsl{Step 1} Determine the degree $d$ of $\zG$.
\medskip

\noindent\textsl{Step 2} Identify the set $P_2$ of postcritical
vertices of $\zG$.  Let $P_1$ be the set of noncritical vertices of
$\zG$ together with more points disjoint from $\zG$ if necessary so
that $|P_1|=4$.  Define a map map $\zf\co P_1\cup P_2\to P_2$ so that
$\zf|_{P_2}$ is the map determined by $\zG$ and $\zf|_{P_1-P_2}$ is
defined in any way so that the incoming degree of every element $x$ of
$P_2$ plus the order of $\zf^{-1}(x)\cap (P_1-P_2)$ is $d$.
\medskip

\noindent\textsl{Step 3} Choose positive integers $m$ and $n$ such
that $n|m$, $mn=d$ and $n$ is even if and only if the restriction of
$\zf$ to $P_1$ is constant.
\medskip

\noindent\textsl{Step 4} Let $\zL_2=\mathbb{Z}^2$ and
$\zL_1'=\left<(m,0),(0,n)\right>$.  Let $\zG'_1$ be the group of
Euclidean isometries of the form $x\mapsto 2\zl\pm x$ for some $\zl\in
\zL'_1$.  Let $F'_1$ be the rectangle in $\mathbb{R}^2$ with corners
$(0,0)$, $(2m,0)$, $(0,n)$ and $(2m,n)$.  Identify $P_1\cup P_2$ with
a subset of $(\zL_2/2\zL'_1)/\{\pm 1\}$ by labeling appropriate 
elements of $\zL_2\cap F'_1$ with the elements of $P_1\cup P_2$.  Then
construct four directed line segments $\za'_1$, $\za'_2$, $\za'_3$,
$\za'_4$ in $F'_1$ whose images in $\mathbb{R}^2/\zG'_1$ are disjoint
such that
\begin{enumerate}
  \item  $P_1=\{\overline{(0,0)},\overline{(m,0)},\overline{(0,n)},
\overline{(m,n)}\}$;
  \item $\zf(x)=\zf(y)$ if and only if $x\equiv y \text{ mod } 2
\zL_2$ for $x,y\in P_1\cup P_2$;
  \item the initial endpoint of $\za'_i$ maps to $P_1$ and the
terminal endpoint of $\za'_i$ maps to $P_2$ for every $i\in
\{1,2,3,4\}$.
\end{enumerate}
Let $\zh\co P_1\to P_2$ be the bijection which takes the image of the
initial endpoint of $\za'_i$ to the image of the terminal endpoint of
$\za'_i$ for every $i\in \{1,2,3,4\}$.
\medskip

\noindent\textsl{Step 5} The map $\zh^{-1}\circ \zf$ determines an
injective function from a subset of $\zL_2/2\zL_2$ to $\zL'_1/2
\zL'_1$.  This injection extends to a bijection $\zq\co
\zL_2/2\zL_2\to \zL'_1/2\zL'_1$.  There are 24 such bijections because
$|\zL_2/2\zL_2|=|\zL'_1/2\zL'_1|=4$.  Letting $\bbF_2$ denote the
field of two elements, $|\text{SL}(2,\bbF_2)|=6$ and $|\bbF_2^2|=4$.
Hence there are $6\times 4=24$ affine isomorphisms between two
2-dimensional vector spaces over $\bbF_2$.  Thus $\zq$ is an affine
isomorphism.  Find $\overline{Q}\in \text{SL}(2,\mathbb{F}_2)$ and
$\overline{b}_0\in \mathbb{F}_2^2$ such that $\zq$ is given by
$x\mapsto \overline{Q}x+\overline{b}_0$ with respect to the bases of
$\zL_2/2\zL_2$ and $\zL'_1/2\zL'_1$ determined by the standard basis
elements $e_1$, $e_2$ of $\zL_2$ and $me_1$, $ne_2$.
\medskip

\noindent\textsl{Step 6} Choose $Q\in \text{SL}(2,\mathbb{Z})$ and
$b_0\in \{0,e_1,e_2,e_1+e_2\}$ such that $\overline{Q}$ and
$\overline{b}_0$ are the reductions of $Q$ and $b_0$ modulo 2.
\medskip

\noindent\textsl{Step 7} Set $A=Q\left[\begin{smallmatrix}m & 0 \\ 0 &
n\end{smallmatrix}\right]$ and $b=Ab_0$, where we view $b_0$ as a
column vector.
\medskip

\noindent\textsl{Step 8} Define $F_1$, $\za_1$, $\za_2$, $\za_3$,
$\za_4$ to be the images of $F'_1$, $\za'_1$, $\za'_2$, $\za'_3$,
$\za'_4$ under the map $x\mapsto Qx$.
\medskip

The result is NET map presentation data
$(A,b,\za_1,\za_2,\za_3,\za_4)$  for a NET map whose dynamic portrait
is isomorphic to $\zG$.

\begin{ex}\label{ex:degfour} We illustrate this construction for the
dynamic portrait in Figure~\ref{fig:degfourgraph}.
\medskip

  \begin{figure}
\centerline{\includegraphics{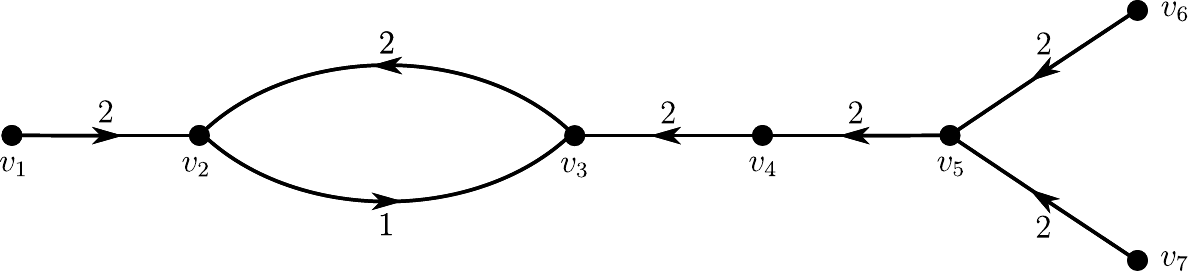}} 
\caption{The dynamic portrait for Example~\ref{ex:degfour}}
\label{fig:degfourgraph}
  \end{figure}

\noindent\textsl{Step 1} $d=4$
\medskip

\noindent\textsl{Step 2} 
$P_1=\{v_2,v_8,v_9,v_{10}\}$, $P_2=\{v_2,v_3,v_4,v_5\}$ \quad
Three additional vertices $v_8$, $v_9$, $v_{10}$ are needed.
  \begin{equation*}
\zf:\quad v_1,v_3\mapsto v_2\quad v_2,v_4,v_8\mapsto v_3\quad
v_5,v_9,v_{10}\mapsto v_4\quad v_6,v_7\mapsto v_5
  \end{equation*}

\noindent\textsl{Step 3} $m=4$, $n=1$
\medskip

\noindent\textsl{Step 4} We label appropriate elements of $\zL_2\cap
F'_1$ with the elements of $P_1\cup P_2$ as indicated in
Figure~\ref{fig:degfourfundom}.  The critical points $v_1$ and $v_3$
are congruent modulo 2.  The critical point $v_4$ and the noncritical
points $v_2$ and $v_8$ are congruent modulo 2.  The critical point
$v_5$ and the noncritical points $v_9$ and $v_{10}$ are congruent
modulo 2.  The critical points $v_6$ and $v_7$ are congruent modulo 2.
The nontrivial green arcs are drawn in Figure~\ref{fig:degfourfundom}.
  \begin{equation*}
\zh\co\quad v_2\mapsto v_2\qquad v_8\mapsto v_3\qquad v_9\mapsto v_4\qquad
v_{10}\mapsto v_5
  \end{equation*} 

  \begin{figure}
\centerline{\includegraphics{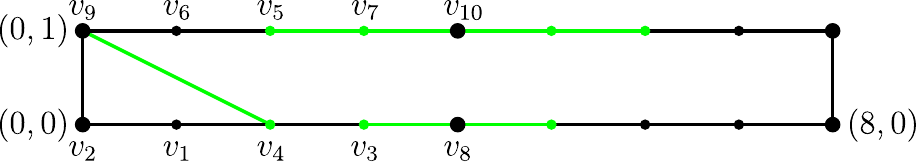}} 
\caption{The preliminary fundamental domain for
Example~\ref{ex:degfour}} \label{fig:degfourfundom}
  \end{figure}

\noindent\textsl{Step 5} We evaluate $\zh^{-1}\circ \zf$ at three
points.
  \begin{align*}
 & \overline{(0,0)}=v_2\overset{\zf}{\longmapsto}v_3
\overset{\zh^{-1}}{\longmapsto} v_8=\overline{(4,0)}=1\cdot
\overline{me_1}+0\cdot \overline{ne_2}\\
 & \overline{(1,0)}= v_1\overset{\zf}{\longmapsto}v_2
\overset{\zh^{-1}}{\longmapsto}v_2=\overline{(0,0)}=0\cdot
\overline{me_1}+0\cdot \overline{ne_2}\\
 & \overline{(0,1)}=v_9\overset{\zf}{\longmapsto}v_4
\overset{\zh^{-1}}{\longmapsto}v_9=\overline{(0,1)}=0\cdot
\overline{me_1}+1\cdot \overline{ne_2}
\end{align*}
Let $\overline{Q}=\left[\begin{matrix}w & x \\ y & z
\end{matrix}\right]\in \text{SL}(2,\mathbb{F}_2)$.
  \begin{equation*}
\begin{aligned}
\therefore\quad  & \left[\begin{matrix}w & x \\ y & z
\end{matrix}\right]
\left[\begin{matrix}0\\0\end{matrix}\right]+\overline{b}_0=
\left[\begin{matrix}1\\ 0\end{matrix}\right]\Longrightarrow 
\overline{b}_0=\left[\begin{matrix}1\\ 0\end{matrix}\right]\\
 & \left[\begin{matrix}w & x \\ y & z\end{matrix}\right]
\left[\begin{matrix}1\\0\end{matrix}\right]+\overline{b}_0=
\left[\begin{matrix}0\\ 0\end{matrix}\right]\Longrightarrow 
\left[\begin{matrix}w\\ y\end{matrix}\right]+
\left[\begin{matrix}1\\ 0\end{matrix}\right]=
\left[\begin{matrix}0\\ 0\end{matrix}\right]\Longrightarrow 
w=1, y=0\\
 & \left[\begin{matrix}w & x \\ y & z\end{matrix}\right]
\left[\begin{matrix}0\\1\end{matrix}\right]+\overline{b}_0=
\left[\begin{matrix}0\\ 1\end{matrix}\right]\Longrightarrow 
\left[\begin{matrix}x\\ z\end{matrix}\right]+
\left[\begin{matrix}1\\ 0\end{matrix}\right]=
\left[\begin{matrix}0\\ 1\end{matrix}\right]\Longrightarrow 
x=1, z=1
\end{aligned}
  \end{equation*}

\noindent\textsl{Step 6} One choice for $Q$ and $b_0$: 
$Q=\left[\begin{matrix}1 & 1 \\ 0 & 1\end{matrix}\right]$, 
$b_0=\left[\begin{matrix}1\\0\end{matrix}\right]$
\medskip

\noindent\textsl{Step 7}
$A=Q\left[\begin{matrix}m & 0
\\ 0 &n \end{matrix}\right]= \left[\begin{matrix}1 & 1 \\ 0 &
1\end{matrix}\right] \left[\begin{matrix}4 & 0 \\ 0 &
1\end{matrix}\right] =\left[\begin{matrix}4 & 1 \\ 0 & 1
\end{matrix}\right]$, $b=Ab_0=\left[\begin{matrix}4\\
0\end{matrix}\right]$
\medskip

\noindent\textsl{Step 8} Figure~\ref{fig:degfourprendgm} has a
presentation diagram for a NET map whose dynamic portrait is
isomorphic to the initial portrait in Figure~\ref{fig:degfourgraph}.

  \begin{figure}
\centerline{\includegraphics{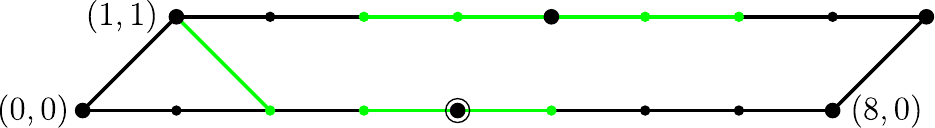}} 
\caption{A presentation diagram for Example~\ref{ex:degfour}} 
\label{fig:degfourprendgm}
  \end{figure}

\end{ex}

\end{document}